\newcommand{\overbar}[1]{\mkern 1.5mu\overline{\mkern-1.5mu#1\mkern-1.5mu}\mkern 1.5mu}
\def\eod{\vrule height 6pt width 5pt depth 0pt}
\newenvironment{proof}{\noindent {\bf Proof:} \hspace{.2em}}
                      {\hspace*{\fill}{\eod}}
\newtheorem{theorem}{Theorem}
\newtheorem{lemma}[theorem]{Lemma}
\newtheorem{definition}[theorem]{Definition}
\newtheorem{corollary}[theorem]{Corollary}
\newtheorem{example}[theorem]{Example}
\newtheorem{remark}[theorem]{Remark}
\newcommand{\ED}{ \mathsf{ED}}
\newcommand{\sD}{ \mathcal{ D}}
\newcommand{\sL}{\mathcal{ L}}
\newcommand{\RR}{ \mathbbm{R}}
\newcommand{\comment}[1]{}
\newcommand{\SSS}{\mathfrak{S}}
\newcommand{\CC}{ \mathbb{C}}
\newcommand{\GTS}{\mathsf{GTS}}
\newcommand{\Tr}{ \mathsf{Trace}}
\newcommand{\obr}[1]{\overbar{#1}}
\newcommand{\sF}{ \mathcal{ F}}
\begin{document}

\title{Eigenvalue monotonicity of $q$-Laplacians of trees along a poset} 

\author{Mukesh Kumar Nagar\\
Department of Mathematics\\
Indian Institute of Technology, Bombay\\
Mumbai 400 076, India.\\
email: mukesh.kr.nagar@gmail.com
}

\maketitle

\begin{abstract}
Let $T$ be a tree on $n$ vertices with $q$-Laplacian $\sL_{T}^{q}$.
Let $\GTS_n$ be the generalized 
tree shift poset on the set of unlabelled trees with $n$ vertices. 
We prove that for all $q \in \RR$, going up on $\GTS_n$ has the 
following effect: 
the spectral radius  and the second smallest eigenvalue of 
$\sL_{T}^{q}$ increase while the smallest eigenvalue of $\sL_{T}^{q}$ 
decreases. These generalize known results for eigenvalues of the 
Laplacian.  As a corollary, we obtain consequences about the eigenvalues of
$q,t$-Laplacians and exponential distance matrices of trees.
\end{abstract}




\section{Introduction} 

In \cite{kelmans}, 
Kelmans defined an operation on graphs 
 called the Kelmans' transformation. 
This transformation has nice properties: 
it increases the spectral radius of adjacency matrix (see  Csikv{\'a}ri 
\cite{csikvari-on-conjecture})
and decreases  the number of spanning trees 
(see Satyanarayana,  Schoppman and Suffel 
\cite{satyanarayana-schoppman-suffel}). 

Motivated by Kelmans' transformation, 
Csikv{\'a}ri \cite{csikvari-poset1,csikvari-poset2} defined a  poset  
on the set of unlabelled trees with $n$ vertices denoted 
here as $\GTS_n$ (see Subsection \ref{subsec:GTS} for the definition). 
Among other results, he proved the following.

\begin{theorem}[Csikv{\'a}ri]
\label{thm:csikvari_main}
Going up on $\GTS_n$ increases both the  algebraic connectivity  
(the second smallest eigenvalue) and 
the largest eigenvalue of the Laplacian matrix of trees.
\end{theorem}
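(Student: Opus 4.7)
My plan is to prove both statements by variational arguments on the Rayleigh quotient. Label the Laplacian eigenvalues $0 = \lambda_1(\sL_T) \le \lambda_2(\sL_T) \le \cdots \le \lambda_n(\sL_T)$ and recall the characterizations
$$\lambda_n(\sL_T) = \max_{x \ne 0} \frac{x^{\top}\sL_T x}{x^{\top} x}, \qquad \lambda_2(\sL_T) = \min_{\substack{x \ne 0 \\ x \perp \bone}} \frac{x^{\top}\sL_T x}{x^{\top} x},$$
together with the identity $x^{\top}\sL_T x = \sum_{ij \in E(T)} (x_i - x_j)^2$. Let $T \lessdot T'$ be a cover relation in $\GTS_n$, so that $T'$ is obtained from $T$ by detaching a prescribed collection of subtrees from one endpoint of a distinguished edge and reattaching them to the other endpoint.

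For the spectral radius, I would take an eigenvector $x$ of $\sL_T$ realizing $\lambda_n(\sL_T)$ and define a test vector $x'$ on $V(T')$ by a length-preserving permutation: assign the $x$-values of the affected vertices to their counterparts in $T'$ so that the two endpoints of the distinguished edge receive values making the squared difference across every reattached edge at least as large as before. Since edges outside the shifted region contribute identically and the pairings within the shifted subtrees themselves are preserved, a rearrangement inequality on the endpoint values yields $x'^{\top}\sL_{T'} x' \ge x^{\top}\sL_T x$, and since $\|x'\| = \|x\|$ we obtain $\lambda_n(\sL_{T'}) \ge \lambda_n(\sL_T)$.

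For the algebraic connectivity, the direction of the inequality forces one to start on $T'$. I would take a Fiedler vector $y$ of $\sL_{T'}$ and apply the \emph{inverse} permutation to produce a test vector $y'$ on $V(T)$. Since permuting coordinates preserves their sum, $y' \perp \bone$ automatically. The task is then to show $y'^{\top}\sL_T y' \le y^{\top}\sL_{T'} y$. To control the signs, I would invoke Fiedler's structure theorem for trees, which forces $y$ to be either monotone along the path through a characteristic edge or to vanish at a unique characteristic vertex with monotonicity on each side; this sign pattern, combined with the local nature of the shift, should make the inverse rearrangement decrease the quadratic form.

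The main obstacle is this last step. Unlike the spectral-radius case, where any rearrangement of endpoint values in the right direction suffices, the algebraic-connectivity case requires a case analysis keyed to the position of the characteristic vertex or edge of $T'$ relative to the shifted branches and to the signs of $y$ on those branches. Handling the case when the characteristic vertex itself lies in the shifted region, and verifying that the inverse shift cannot create new pairs of matching signs that would inflate the quadratic form, will take the most care; this is presumably also the step whose $q$-deformation drives the general theorem announced in the abstract.
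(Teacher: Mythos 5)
Your variational plan is a genuinely different route from the paper's: here Theorem \ref{thm:csikvari_main} is obtained as the $q=1$ specialization of Theorem \ref{thm:main_result}, whose proof never touches Rayleigh quotients but instead factors $f^{\sL_{T_1}^{q}}(q,x)-f^{\sL_{T_2}^{q}}(q,x)$ into auxiliary polynomials of the pieces $P_k$, $H_1$, $H_2$ (Theorem \ref{thm:result_by_gen_lemma}), locates their roots by interlacing (Lemmas \ref{lem1:F_T_q|<1} and \ref{lem2:F_T_q|>1}), and reads the eigenvalue inequalities off the sign of the difference polynomial outside $[\lambda_{a},\lambda_{\max}]$. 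That said, your proposal as written has two genuine gaps.

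For the spectral radius, ``a rearrangement inequality on the endpoint values'' is not enough for the form $\sum_{ij\in E}(x_i-x_j)^2$: the change on a moved edge is $(x_1-x_j)^2-(x_k-x_j)^2=(x_1-x_k)(x_1+x_k-2x_j)$, whose sign depends on $x_j$ and not only on the two endpoints. Worse, the top Laplacian eigenvector of a tree alternates sign with the bipartition, and when $P_k$ has an even number of vertices the moved neighbours $j$ of $k$ lie in the same bipartition class as vertex $1$, so the naive permutation sends $(|x_k|+|x_j|)^2$ to $(|x_1|-|x_j|)^2$ and can strictly \emph{decrease} the form. You must either flip the sign of $x$ on each moved branch, or (cleaner) pass to the signless Laplacian $D+A$, which is similar to $L$ on a tree and has a nonnegative top eigenvector, and then justify the reduction to $x_1\ge x_k$ by the symmetry of the cover relation in the two endpoints of $P_k$. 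For the algebraic connectivity the situation is worse: the inequality $y'^{\top}L_{T_1}y'\le y^{\top}L_{T_2}y$ \emph{is} the content of the claim, and you explicitly defer the case analysis on the characteristic vertex or edge and on the signs of the Fiedler vector over the shifted branches that would establish it. As it stands that half of the theorem is unproved, and it is precisely the difficulty that the characteristic-polynomial approach is designed to bypass: there the $\lambda_{a}$ statement follows from sign and root-location arguments (Lemma \ref{lem:sgn_D_T2^T1}, Lemma \ref{lem:min_T1<2nd_min_T2}, Theorem \ref{thm:2nd_ev_main}) rather than from exhibiting a good test vector.
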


For a graph $G$, its Laplacian matrix $L_G$ has a $q$-analogue, 
denoted by $\sL_{G}^q$ called the $q$-Laplacian of $G$.  
The entries of $\sL_{G}^q$ are 
polynomials in a real variable $q$.  
The matrix  $\sL_G^q$ is defined as $\sL_G^q=I+q^2(D-I)-qA$, 
where $D$ is the diagonal matrix with degrees on the diagonal 
and $A$ is the adjacency matrix of $G$.   
Clearly when $q=1$,  $\sL_G^q = L_G$.  
The matrix $\sL_G^q$ has connections with 
the Ihara-Selberg zeta function of a graph
(see Bass \cite{bass} and Foata and Zeilberger \cite{foata-zeilberger-bass-trams}).  
When $G$ is a tree $T$,  $\sL_T^q$ has connections with distance matrix  (see 
Bapat, Lal and Pati \cite{bapat-lal-pati}).

In this paper, 
we prove the following more general result about the eigenvalues of  $\sL_T^q$.   
For a fixed  $q\in \RR$, 
let $\lambda_{\max}(\sL_T^{q})$,  $\lambda_{\min}(\sL_{T}^{q})$ and  $\lambda_{a}(\sL_{T}^{q})$ 
be the largest, the smallest and the second smallest eigenvalues of $\sL_{T}^{q}$ respectively.  

\begin{theorem}
\label{thm:main_result}
Let $T_1$ and $T_2$ be two trees with $n$ vertices such that 
$T_1 \leq_{\GTS_n} T_2$ on $\GTS_n$. Let $\sL_{T_1}^{q}$ and 
$\sL_{T_2}^{q}$ be the $q$-Laplacians of $T_1$ and $T_2$ respectively. 
Then,  for all $q\in \RR$, we have    

$$\lambda_{\max}(\sL_{T_1}^{q}) \leq \lambda_{\max}(\sL_{T_2}^{q}),
 \lambda_{a}(\sL_{T_1}^{q}) 
 \leq \lambda_{a}(\sL_{T_2}^{q})  \mbox{ and }\lambda_{\min}(\sL_{T_1}^{q}) 
\geq \lambda_{\min}(\sL_{T_2}^{q}).$$ 
\end{theorem}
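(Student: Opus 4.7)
Since $\leq_{\GTS_n}$ is the reflexive transitive closure of its cover relation, it suffices to prove the three inequalities when $T_2$ covers $T_1$ in $\GTS_n$. Such a cover is described by an edge $\{u,v\}\in E(T_1)$ and a set $R\subseteq N_{T_1}(v)\setminus\{u\}$: $T_2$ is obtained from $T_1$ by replacing every edge $\{v,r\}$, $r\in R$, with $\{u,r\}$, so $T_1$ and $T_2$ share the same vertex set. Grouping the quadratic form of $\sL_T^q=I+q^2(D-I)-qA$ by edges yields
\begin{equation*}
x^\top \sL_T^q x \;=\; (1-q^2)\|x\|^2 + \sum_{\{i,j\}\in E(T)}\bigl(q^2 x_i^2 + q^2 x_j^2 - 2q x_i x_j\bigr),
\end{equation*}
so the effect of one cover on any $x\in\RR^n$ is the explicit scalar
\begin{equation*}
x^\top \sL_{T_2}^q x - x^\top \sL_{T_1}^q x \;=\; (x_u-x_v)\Bigl[\,q^2|R|(x_u+x_v) - 2q\sum_{r\in R} x_r\Bigr]. \qquad (\ast)
\end{equation*}

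\textbf{The extremal eigenvalues.} For $\lambda_{\max}$, pick a unit eigenvector $x$ of $\sL_{T_1}^q$ realizing $\lambda_{\max}(\sL_{T_1}^q)$, and seek a unit test vector $y$ on the same vertex set with $y^\top \sL_{T_2}^q y \geq x^\top \sL_{T_1}^q x$; Rayleigh--Ritz will then give $\lambda_{\max}(\sL_{T_2}^q)\geq \lambda_{\max}(\sL_{T_1}^q)$. By $(\ast)$ this reduces to engineering $(y_u-y_v)\bigl[q^2|R|(y_u+y_v) - 2q\sum_{r\in R} y_r\bigr]\geq 0$ without decreasing the $T_1$-Rayleigh quotient at $y$. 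Following Csikv\'ari's approach to the $q=1$ case, I would obtain $y$ from $x$ by a combination of permuting values (swapping labels of subtrees hanging at $u$ versus $v$) and flipping signs on appropriately chosen branches, where the specific combination is selected from the signs of $x_u,x_v,\sum_{r\in R} x_r$ and $\sgn(q)$ by a finite case analysis. The $\lambda_{\min}$ inequality is dual: start from a $\lambda_{\min}$-eigenvector and arrange $(\ast)\leq 0$.

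\textbf{The algebraic connectivity, and the main obstacle.} For $\lambda_a$ I apply Courant--Fischer at dimension two: $\lambda_a(\sL_T^q)=\min_{\dim S=2}\max_{0\ne v\in S}v^\top \sL_T^q v/\|v\|^2$. Let $S_1\subset\RR^n$ be the span of eigenvectors of $\sL_{T_1}^q$ for its two smallest eigenvalues, so $v^\top\sL_{T_1}^q v \leq \lambda_a(\sL_{T_1}^q)\|v\|^2$ for all $v\in S_1$. Fix a signed permutation $\phi\colon\RR^n\to\RR^n$ determined by the combinatorics of the cover, not by any particular vector, and set $S_2:=\phi(S_1)$; the plan is to show that the symmetric correction $(\ast)$, after the re-labeling $\phi$, is a nonnegative form on $S_1$, so that $\max_{0\ne w\in S_2}w^\top\sL_{T_2}^q w/\|w\|^2 \geq \lambda_a(\sL_{T_1}^q)$, whence $\lambda_a(\sL_{T_2}^q) \geq \lambda_a(\sL_{T_1}^q)$. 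The main difficulty lies precisely here: the per-vector sign adjustment that handles $\lambda_{\max}$ cannot be tuned independently to two linearly independent vectors, and for generic $q$ there is no analogue of the canonical kernel direction $\bone$ that in the Laplacian case ($q=1$) provides the reduction to the complement of $\bone$. I expect the heart of the proof to be a single, structurally-determined signed permutation whose induced sign in $(\ast)$ persists across every vector of the 2-plane $S_1$, leveraging the small rank of $\sL_{T_2}^q - \sL_{T_1}^q$ (supported on $\{u,v\}\cup R$).
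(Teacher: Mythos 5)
There is a genuine gap: what you have written is a plan, not a proof, and the two places where you defer the work are exactly the places where the argument is hardest. For $\lambda_{\max}$ and $\lambda_{\min}$ you reduce to producing a unit vector $y$ with the right sign in $(\ast)$ "by a finite case analysis" of permutations and sign flips, but you never exhibit the construction, and it is not routine: unlike the adjacency spectral radius (where a Perron vector with all entries positive makes the sign engineering trivial), an extremal eigenvector of $\sL_{T_1}^q$ has mixed signs, and flipping signs on a branch changes the $T_1$-Rayleigh quotient unless the branch is separated from the rest by a single edge on which the cross term can be controlled. For $\lambda_a$ the obstruction you yourself identify is fatal to the stated plan: $(\ast)$ is the product of the two linear forms $x_u-x_v$ and $q^2|R|(x_u+x_v)-2q\sum_{r\in R}x_r$, hence an indefinite form of signature $(1,1)$, and it is nonnegative on a $2$-plane only if the two forms restrict to nonnegatively proportional functionals there. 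A signed permutation $\phi$ chosen from the combinatorics of the cover alone cannot be expected to force this for the eigenplane $S_1$, and no mechanism is offered. There is also a smaller error in the reduction: a cover of $\GTS_n$ is indexed by a path $P_k$ from $1$ to $k$ whose internal vertices have degree $2$ (with \emph{all} neighbours of $k$ except $k-1$ moved), not by an edge $\{u,v\}$ and an arbitrary subset $R\subseteq N(v)\setminus\{u\}$; your identity $(\ast)$ survives with $u=1$, $v=k$ non-adjacent, but the case analysis you propose would have to account for the path $P_k$ and the two components $H_1,H_2$, which your setup does not see.

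For contrast, the paper avoids test vectors entirely. It applies Csikv\'ari's general lemma to the bivariate characteristic polynomial to get the factorization $f^{\sL_{T_1}^{q}}(q,x)-f^{\sL_{T_2}^{q}}(q,x)=-\frac{1}{q^2x}\,\sF_{P_k}^{1}(q,x)\,\sF_{H_1}^{1}(q,x)\,\sF_{H_2}^{1}(q,x)$, shows via Cauchy interlacing of $\sL_T^q$ and $\sL_T^q|v$ that every nonzero root of each factor $\sF_{T}^{v}(q,x)$ lies in $[\lambda_a(\sL_T^q),\lambda_{\max}(\sL_T^q)]$ of the corresponding subtree, and then pins down the sign of the difference polynomial outside the analogous window for $T_1,T_2$ (using the parity bookkeeping of $|P_k|+|H_1|+|H_2|=n+2$). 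Each of the three eigenvalue inequalities then follows from a short intermediate-value/contradiction argument. If you want to salvage a Rayleigh-quotient route, you would at minimum need to carry out the $\lambda_{\max}$ construction explicitly (e.g.\ via a Perron vector of the similar nonnegative matrix $I+q^2(D-I)+|q|A$) and find a genuinely new idea for $\lambda_a$; as it stands, none of the three inequalities is actually proved.
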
 

Thus for all $q \in \RR$, three eigenvalues of $\sL_T^q$ exhibit monotonicity when 
we go up on $\GTS_n$. 
It is easy to see that 
Theorem \ref{thm:main_result} 
gives us Theorem \ref{thm:csikvari_main} by setting
$q=1$.
Theorem \ref{thm:main_result} gives us one extra result which
is trivial when $q=1$, as it is well known that the smallest 
eigenvalue of the Laplacian $L_G$ of a graph $G$ is zero. 
Thus it is constant on $\GTS_n$.  

Note that when $q=0$,  $\sL_{T}^{q}$ is the identity matrix for all trees $T$. 
In this case, all the eigenvalues of $\sL_{T}^{q}$ 
on $\GTS_n$ are constant and  
hence Theorem \ref{thm:main_result} is trivially  true. 
Thus in this work, we will assume $q\neq 0$. 

The Laplacian  $L_G$  has a bivariate analogue 
denoted by $\sL_{G}^{q,t}$ called the $q,t$-Laplacian of $G$ 
(see Section \ref{sec:ev_L^qt} for the definition). 
 $\sL_G^{q,t}$ was defined by 
Bapat and Sivasubramanian in \cite{bapat-siva-prod_dist_matrix} 
to get a bivariate version of the  Ihara-Selberg zeta function of $G$.
When the graph $G$ is a tree $T$, 
$\sL_{T}^{q,t}$ has connections with bivariate versions of distance matrices 
(see \cite{bapat-siva-prod_dist_matrix}). 
Here, both $q$ and $t$ are variables 
and we will let them take both real and complex values. 
Our results have implication for the eigenvalues of 
$\sL_{T}^{q,t}$ for some values of $q,t\in \CC$.

This paper is organized as follows:  
Section \ref{sec:prelim} describes some preliminaries on the poset $\GTS_n$ and exponential distance matrix $\ED_T$ of a tree $T$.  
In Section \ref{sec:gen_lemma}, 
we extend the general lemma proved by Csikv{\'a}ri \cite{csikvari-poset2} 
to the characteristic polynomial of $\sL_T^q$. 
In Section \ref{sec:interlacing}, we prove an interlacing  
results about the eigenvalues of $\sL_T^q$ for all $q \in \RR$. 
In Section \ref{sec:F_T}, inspired by Csikv{\'a}ri \cite{csikvari-poset2}, 
we define and prove some 
properties of an auxiliary bivariate  
polynomial which  in Section \ref{sec:eigenvalue_q_Lap} is used to prove 
Theorem \ref{thm:main_result}.
Theorem \ref{thm:main_result} 
can be used  to 
obtain upper bounds on the largest and 
the second smallest  eigenvalues of $\sL_T^q$ for all $q\in \RR$, 
see Corollaries \ref{cor:max_ev_bound_L_Tq} and \ref{cor:2nd_smal_ev_bound} respectively.
Theorem \ref{thm:main_result} also has consequences for eigenvalues of the $q,t$-Laplacian
and the exponential distance matrices of trees.
These are  obtained in Sections 
\ref{sec:ev_L^qt} and \ref{sec:expon_dist_mat} respectively.

\section{Preliminaries}
\label{sec:prelim}

In the first part we give some preliminaries on the poset $\GTS_n$ 
and in the second subsection we cover some preliminary results of the 
$q$-Laplacian matrix $\sL_T^q$ and of the  
exponential distance matrix $\ED_T$ of a tree $T$.

\subsection{The Poset $\GTS_n$}
\label{subsec:GTS}

We recall the definition of the generalized tree shift poset $\GTS_n$ 
defined by Csikv{\'a}ri \cite{csikvari-poset1}. 
Let  $P_n$ and $S_n$ denote  
the path tree and the star tree on $n$ vertices respectively.  

\begin{definition}
\label{def:GTS}
Let $T_1$ be a tree on $n$ vertices. 
Let   $P_{k}$ be a path between two vertices in $T_1$, 
say  $1$ and $k$  such that 
each internal vertex (if it exists) of $P_{k}$ has degree 2. 
Let $k-1$ be the neighbour of $k$ on $P_{k}$. 
Construct a new tree $T_2$ from $T_1$ 
by moving all the neighbours of  $k$ 
other than $k-1$ to the vertex $1$. 
This operation is called a generalized tree shift. 
This is illustrated in Figure \ref{fig:gts_example_lastchap}. 
\end{definition}

\begin{figure}[h]
\centerline{\includegraphics[scale=0.6]{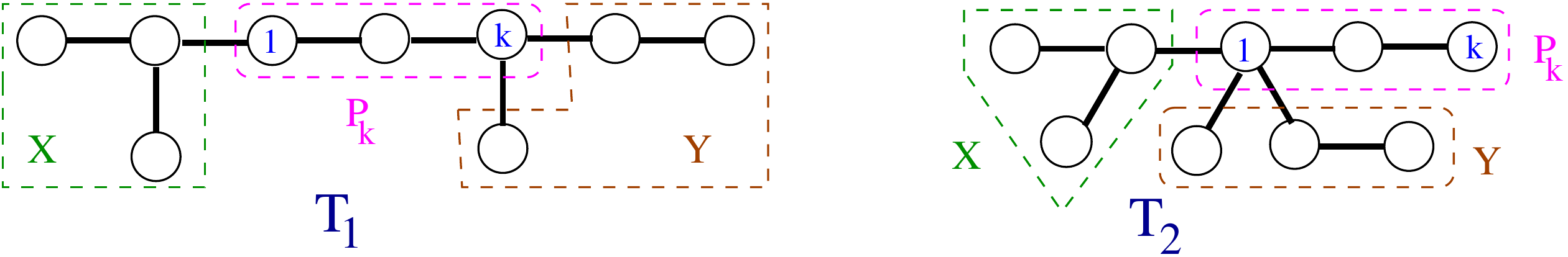}}
\caption{Example of $T_1 \leq_{\GTS_n} T_2$.}
\label{fig:gts_example_lastchap}
\end{figure}
The generalized tree shift operation gives us a partial order  
on the set of unlabelled trees with $n$ vertices, denoted by ``$\leq_{\GTS_n}$''.
If $T_1\leq_{\GTS_n} T_2$ and there is no tree $T$ with $T\neq T_1, T_2$ 
such that $T_1 \leq_{\GTS_n} T \leq_{\GTS_n} T_2$, 
then we say $T_2$ covers $T_1$ in $\GTS_n$. 
If either $1$ or $k$ is a leaf vertex in $T_1$  
then it is simple to check that $T_2$ is isomorphic to $T_1$. 
If neither vertex $1$ nor vertex $k$ is a leaf in $T_1$  
then $T_2$ covers $T_1$. In this case,  the number of leaf vertices in     
$T_2$ is one more than the number of leaf vertices in $T_1$. 
Conversely, if $T_2$ covers $T_1$ then there exists vertices $1,k$ 
which witnesses the covering relation. 
We will use the vertices $1,k$ only for this purpose in this paper. 
We refer the reader to Csikv{\'a}ri \cite{csikvari-poset2} for Hasse diagram of $\GTS_6$.  
Csikv{\'a}ri \cite[Theorem 2.4 and Corollary 2.5]{csikvari-poset1} proved the following result.
  
\begin{lemma}[Csikv{\'a}ri]
\label{lem:csikvari_min_max}
Let $T$ be a tree with $n$ vertices different from $P_n$. 
Then, there exists $T'$ such that    $T' \leq_{\GTS_n} T$. 
Let $T$ be a tree with $n$ vertices different from $S_n$. 
Then, there exists $T''$ such that    $T \leq_{\GTS_n} T''$.
Moreover, $P_n$ and $S_n$ are the only  
minimal and the maximal elements of  $\GTS_n$ respectively. 
\end{lemma}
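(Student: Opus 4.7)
The statement packages two existence claims (a strict predecessor for any $T\ne P_n$, a strict successor for any $T\ne S_n$) together with a \emph{moreover} clause pinpointing the unique extremal elements. My plan is to prove the two existence claims by direct construction and then deduce extremality from the arithmetic observation already recorded in the excerpt: a nontrivial GTS increases the number of leaves by exactly one, and only nontrivial GTSs give covers in $\GTS_n$.

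For the first claim I would reverse-engineer a GTS. Since $T\ne P_n$, the tree $T$ contains a vertex of degree at least $3$. Pick any leaf $\ell$ and walk inward from $\ell$ along the unique path until hitting the first branch vertex $v=v_0,v_1,\ldots,v_r=\ell$; by the choice of $v$, each interior $v_i$ has degree exactly $2$ in $T$. Because $\deg_T(v)\ge 3$, the off-path neighbors of $v$ can be split into two nonempty sets $A$ and $B$. Define $T'$ to be the tree obtained from $T$ by detaching the vertices of $B$ (together with their subtrees) from $v$ and reattaching them to $\ell$. One checks that $T'$ is a tree and that the GTS applied to $T'$ with endpoints $1=v$, $k=\ell$ reproduces $T$; since both $v$ and $\ell$ are non-leaves of $T'$, this GTS is a cover, so $T'<_{\GTS_n}T$.

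For the second claim, $T\ne S_n$ forces $T$ to have at least two non-leaf vertices (otherwise the unique non-leaf would be adjacent to every other vertex, making $T=S_n$). Among all pairs of non-leaves pick $u,v$ at minimum distance in $T$. Any interior vertex of the path from $u$ to $v$ is non-pendant and so has degree $\ge 2$; if it had degree $\ge 3$ it would itself be a non-leaf strictly closer to $u$ than $v$ is, contradicting the choice of $u,v$. Hence the interior vertices have degree exactly $2$, and the GTS applied to $T$ with $1=u$, $k=v$ yields a tree $T''$ covering $T$. The \emph{moreover} clause is then immediate: only $P_n$ can be minimal and only $S_n$ maximal by the two constructions, and conversely $P_n$ has the minimum possible two leaves for a tree on $n\ge 2$ vertices while $S_n$ has the maximum $n-1$, so neither can be the upper (resp.\ lower) end of a strict cover.

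The step requiring care is the first construction: one needs to verify that $T'$ is a tree, that the interior vertices $v_1,\ldots,v_{r-1}$ still have degree exactly $2$ in $T'$ (so the path structure demanded by the GTS definition is preserved), and that the resulting GTS is a genuine cover rather than the trivial leaf-endpoint case. All three follow from confining the edits to edges incident to $v$ and $\ell$ and from the two-nonempty-parts split, which the hypothesis $\deg_T(v)\ge 3$ is exactly what makes available.
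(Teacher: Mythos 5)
The paper offers no proof of Lemma \ref{lem:csikvari_min_max}; it is imported from Csikv{\'a}ri \cite{csikvari-poset1} with only a citation, so there is nothing in-text to compare against. Your self-contained argument is correct and is the natural one: for $T\neq P_n$ you reverse-engineer a generalized tree shift by walking from a leaf $\ell$ to the first branch vertex $v$ and splitting the off-path neighbours of $v$ into two nonempty parts (possible precisely because $\deg_T(v)\geq 3$), so that the shift from $T'$ with $1=v$, $k=\ell$ recreates $T$ and both endpoints are non-leaves of $T'$; for $T\neq S_n$ you shift between two non-leaves at minimum distance; and the extremality of $P_n$ and $S_n$ follows from the fact recorded in Subsection \ref{subsec:GTS} that every covering shift increases the leaf count by exactly one, combined with the bounds $2$ and $n-1$ on the number of leaves of a tree. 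One small imprecision in the second construction: minimality of the distance between the chosen non-leaves $u,v$ already forces them to be \emph{adjacent}, because any interior vertex of the $u$--$v$ path has degree at least $2$ and is therefore itself a non-leaf strictly closer to $u$; excluding only the case ``degree $\geq 3$'' is not enough to justify the sentence ``hence the interior vertices have degree exactly $2$,'' which should instead read ``hence there are no interior vertices.'' This does not affect correctness, since the path condition of Definition \ref{def:GTS} is then satisfied vacuously and the shift with $1=u$, $k=v$ still produces a cover.
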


Thus monotonicity results on $\GTS_n$ show that max-min pair is either $(P_n,S_n)$ or $(S_n,P_n)$.

\subsection{$q$-Laplacian and Exponential distance matrix of a tree}
\label{subsec:ex-dist-mat}

For a graph $G$, let $B_1=\sL_G^q|_{q=1}^{}=L_G=D-A$ and $B_2=\sL_G^q|_{q=-1}^{}=D+A$, where $D$ is the diagonal matrix with degrees on the diagonal 
and $A$ is the adjacency matrix of $G$. 
It is well known that  $B_1$ and $B_2$ are   
similar matrices for a bipartite graph $G$. 
Thus, when $q=\pm 1$, the $q$-Laplacian matrix $\sL_T^q$ of a tree $T$ is positive semidefinite. In this case, all the eigenvalues of $\sL_{T}^{q}$ are non-negative and 
the multiplicity of zero as an eigenvalue of $\sL_{T}^{q}$ is $1$. 
Bapat, Lal and Pati  \cite[Propositions 3.4 and 3.7]{bapat-lal-pati} proved the following result.

\begin{lemma}[Bapat, Lal and Pati] 
	\label{lem:bapat-lal-pati}
	Let $T$ be a tree on $n$ vertices with $q$-Laplacian $\sL_{T}^{q}$. Then, 
	\begin{enumerate}
		\item $\det (\sL_T^q)=1-q^2$.
		\item $\sL_{T}^{q}$ is positive definite when  $q\in \RR$ with $|q|<1$. 
		\item $\sL_{T}^{q}$ has exactly one negative eigenvalue when $q\in \RR$ with $|q|>1$.
	\end{enumerate}
\end{lemma}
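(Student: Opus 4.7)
The plan is to prove (1) by induction on $n$ and then derive (2) and (3) from (1) via a continuity-in-$q$ argument, leaning on the fact (already recorded just before the lemma) that at $q = \pm 1$ the matrix $\sL_T^q$ is positive semidefinite with $0$ as a simple eigenvalue.

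For part (1), I would induct on the number of vertices $n$. The base case $n=1$ is immediate, since $\sL_T^q$ is the $1\times 1$ matrix $(1-q^2)$. For the inductive step, pick a leaf $v$ with unique neighbor $u$. Because $d_v = 1$, the row of $\sL_T^q$ indexed by $v$ has a $1$ on the diagonal, a $-q$ in column $u$, and zeros elsewhere. Performing the elementary row operation $\mathrm{row}(u) \leftarrow \mathrm{row}(u) + q\cdot\mathrm{row}(v)$ zeros out the $(u,v)$ entry and shifts the $(u,u)$ entry from $1 + q^2(d_u - 1)$ to $1 + q^2(d_u - 2)$. This is exactly the $(u,u)$ entry of $\sL_{T-v}^q$, while all other rows are unchanged, so the matrix obtained by deleting row and column $v$ is precisely $\sL_{T-v}^q$. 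Expanding along column $v$ (which now has only the single nonzero entry $(v,v)=1$) gives $\det(\sL_T^q) = \det(\sL_{T-v}^q)$, and the inductive hypothesis on $T-v$ finishes the claim.

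For part (2), $\sL_T^q$ is real symmetric for $q\in\RR$, so its eigenvalues are continuous functions of $q$, and at $q=0$ every eigenvalue equals $1$. Part (1) gives $\det(\sL_T^q) = 1-q^2 \neq 0$ on the open interval $(-1,1)$, so no eigenvalue can cross $0$ there. Continuity then forces every eigenvalue to remain strictly positive throughout $(-1,1)$, which is exactly positive definiteness.

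For part (3), at $q = \pm 1$ the lemma's setup already guarantees that $0$ is a simple eigenvalue of $\sL_T^q$. Hence as $q$ increases past $1$ (respectively, decreases past $-1$), exactly one continuous eigenvalue branch crosses through $0$ and becomes nonzero on the other side. By (1), $\det(\sL_T^q) = 1-q^2 < 0$ for $|q|>1$, so the number of negative eigenvalues is odd, forcing this branch to become negative; and since the determinant is nowhere zero on $(1,\infty)$ or $(-\infty,-1)$, no further sign change is possible, so the count of negative eigenvalues remains exactly $1$.

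The hard part will be the bookkeeping inside (1): the row operation must shift the $(u,u)$ diagonal entry by exactly the right amount so that the cofactor matrix becomes $\sL_{T-v}^q$ on the nose rather than some perturbation thereof; this is where the interaction between the drop in $u$'s degree from $T$ to $T-v$ and the $q^2$ coefficient in the $q$-Laplacian is pinned down. Parts (2) and (3) are then formal consequences of (1) via continuity and sign-counting.
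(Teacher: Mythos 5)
Your argument is correct, and it is worth noting that the paper itself offers no proof of this lemma at all: it is quoted verbatim from Bapat, Lal and Pati \cite[Propositions 3.4 and 3.7]{bapat-lal-pati}, so there is nothing internal to compare against. Your part (1) is a clean leaf-deletion induction, and the bookkeeping you flag as the delicate point does work out: adding $q$ times the leaf's row to its neighbour's row changes the $(u,u)$ entry from $1+q^2(d_u-1)$ to $1+q^2(d_u-2)$, which is exactly the diagonal entry of $\sL_{T-v}^q$ since $u$ loses one degree, and the cofactor expansion along the now-cleared column $v$ gives $\det(\sL_T^q)=\det(\sL_{T-v}^q)$ on the nose. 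Parts (2) and (3) are then sound: continuity of the ordered eigenvalues in $q$, the nonvanishing of $1-q^2$ on $(-1,1)$, and the value $\sL_T^0=I$ give positive definiteness on $|q|<1$; and for $|q|>1$ the count of negative eigenvalues is locally constant wherever the determinant is nonzero, equals one just past $q=\pm 1$ because the $n-1$ eigenvalues bounded away from zero stay positive while the sign of $\det(\sL_T^q)=1-q^2<0$ forces the remaining branch below zero, and hence equals one on all of $(1,\infty)$ and $(-\infty,-1)$. The only slightly loose phrase is ``exactly one continuous eigenvalue branch crosses through $0$,'' which on its own begs the question, but your subsequent parity-plus-no-further-crossing argument supplies the missing justification. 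The one external input you use, that $0$ is a simple eigenvalue of $\sL_T^q$ at $q=\pm 1$, is recorded in the paper immediately before the lemma and reduces to the standard fact that a connected graph's Laplacian has $0$ as a simple eigenvalue, so your proof is genuinely self-contained modulo that classical fact.
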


In \cite{bapat-lal-pati}, Bapat, Lal and Pati introduced the 
exponential distance matrix $\ED_T$ of a tree $T$. 
We recall its definition, let $T$ be a tree with $n$ vertices, 
then its exponential distance matrix   
$\ED_T=(e_{i,j})_{1\leq i,j\leq n}$ is defined as follows:  
the entry $e_{i,j}=1$, if $i=j$ and $e_{i,j}=q^{d_{i,j}}$, if $i\neq j$, 
where $d_{i,j}$ is the distance between vertex $i$ and vertex $j$ in $T$. 
Clearly $\ED_T$ is a symmetric matrix, hence all its eigenvalues are real.   
Bapat, Lal and Pati \cite[Lemma 3.8]{bapat-lal-pati} proved the following result 
about the relationship between the eigenvalues of $\sL_{T}^{q}$ and $\ED_T$. 
\begin{lemma}[Bapat, Lal and Pati]
	\label{lem:bapat_ED_ev}
	Let $T$ be a tree with $n$ vertices. 
	Let $\sL_{T}^{q}$ and $\ED_T$ be the  $q$-Laplacian 
	and the exponential distance matrix of $T$ respectively. 
	If $q\neq \pm 1$, then $\ED_T^{-1}=\frac{1}{1-q^2} \sL_{T}^{q}.$
	Moreover,  $\frac{1-q^2}{\lambda_i(\sL_{T}^{q})}$ is an eigenvalue of $\ED_T$, 
	where $\lambda_i(\sL_{T}^{q})$ is an eigenvalue of $\sL_{T}^{q}$. 
\end{lemma}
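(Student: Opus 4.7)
The plan is to establish the matrix identity $\ED_T \cdot \sL_T^q = (1-q^2) I$ by a direct entry-wise computation, from which both the inversion formula and the eigenvalue statement follow immediately.

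First I would fix vertices $i,j$ and expand the $(i,j)$ entry of the product using the definitions: $(\ED_T)_{i,k} = q^{d_{i,k}}$ (with $d_{i,i}=0$), and by the definition $\sL_T^q = I + q^2(D-I) - qA$, the column $j$ of $\sL_T^q$ has entry $1 + q^2(d_j - 1)$ in row $j$, entry $-q$ in row $k$ whenever $k$ is adjacent to $j$, and $0$ elsewhere. Hence
\[
(\ED_T \cdot \sL_T^q)_{i,j} \;=\; q^{d_{i,j}}\bigl(1 + q^2(d_j - 1)\bigr) \;-\; q \sum_{k \sim j} q^{d_{i,k}}.
\]

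The main step is to evaluate the neighbor sum using the tree structure. Here I would use the key observation that in a tree there is a unique path between any two vertices: if $i \neq j$, then among the neighbors of $j$ exactly one vertex $k^*$ lies on the unique $i$--$j$ path, and it satisfies $d_{i,k^*} = d_{i,j}-1$; every other neighbor $k$ of $j$ lies on the far side of $j$ from $i$ and hence satisfies $d_{i,k} = d_{i,j}+1$. Substituting, the bracketed sum becomes $q^{d_{i,j}-1} + (d_j-1)q^{d_{i,j}+1}$, and a short cancellation gives $(\ED_T \cdot \sL_T^q)_{i,j} = 0$ for $i \neq j$. For $i=j$, every neighbor of $j$ is at distance $1$, and the expression collapses to $1 + q^2(d_j-1) - q^2 d_j = 1 - q^2$. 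This yields $\ED_T \cdot \sL_T^q = (1-q^2)I$.

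When $q \neq \pm 1$, Lemma \ref{lem:bapat-lal-pati}(1) gives $\det(\sL_T^q) = 1-q^2 \neq 0$, so $\sL_T^q$ is invertible and hence so is $\ED_T$; dividing the identity by $1-q^2$ gives $\ED_T^{-1} = \frac{1}{1-q^2}\sL_T^q$. For the eigenvalue statement, if $\sL_T^q v = \lambda_i(\sL_T^q) v$ with $v \neq 0$, then $\ED_T^{-1} v = \frac{\lambda_i(\sL_T^q)}{1-q^2} v$, so $v$ is an eigenvector of $\ED_T$ with eigenvalue $\frac{1-q^2}{\lambda_i(\sL_T^q)}$, which is well-defined since all eigenvalues of $\sL_T^q$ are nonzero in this regime.

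The only nontrivial step is the entry-wise cancellation, and the decisive ingredient is the uniqueness of paths in a tree, which forces exactly one neighbor of $j$ to be closer to $i$ and all others to be farther by exactly one. If I were to attempt the same identity on a general graph it would fail, so this tree property is where the proof truly lives.
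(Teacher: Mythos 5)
Your proof is correct. Note that the paper does not prove this lemma at all --- it is quoted verbatim from Bapat, Lal and Pati \cite[Lemma 3.8]{bapat-lal-pati} --- so there is no internal argument to compare against; your direct entry-wise verification of $\ED_T\,\sL_T^q=(1-q^2)I$ is the standard one, and the key step (exactly one neighbour $k^*$ of $j$ satisfies $d_{i,k^*}=d_{i,j}-1$ while the remaining $d_j-1$ neighbours satisfy $d_{i,k}=d_{i,j}+1$, by uniqueness of paths in a tree) is exactly where the tree hypothesis is used. The passage from the left inverse to the two-sided inverse and to the eigenvalue correspondence, including the observation that $\det(\sL_T^q)=1-q^2\neq 0$ guarantees $\lambda_i(\sL_T^q)\neq 0$, is also handled correctly.
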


Let $T$ be a tree on $n$ vertices with $q$-Laplacian $\sL_{T}^{q}$ and exponential distance matrix $\ED_T$. 
Let the eigenvalues of $\sL_T^q$ be  
$\lambda_{\max}(\sL_T^q)=\lambda_1(\sL_T^q)\geq \lambda_2(\sL_T^q) \geq 
\cdots \geq \lambda_n(\sL_T^q)=\lambda_{\min}(\sL_T^q)$. 
From Lemma \ref{lem:bapat-lal-pati}, it is easy to see  that $\lambda_{n-1}(\sL_T^q)>0$ for all $q \in \RR.$ 
Let the eigenvalues of $\ED_T$ be  
$\lambda_{\max}(\ED_T)=\lambda_1(\ED_T)\geq \lambda_2(\ED_T) \geq 
\cdots \geq \lambda_n(\ED_T)=\lambda_{\min}(\ED_T)$. 
Nagar and Sivasubramanian 
\cite[Remark 8 and Corollary 11]{mukesh-siva-immanantal_polynomial}    proved that 
both  $\sL_{T}^{q}|_{q=\alpha}$ and $\sL_{T}^{q}|_{q=-\alpha}$ have the same characteristic polynomial for  a tree $T$ 
and for all $\alpha\in \RR$. 
Thus, the multiset of eigenvalues of both the matrices  $\sL_{T}^{q}|_{q=\alpha}$ and $\sL_{T}^{q}|_{q=-\alpha}$ are equal.
This argument is used to prove the following lemma which will be used in the proof of Lemma \ref{lem:min_T1<2nd_min_T2}.

\begin{lemma}
	\label{lem:mult_small_ev}
	Let $T$ be a tree on $n$ vertices with $q$-Laplacian   $\sL_{T}^{q}$. 
	Then for all $q\in \RR\backslash\{0\}$, 
	the algebraic multiplicity of  $\lambda_{\min}(\sL_{T}^{q})$ as an eigenvalue of $\sL_{T}^{q}$ is 1.
\end{lemma}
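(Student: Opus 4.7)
The plan is to reduce to $q > 0$ using the $q \leftrightarrow -q$ symmetry of the characteristic polynomial of $\sL_T^q$ recorded just above the statement, and then split the positive axis into three subranges: $q = 1$, $q > 1$, and $0 < q < 1$. By the Nagar--Sivasubramanian result, the multisets of eigenvalues of $\sL_{T}^{q}|_{q=\alpha}$ and $\sL_{T}^{q}|_{q=-\alpha}$ coincide, so in particular the algebraic multiplicity of $\lambda_{\min}$ is the same at $q = \alpha$ and at $q = -\alpha$. Thus it is enough to establish the claim for $q > 0$.

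For $q = 1$, the matrix $\sL_T^q$ specialises to the ordinary Laplacian $L_T$, whose smallest eigenvalue $0$ has multiplicity equal to the number of connected components of $T$, namely $1$. For $q > 1$, part (3) of Lemma \ref{lem:bapat-lal-pati} says that $\sL_T^q$ has exactly one negative eigenvalue (counted with multiplicity); this eigenvalue must then be $\lambda_{\min}(\sL_T^q)$, so its multiplicity is $1$.

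The remaining range $0 < q < 1$ is the heart of the argument. Here all eigenvalues of $\sL_T^q$ are strictly positive by Lemma \ref{lem:bapat-lal-pati}(2). The plan is to transfer the question to the exponential distance matrix via Lemma \ref{lem:bapat_ED_ev}, which gives $\ED_T = (1-q^2)(\sL_T^q)^{-1}$ and hence $\lambda_{\min}(\sL_T^q) = (1-q^2)/\lambda_{\max}(\ED_T)$. Because $0 < q < 1$, every entry of $\ED_T$ equals $q^{d_{i,j}} > 0$, so $\ED_T$ is an entrywise positive matrix. The Perron--Frobenius theorem then forces $\lambda_{\max}(\ED_T)$ to be a simple eigenvalue, which immediately yields simplicity of $\lambda_{\min}(\sL_T^q)$.

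The main obstacle is precisely the subrange $0 < q < 1$: positive definiteness of $\sL_T^q$ and the determinant formula from Lemma \ref{lem:bapat-lal-pati}(1) do not by themselves control multiplicities, and the essential ingredient is the entrywise positivity of $\ED_T$ combined with Perron--Frobenius. The $q \leftrightarrow -q$ symmetry is what saves us from having to argue directly about $\ED_T$ in the range $-1 < q < 0$, where the matrix is no longer entrywise positive.
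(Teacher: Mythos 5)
Your proposal is correct and follows essentially the same route as the paper: reduce to $q>0$ via the $q\leftrightarrow -q$ symmetry, dispose of $q\geq 1$ using the sign information from Lemma \ref{lem:bapat-lal-pati} (the paper treats $q=1$ and $q>1$ together via $\lambda_{\min}\leq 0<\lambda_{n-1}$, while you split them, but the content is the same), and handle $0<q<1$ by passing to the entrywise positive matrix $\ED_T$ and invoking Perron's theorem exactly as the paper does.
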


\begin{proof}
	As for all $\alpha\in \RR$, the multiset of eigenvalues of both the matrices  $\sL_{T}^{q}|_{q=\alpha}$ and $\sL_{T}^{q}|_{q=-\alpha}$  are equal, it is sufficient to prove the corollary when $q\in \RR$ with $q>0$. 
	We first consider the case when $q\in \RR$ with $q\geq 1$, from Lemma \ref{lem:bapat-lal-pati}, 
	$\lambda_{\min}(\sL_{T}^{q})\leq 0$ and $\lambda_{n-1}(\sL_{T}^{q})>0$. 
	Thus, when $q\in \RR$ with $q\geq 1$, the algebraic multiplicity of  
	$\lambda_{\min}(\sL_{T}^{q})$ as an eigenvalue of $\sL_{T}^{q}$ is 1. 
	
	We next consider the case when $q\in \RR$ with $0<q<1$. 
	In this case each entry of the matrix  $\ED_T$ is   strictly positive. 
	Therefore by Perron's Theorem (see Horn and Johnson \cite[page 526 ]{horn-johnson-matrix-analysis}), 
	the algebraic multiplicity of $\lambda_{\max}(\ED_T)$ as an eigenvalue of $\ED_T$ is $1$ for all $q\in \RR$ with $0<q<1$.  
	In this case, from Lemma \ref{lem:bapat_ED_ev}, we get the following. 
	$$\lambda_{\max}(\ED_T)=\frac{1-q^2}{\lambda_{\min}(\sL_{T}^{q})} > 
	\frac{1-q^2}{\lambda_{n-1}(\sL_{T}^{q})}=\lambda_{2}(\ED_T).$$  
	Thus, $\lambda_{\min}(\sL_{T}^{q})< \lambda_{n-1}(\sL_{T}^{q})$ for all $q\in \RR$ with $0<q<1$ and the proof is complete.
\end{proof}

\begin{remark}
Lemma \ref{lem:mult_small_ev} generalizes the known theorem (see Godsil and Royle \cite{godsil-royle-book}) that 
$\lambda_2(L_G)>0$ for a connected graph $G$ as follows:  The algebraic multiplicity of $\lambda_{\min}(L_G)=0$ is $1$. 
Lemma \ref{lem:mult_small_ev} shows that for a tree $T$, the algebraic multiplicity of $\lambda_{\min}(\sL_T^q)$ for all $q\in \RR\backslash\{0\}$ is again $1$. 
It would be interesting to see if 
$\lambda_2(\sL_G^q)\neq \lambda_1(\sL_G^q)$ for all $q\in \RR\backslash\{0\}$ 
for all connected graphs $G$.  
\end{remark}

\section{General lemma}
\label{sec:gen_lemma}

We begin by recalling  
the following definition due to  Csikv{\'a}ri \cite{csikvari-poset2}. 
Let $Q_1=(V_1, E_1)$ and $Q_2=(V_2, E_2)$ be two trees on disjoint vertex sets. 
Let $v_1\in V_1$ and $v_2\in V_2$. 
Construct a new tree $T$ by moving all
the neighbours of $v_2$ to $v_1$  
and then deleting $v_2$.  
We thus treat $v_1$ and $v_2$ as a single vertex in $T$. 
The obtained tree $T$ is denoted by $Q_1^{v_1}:Q_2^{v_2}$ 
and has vertex set $V(Q_1^{v_1}:Q_2^{v_2})=V_1\cup V_2-\{v_2\}$ and 
edge set $E(Q_1^{v_1}:Q_2^{v_2})=E_1 \cup E_2$. 
We next illustrate this operation.

Let $T_1$ and $T_2$ be two trees with $n$ vertices 
such that $T_2$ covers $T_1$ in $\GTS_n$.  
Let $E(P_{k})$ be the edges on the path $P_{k}$ in $T_1$.  
Let $H_1$ and $H_2$ be the connected components of $T_1-E(P_{k})$  containing 
vertices $1$ and $k$  respectively. 
For the example of Figure \ref{fig:gts_example_lastchap},    
$H_1$ and $H_2$ are the subtrees of $T_1$ with vertex sets $X\cup\{1\}$ and $Y\cup\{k\}$ respectively. 
We also treat $H_2$ as a subtree of $T_2$ with vertex set $Y\cup\{1\}$. 
Therefore,  
$T_1=(H_1^1:P_k^1)^k:H_2^k$ and $T_2=(H_1^1:P_k^1)^1:H_2^1$. 
Thus, we obtain the following remark. We will use it in Section \ref{sec:F_T}. 
 \begin{remark}
 \label{rem:even_odd_no_vertices}
 Let $T_1$ and $T_2$ be two trees on $n$ vertices such that  $T_2$ covers $T_1$ in $\GTS_n$. 
 Then,  $|P_k|+|H_1|+|H_2|=n+2$. 
 When $n$ is an even number then either all three subtrees $P_k$, $H_1$ and 
 $H_2$ have an even number of vertices or exactly one subtree has an even number of vertices. 
 Similarly, when $n$ is an odd number then either all three subtrees have an odd number of vertices 
 or exactly one subtree has an odd number of vertices.
 \end{remark}

Let $T$ be a tree on $n$ vertices with $q$-Laplacian $\sL_{T}^{q}$.  
We consider  
the characteristic polynomial 
$f^{\sL_{T}^{q}}(q,x) = \det (xI -\sL_{T}^{q})$.  
This is a bivariate polynomial. 
For a fixed vertex $v\in T$, 
let  $\sL_{T}^{q}|v$ be  the submatrix 
obtained by deleting $v$-th row and $v$-th column from $\sL_{T}^{q}$.
 Let $f^{\sL_{T}^q|v}(q,x)=\det(xI-\sL_{T}^{q}|v)$.

The following lemma is called the general lemma which  was proved 
by  Csikv{\'a}ri \cite{csikvari-poset2} for  graph polynomials in one variable $x$. 
We will apply it to the  
characteristic polynomial of $\sL_{T}^{q}$ which is a bivariate polynomial.  
Since the proof is identical to that of 
Csikv{\'a}ri \cite[Theorem 5.1]{csikvari-poset2}, 
we omit it and merely state the result. 
\begin{lemma}
\label{lem:gen_lemma}
Let $Q_1$ and $Q_2$ be two trees. 
For any two fixed vertices $v_1\in Q_1$ 
and $v_2\in Q_2$, let $T=Q_1^{v_1}:Q_2^{v_2}$. Suppose 
\begin{eqnarray}
f^{\sL_{T}^q}(q,x) 
& = & y_1f^{\sL_{Q_1}^{q}}(q,x)f^{\sL_{Q_2}^{q}}(q,x)
+ y_2f^{\sL_{Q_1}^{q}}(q,x)f^{\sL_{Q_2}^{q}|v_2}(q,x) 
+ y_2f^{\sL_{Q_1}^{q}|v_1}(q,x)f^{\sL_{Q_2}^{q}}(q,x) 
\nonumber \\ & & 
+ y_3f^{\sL_{Q_1}^{q}|v_1}(q,x) f^{\sL_{Q_2}^{q}|v_2}(q,x), 
\label{eq:gen_lem_condition}
\end{eqnarray} 
where $y_1$, $y_2$, $y_3$ are bivariate rational functions of $q$ and 
$x$. 
Let $T_1$ and $T_2$ be two trees with $n$ vertices such that 
$T_2$ covers  $T_1$ in $\GTS_n$ and 
$y_2f^{\sL_{K_2}^{q}}(q,x)+y_3f^{\sL_{K_2}^{q}|1}(q,x)\neq 0$, 
where $K_2$ is the complete graph on $2$ vertices. Then, 
\begin{eqnarray}
f^{\sL_{T_1}^{q}}(q,x)-f^{\sL_{T_2}^{q}}(q,x) & = &
y_4\left[y_2f^{\sL_{P_k}^{q}}(q,x)+y_3f^{\sL_{P_k}^{q}|1}(q,x)\right]
\left[y_2f^{\sL_{H_1}^{q}}(q,x)+y_3f^{\sL_{H_1}^{q}|1}(q,x)\right]
\nonumber \\
& &  \times  \left[y_2f^{\sL_{H_2}^{q}}(q,x)+y_3f^{\sL_{H_2}^{q}|1}(q,x)\right],
\nonumber  \\
\mbox{ where } & & y_4  =  \frac{ \left[f^{\sL_{P_3}^{q}|1}(q,x)-f^{\sL_{P_3}^{q}|2}(q,x)\right]}
{\left[y_2f^{\sL_{K_2}^{q}}(q,x)+y_3f^{\sL_{K_2}^{q}|1}(q,x)\right]^2}.
\label{eq:y_4}
\end{eqnarray}
\end{lemma}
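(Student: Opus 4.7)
The plan is to apply the hypothesis (\ref{eq:gen_lem_condition}) twice, following Csikv\'{a}ri's template. Writing $R = H_1^1 : P_k^1$, the descriptions of $T_1$ and $T_2$ given just before the statement become $T_1 = R^k : H_2^k$ and $T_2 = R^1 : H_2^1$. I apply (\ref{eq:gen_lem_condition}) to each of $T_1,T_2$ with the common summand $H_2$, and note that $f^{\sL_{H_2}^q|v}(q,x)$ depends only on $H_2$ and not on the label $v$ of the distinguished vertex. Hence the $y_1 f^{\sL_R^q}(q,x) f^{\sL_{H_2}^q}(q,x)$ terms in the two expansions are identical and cancel on subtraction, giving
\[
f^{\sL_{T_1}^{q}}(q,x) - f^{\sL_{T_2}^{q}}(q,x) = \bigl[y_2 f^{\sL_{H_2}^{q}}(q,x) + y_3 f^{\sL_{H_2}^{q}|1}(q,x)\bigr]\bigl[f^{\sL_{R}^{q}|k}(q,x) - f^{\sL_{R}^{q}|1}(q,x)\bigr].
\]
This isolates the $H_2$-factor of the claimed triple product.

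The next step is to show that $f^{\sL_R^q|k} - f^{\sL_R^q|1}$ factors as $y_4$ times the $H_1$- and $P_k$-factors. The key observation is that $R|1$ is the disjoint union of $H_1|1$ and $P_k|1$, so by multiplicativity of $\det$ on block-diagonal submatrices one has $f^{\sL_R^q|1}(q,x) = f^{\sL_{H_1}^q|1}(q,x)\,f^{\sL_{P_k}^q|1}(q,x)$, while $R|k = H_1^1 : P_{k-1}^1$ is itself a gluing and may be expanded by a second application of (\ref{eq:gen_lem_condition}). Peeling off the edges of $P_k$ one at a time and using the same cancellation trick, the contributions telescope and yield
\[
f^{\sL_R^q|k}(q,x) - f^{\sL_R^q|1}(q,x) = y_4\bigl[y_2 f^{\sL_{H_1}^q}(q,x) + y_3 f^{\sL_{H_1}^q|1}(q,x)\bigr]\bigl[y_2 f^{\sL_{P_k}^q}(q,x) + y_3 f^{\sL_{P_k}^q|1}(q,x)\bigr]
\]
for a rational function $y_4(q,x)$ depending only on $y_1,y_2,y_3$ and not on $H_1,H_2,P_k$. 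To pin down $y_4$ explicitly, I specialize to $H_1 = P_k = K_2$, so that $R = K_2^1 : K_2^1 = P_3$; with this choice the identity collapses to $f^{\sL_{P_3}^q|1}(q,x) - f^{\sL_{P_3}^q|2}(q,x) = y_4\bigl[y_2 f^{\sL_{K_2}^q}(q,x) + y_3 f^{\sL_{K_2}^q|1}(q,x)\bigr]^2$, and solving for $y_4$ (legitimate by the non-vanishing hypothesis) yields exactly formula (\ref{eq:y_4}). Substituting back into the first display gives the conclusion.

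The main obstacle is the bookkeeping in the inductive peeling along the path $P_k$: one must check that at each internal degree-two vertex the expansion produced by (\ref{eq:gen_lem_condition}) contributes only to the two factors being accumulated, with no spurious cross-terms, so that after all $k-1$ edges of $P_k$ have been peeled exactly three factors survive, one per subtree $H_1, P_k, H_2$. Fortunately, the hypothesis (\ref{eq:gen_lem_condition}) is a purely formal identity in the polynomials $f^{\sL_{Q_i}^q}, f^{\sL_{Q_i}^q|v_i}$ and the rational coefficients $y_1,y_2,y_3$, with no reference to the internal matrix entries of $\sL_T^q$ beyond the four polynomials it names. Consequently Csikv\'{a}ri's argument for univariate graph polynomials transfers verbatim to the bivariate characteristic polynomial of $\sL_T^q$, with $q$ treated as an inert parameter throughout; this is why the paper invokes Csikv\'{a}ri's proof unchanged.
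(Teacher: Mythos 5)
Your architecture matches the intended (cited) argument of Csikv\'{a}ri: peel off $H_2$ to isolate the factor $y_2f^{\sL_{H_2}^{q}}+y_3f^{\sL_{H_2}^{q}|1}$, factor the remaining difference $f^{\sL_{R}^{q}|k}-f^{\sL_{R}^{q}|1}$ along the path, and pin down $y_4$ by specializing $H_1=P_k=K_2$ and using the non-vanishing hypothesis. The first reduction is correct, although your justification that $f^{\sL_{H_2}^{q}|v}$ ``does not depend on the label $v$'' is imprecise: it certainly depends on which vertex is deleted; what saves the cancellation is that the vertex labelled $k$ in $T_1$ and the vertex labelled $1$ in $T_2$ are the \emph{same} vertex of $H_2$ under the canonical identification. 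The paper itself offers no proof of this lemma (it defers entirely to Csikv\'{a}ri), so there is nothing to compare line by line.

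The genuine gap is in the middle step. You assert that ``$R|k=H_1^1:P_{k-1}^1$ is itself a gluing and may be expanded by a second application of \eqref{eq:gen_lem_condition}.'' This conflates the principal submatrix $\sL_{R}^{q}|k$ with the $q$-Laplacian of the vertex-deleted tree $R-k$: these are different matrices, because the diagonal entry of the neighbour $k-1$ in $\sL_{R}^{q}|k$ is $1+q^2(d_{k-1}-1)$ with $d_{k-1}$ its degree in $R$, one more than its degree in $R-k$. Consequently \eqref{eq:gen_lem_condition}, which only speaks about $f^{\sL_{T}^{q}}$ for glued \emph{trees} $T$, cannot be applied to $f^{\sL_{R}^{q}|k}$, and the stated hypothesis alone does not suffice to run your induction. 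What is needed in addition is the rank-one-update identity
\begin{equation*}
f^{\sL_{Q^{v}:K_2^{v}}^{q}|u}(q,x)=f^{\sL_{Q}^{q}}(q,x)-q^2f^{\sL_{Q}^{q}|v}(q,x),
\end{equation*}
where $u$ is the new pendant vertex; it follows from $\sL_{Q^{v}:K_2^{v}}^{q}|u=\sL_{Q}^{q}+q^2e_ve_v^t$ by cofactor expansion, and it is what actually propagates the pair $\bigl(f^{\sL_{R_j}^{q}},f^{\sL_{R_j}^{q}|j}\bigr)$ one step down the path. This is exactly the point at which the adjacency-matrix case (where the principal submatrix \emph{is} the matrix of the vertex-deleted graph, so the recursion genuinely iterates) differs from the $q$-Laplacian case; so ``Csikv\'{a}ri's argument transfers verbatim with $q$ inert'' is the claim that needs verifying, not the reason the verification can be skipped. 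Once this identity is supplied, your telescoping, the block-diagonal factorization $f^{\sL_{R}^{q}|1}=f^{\sL_{H_1}^{q}|1}f^{\sL_{P_k}^{q}|1}$, and the specialization determining $y_4$ all go through; for instance, for $k=2$ one gets $f^{\sL_{R}^{q}|2}-f^{\sL_{R}^{q}|1}=f^{\sL_{H_1}^{q}}-(x-1+q^2)f^{\sL_{H_1}^{q}|1}$ directly, which is the claimed factorization because $y_2f^{\sL_{K_2}^{q}}+y_3f^{\sL_{K_2}^{q}|1}=-q^2x=1/y_4$.
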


Let $ Q_1$ and $ Q_2$ be two trees and let $T=Q_1^{v_1}:Q_2^{v_2}$. 
Let $[m]=\{1,2,\ldots,m\}$ be the vertex set of $T$ and let $v_1=r=v_2$, where 
 $\{1,2,\ldots,r \}$ and  
$\{r,r+1,\ldots,m \}$ be the vertex sets of $Q_1$ and $Q_2$ respectively.
Let $\sL_{Q_1}^{q}$, $\sL_{Q_2}^{q}$ 
and $\sL_{T}^q$ be the  $q$-Laplacians of $Q_1$, $Q_2$ 
and $T$ respectively. 
We extend the notion of  $\sL_{T}^{q}|v$ to an arbitrary subset of $[m]$. 
Let $S\subseteq [m]$ and let $S'=[m]-S$. 
Let  $\sL_{T}^{q}|S$ be the submatrix of $\sL_{T}^{q}$ induced on the  rows and 
columns with indices in  the set $S'$.
We need the following lemma to obtain the   
rational functions $y_1$, $y_2$, $y_3$ and $y_4$.  

\begin{lemma}
\label{lem:condition_gen_lemma}
Let $\sL_{Q_1}^{q}$, $\sL_{Q_2}^{q}$, $\sL_{T}^q$, $\sL_{Q_1}^{q}|v_1$ 
and $\sL_{Q_2}^{q}|v_2$ be the matrices as defined in the above paragraph. 
Let $f^{\sL_{Q_1}^{q}}(q,x)$, $f^{\sL_{Q_2}^{q}}(q,x)$,  
$f^{\sL_{T}^{q}}(q,x)$, $f^{\sL_{Q_1}^{q}|v_1}(q,x)$ 
and $f^{\sL_{Q_2}^{q}|v_2}(q,x)$ denote the characteristic polynomials of 
$\sL_{Q_1}^{q}$, $\sL_{Q_2}^{q}$,  $\sL_{T}^q$, $\sL_{Q_1}^{q}|v_1$ and $\sL_{Q_2}^{q}|v_2$ respectively. 
Then,  these polynomials satisfy the condition given in  \eqref{eq:gen_lem_condition}. 
\end{lemma}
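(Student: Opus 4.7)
The plan is to exploit the block structure of $xI - \sL_T^q$ induced by the decomposition $T = Q_1^{v_1}:Q_2^{v_2}$. With the indexing of $[m]$ as $S_1 = \{1,\ldots,r-1\}$, $\{r\}$, $S_2 = \{r+1,\ldots,m\}$, no edge of $T$ joins $S_1$ to $S_2$, so $xI - \sL_T^q$ takes the block form
\[
xI - \sL_T^q \;=\; \begin{pmatrix} B & u & 0 \\ u^T & \alpha & w^T \\ 0 & w & C \end{pmatrix},
\]
where $B = xI - \sL_{Q_1}^q|v_1$, $C = xI - \sL_{Q_2}^q|v_2$, the vectors $u$ and $w$ are the off-diagonal pieces of the $v_1$-column of $-\sL_{Q_1}^q$ and the $v_2$-column of $-\sL_{Q_2}^q$ respectively, and $\alpha = x - (\sL_T^q)_{rr}$. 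A direct computation from $(\sL_G^q)_{vv} = 1 + q^2(\deg_G(v) - 1)$ together with $\deg_T(r) = \deg_{Q_1}(v_1) + \deg_{Q_2}(v_2)$ gives the key identity $\alpha = \alpha_1 + \alpha_2 - x + (1 - q^2)$, where $\alpha_i := x - (\sL_{Q_i}^q)_{v_i v_i}$. This shift $-x + (1 - q^2)$ is the only algebraic trace left by the merging of $v_1$ and $v_2$.

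Next, I would compute $f^{\sL_T^q}(q,x)$ by a two-step Schur-complement reduction, eliminating first the $C$-block and then the $B$-block. Because the $(S_1, S_2)$-block of the matrix is zero, the rank-one correction coming from the first Schur step lives only in the $(r,r)$-slot (with value $w^T C^{-1} w$), so combining the two steps gives
\[
f^{\sL_T^q}(q,x) \;=\; \det(B)\,\det(C)\,\bigl(\alpha - u^T B^{-1} u - w^T C^{-1} w\bigr).
\]
The same block/Schur trick applied separately to $xI - \sL_{Q_1}^q$ and $xI - \sL_{Q_2}^q$ yields $f^{\sL_{Q_1}^q}(q,x) = \det(B)(\alpha_1 - u^T B^{-1} u)$ and $f^{\sL_{Q_2}^q}(q,x) = \det(C)(\alpha_2 - w^T C^{-1} w)$, while $\det(B) = f^{\sL_{Q_1}^q|v_1}(q,x)$ and $\det(C) = f^{\sL_{Q_2}^q|v_2}(q,x)$ by definition. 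Substituting these relations and the identity for $\alpha$ into the formula above and clearing fractions yields
\[
f^{\sL_T^q} \;=\; f^{\sL_{Q_1}^q} f^{\sL_{Q_2}^q|v_2} + f^{\sL_{Q_1}^q|v_1} f^{\sL_{Q_2}^q} + (1 - q^2 - x)\, f^{\sL_{Q_1}^q|v_1} f^{\sL_{Q_2}^q|v_2},
\]
which is exactly \eqref{eq:gen_lem_condition} with the rational functions $y_1 = 0$, $y_2 = 1$ and $y_3 = 1 - q^2 - x$.

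The only subtle point, and the main obstacle worth flagging, is that the Schur-complement steps presuppose $\det(B)\,\det(C) \neq 0$. This condition fails only on a proper algebraic subset of the $(q,x)$-plane, and both sides of the derived identity are polynomials in $q$ and $x$; so the identity extends from this Zariski-dense open set to all $(q,x)$ by the polynomial identity principle. Equivalently, one can rerun the same computation with adjugates in place of inverses, in which case every intermediate quantity is already a polynomial and no invertibility is required. Either way the identity is established, completing the proof.
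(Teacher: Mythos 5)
Your proposal is correct and follows essentially the same route as the paper: both exploit the arrow-shaped block structure of $xI-\sL_{T}^{q}$ at the merged vertex $r$ together with the degree identity $\deg_T(r)=d_{v_1}+d_{v_2}$, which produces the shift $-(x-1+q^2)$ and hence $y_1=0$, $y_2=1$, $y_3=-(x-1+q^2)$. The only difference is cosmetic: you evaluate the determinant via a Schur complement and then patch invertibility by a Zariski-density (or adjugate) argument, whereas the paper expands along the $r$-th row, obtaining the same three-term identity directly as a polynomial identity with no invertibility caveat.
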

 
\begin{proof}   
Let $l_{r,r}^{}$ denote the 
$(r,r)$-th entry in the bivariate polynomial matrix $xI-\sL_{T}^{q}$. 
Therefore, 
$l_{r,r}^{}=x-(1+q^2(d_{v_1}+d_{v_2}-1))$, 
where  $d_{v_1}$ and $d_{v_2}$ are the degrees of the vertices $v_1$ 
and $v_2$ in $Q_1$ and $Q_2$ respectively. 
Let  $\mu=(\mu_i)_{1\leq i \leq r-1}$ and 
$\nu=(\nu_j)_{r+1\leq j \leq m}$ be two column  vectors such that $\mu_i=q$ if $i$ is a 
neighbour of $r$ in $Q_1$ and $\mu_i=0$ otherwise. Similarly, $\nu_j=q$ if $j$ is a 
neighbour of $r$ in $Q_2$ and $\nu_j=0$ otherwise.
Let $R=\{1,2,\ldots,r\}$, $S=\{1,2,\ldots,r-1\}$, $R'=[m]-R$ and $S'=[m]-S$. 
Therefore, 
\begin{equation}
\label{eqn:matrix_xI-sL_T}
xI-\sL_{T}^{q}
=
\left[
\begin{array}{c c c}
N & \mu & 0  \\
\mu^t  & l_{r,r}^{} & \nu^t \\
0 & \nu & M 
\end{array}
\right],
\mbox{ where } M=(xI-\sL_{T}^{q}|R) \mbox{ and } N=(xI-\sL_{T}^{q}|S').
\end{equation}  

Clearly  $\det(M)=f^{\sL_{Q_2}^{q}|v_2}(q,x)$ and $\det(N)=f^{\sL_{Q_1}^{q}|v_1}(q,x)$. 
Let $ M'  =(xI-\sL_{T}^{q}|R') $ and $N'=(xI-\sL_{T}^{q}|S)$.  Then, it is simple to see the following. 
\begin{eqnarray}
\det (M') & = & 
\det\left[ 
 \begin{array}{ c c}
 N & \mu \\
 \mu^t & x-(1+q^2(d_{v_1}+d_{v_2}-1))
 \end{array}
 \right]
 =  f^{\sL_{Q_1}^{q}}(q,x)-q^2d_{v_2}f^{\sL_{Q_1}^{q}|v_1}(q,x). \label{eq:det_M_i} \\  \nonumber\\
\det(N') & = & 
\det\left[
\begin{array}{ c c}
x-(1+q^2(d_{v_1}+d_{v_2}-1)) & \nu^t \\
\nu & M
\end{array}
\right]
=f^{\sL_{Q_2}^{q}}(q,x)-q^2d_{v_1}f^{\sL_{Q_2}^{q}|v_2}(q,x).
 \label{eq:det_N'_i}
 \end{eqnarray}    
 
From \eqref{eqn:matrix_xI-sL_T}, when we  expand  $\det(xI-\sL_{T}^{q})$ with respect to the $r$-th row, 
we get  
\begin{eqnarray} 
f^{\sL_{T}^{q}}(q,x) & = &  \det (xI-\sL_{T}^{q})  
= \det (M') \det (M) +\det (N) \det (N') 
 -  l_{r,r}^{}\det (N) \det (M)  \nonumber \\
& = & \left[f^{\sL_{Q_1}^{q}}(q,x)-q^2d_{v_2}f^{\sL_{Q_1}^{q}|v_1}(q,x)\right]
f^{\sL_{Q_2}^{q}|v_2}(q,x)\nonumber \\ 
& & +f^{\sL_{Q_1}^{q}|v_1}(q,x)
\left[f^{\sL_{Q_2}^{q}}(q,x)-q^2d_{v_1}f^{\sL_{Q_2}^{q}|v_2}(q,x)\right] \nonumber \\ 
& & - \left(x-(1+q^2(d_{v_1}+d_{v_2}-1))\right)f^{\sL_{Q_1}^{q}|v_1}(q,x)f^{\sL_{Q_2}^{q}|v_2}(q,x)  \nonumber \\
& = & f^{\sL_{Q_1}^{q}}(q,x)f^{\sL_{Q_2}^{q}|v_2}(q,x)
+f^{\sL_{Q_2}^{q}}(q,x)f^{\sL_{Q_1}^{q}|v_1}(q,x) \nonumber \\
& & - (x-1+q^2)f^{\sL_{Q_1}^{q}|v_1}(q,x)f^{\sL_{Q_2}^{q}|v_2}(q,x), 
\label{eq:proved_condition} 
\end{eqnarray} 
where the third equality follows from \eqref{eq:det_M_i} 
and \eqref{eq:det_N'_i}. The last equality follows 
by doing simple manipulations completing the proof.
\end{proof}


Let $T$ be a tree with $q$-Laplacian $\sL_T^q$. For a fixed $q\in \RR\backslash \{0\}$ and 
for a fixed vertex $v\in T$, define the auxiliary polynomial  
\begin{equation}
\label{eq:def_F_T^v}
\sF_{T}^{v}(q,x)=f^{\sL_{T}^{q}}(q,x) -(x+q^2 -1)f^{\sL_{T}^{q}|v}(q,x).
\end{equation}

We recall that  $P_k$, $H_1$ and $H_2$ 
are the subtrees of $T_1$ and $T_2$, where $T_2$ covers $T_1$ in $\GTS_n$. 
From  \eqref{eq:gen_lem_condition} and 
\eqref{eq:proved_condition}, 
we get the rational functions $y_1=0$, $y_2=1$, $y_3=-(x-1+q^2)$. 

\begin{theorem}
\label{thm:result_by_gen_lemma}
Let $T_1$ and $T_2$ be two trees on $n$ vertices with $q$-Laplacians 
$\sL_{T_1}^{q}$ and $\sL_{T_2}^{q}$   respectively.  
Let $T_2$ cover $T_1$ in $\GTS_n$. Then, for all $q\in \RR\backslash \{0\}$ 
$$f^{\sL_{T_1}^{q}}(q,x)- f^{\sL_{T_2}^{q}}(q,x)
= -\frac{1}{q^2x} \sF_{P_k}^{1}(q,x)\sF_{H_1}^{1}(q,x)\sF_{H_2}^{1}(q,x),$$
where $\sF_{P_k}^{1}(q,x)$, $\sF_{H_1}^{1}(q,x)$ and 
$\sF_{H_2}^{1}(q,x)$ are the polynomials  defined in \eqref{eq:def_F_T^v}.
\end{theorem}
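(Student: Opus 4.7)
The plan is to apply the general lemma (Lemma \ref{lem:gen_lemma}) directly to the characteristic polynomials of $\sL_T^q$, after reading off the appropriate rational functions $y_1,y_2,y_3$ from the polynomial identity already proved in Lemma \ref{lem:condition_gen_lemma}. Comparing equation \eqref{eq:proved_condition} with the hypothesis \eqref{eq:gen_lem_condition}, I would identify
\[ y_1 = 0, \qquad y_2 = 1, \qquad y_3 = -(x-1+q^2). \]
The point of this choice is that, for any rooted tree $R$ at vertex $1$, the combination $y_2 f^{\sL_R^q}(q,x) + y_3 f^{\sL_R^q|1}(q,x)$ is literally the auxiliary polynomial $\sF_R^1(q,x)$ from \eqref{eq:def_F_T^v}, since $-(x-1+q^2) = -(x+q^2-1)$. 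So the three bracketed factors on the right of Lemma \ref{lem:gen_lemma} will be exactly $\sF_{P_k}^1$, $\sF_{H_1}^1$, $\sF_{H_2}^1$, and all that remains is to compute the scalar $y_4$.

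Computing $y_4$ is a short direct calculation from the small cases $K_2$ and $P_3$. For $K_2$, $\sL_{K_2}^q = \bigl(\begin{smallmatrix}1 & -q \\ -q & 1\end{smallmatrix}\bigr)$, giving $f^{\sL_{K_2}^q}(q,x) = (x-1)^2 - q^2$ and $f^{\sL_{K_2}^q|1}(q,x) = x-1$. Setting $u = x-1$, one gets
\[ y_2 f^{\sL_{K_2}^q}(q,x) + y_3 f^{\sL_{K_2}^q|1}(q,x) = u^2 - q^2 - (u+q^2)u = -q^2 x, \]
which, for $q\neq 0$, is a nonzero rational function of $x$; this verifies the non-degeneracy hypothesis of Lemma \ref{lem:gen_lemma}. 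For $P_3$ with labelling $1$--$2$--$3$, a $2\times 2$ determinant gives $f^{\sL_{P_3}^q|1}(q,x) = (x-1)(x-1-q^2) - q^2$ and $f^{\sL_{P_3}^q|2}(q,x) = (x-1)^2$, so their difference is $-q^2(x-1) - q^2 = -q^2 x$. Substituting into \eqref{eq:y_4} yields
\[ y_4 = \frac{-q^2 x}{(-q^2 x)^2} = -\frac{1}{q^2 x}. \]

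Plugging $y_2=1$, $y_3 = -(x+q^2-1)$, and $y_4 = -\frac{1}{q^2 x}$ into the conclusion of Lemma \ref{lem:gen_lemma} and recognizing each bracketed factor as $\sF^1_{P_k}$, $\sF^1_{H_1}$, $\sF^1_{H_2}$ produces exactly the stated identity. There is no serious obstacle here: the work is entirely contained in Lemma \ref{lem:gen_lemma} and Lemma \ref{lem:condition_gen_lemma}, and the only step requiring genuine computation is the twin evaluation at $K_2$ and $P_3$, which happens to produce the same factor $-q^2 x$ in numerator and denominator so that $y_4$ simplifies cleanly. The mildly delicate point is interpretive rather than technical: $y_4$ is viewed as a rational function of $q$ and $x$ (its denominator can vanish), but this causes no issue because the identity in Theorem \ref{thm:result_by_gen_lemma} is an equality of polynomials obtained after multiplication by the product of $\sF^1$-factors.
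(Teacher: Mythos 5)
Your proposal is correct and follows the paper's own proof essentially verbatim: read off $y_1=0$, $y_2=1$, $y_3=-(x-1+q^2)$ from Lemma \ref{lem:condition_gen_lemma}, verify the non-degeneracy condition and the $P_3$ difference both equal $-q^2x$, conclude $y_4=-1/(q^2x)$, and invoke Lemma \ref{lem:gen_lemma}. The only difference is that you spell out the small determinant computations for $K_2$ and $P_3$ that the paper merely asserts.
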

\begin{proof}
From Lemma \ref{lem:condition_gen_lemma}, 
we get $y_1=0$, $y_2=1$, $y_3=-(x-1+q^2)$. Therefore when $q\in \RR\backslash \{0\}$ 
$$y_2f^{\sL_{K_2}^{q}}(q,x)+y_3f^{\sL_{K_2}^{q}|1}(q,x) 
=  (x-1)^2-q^2-(x-1+q^2)(x-1)=-q^2x \neq 0.  $$
 
Note that $ f^{\sL_{P_3}^{q}|1}(q,x)-f^{\sL_{P_3}^{q}|2}(q,x) =  -q^2x.$
Thus, by \eqref{eq:y_4}, we have $\displaystyle y_4=-1/q^2x$. 
Using Lemma \ref{lem:gen_lemma}, the proof is complete.
\end{proof}

\section{Interlacing of eigenvalues of $\sL_{T}^{q}$}
\label{sec:interlacing}

Let $A$ be an $n \times n$ real symmetric matrix. 
We order  the eigenvalues of $A$  as   
$\lambda_1(A)\geq \lambda_2(A)\geq \cdots \geq \lambda_n(A).$
We need the following lemma from  Molitierno \cite[Theorem 1.2.8 and Corollary 1.2.11]{molitierno-book}.

\begin{lemma}
\label{rem:ev_A_B}
Let $A, B$ be two $n\times n$ real symmetric matrices 
with $B=zz^t$ for some column vector $z\in \RR^n$. Then,  
$$\lambda_{1}(A+B)\geq \lambda_{1}(A) 
\geq \cdots \geq \lambda_{n}(A+B)\geq \lambda_{n}(A). $$
\end{lemma}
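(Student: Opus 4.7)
The plan is to derive both interlacing inequalities from the Courant--Fischer min-max characterization, exploiting the two special features of $B=zz^t$: it is positive semidefinite, and its kernel $\{x : z^t x = 0\}$ is a hyperplane (hence has codimension at most one).

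First I will prove the ``upward push'' $\lambda_k(A+B)\geq \lambda_k(A)$ for every $k$. By Courant--Fischer,
\[
\lambda_k(A+B) \;=\; \max_{\substack{V\subseteq \RR^n \\ \dim V = k}} \;\min_{\substack{x\in V \\ \|x\|=1}} x^t(A+B)x.
\]
Since $B=zz^t$, we have $x^tBx = (z^tx)^2 \geq 0$ for every $x$, so $x^t(A+B)x \geq x^tAx$ pointwise. The max-min on the right can therefore only be larger than the corresponding quantity for $A$, giving $\lambda_k(A+B)\geq \lambda_k(A)$.

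For the ``downward push'' $\lambda_k(A)\geq \lambda_{k+1}(A+B)$, I would argue dually. Let $u_1,\dots,u_n$ be an orthonormal eigenbasis of $A$ with $Au_i=\lambda_i(A)u_i$, and set $U_k=\mathrm{span}(u_k,u_{k+1},\dots,u_n)$, so that $\dim U_k = n-k+1$ and $x^tAx \leq \lambda_k(A)$ for every unit vector $x\in U_k$. Let $H=\{x\in\RR^n : z^tx=0\}$; since $H$ has codimension at most one, $W:=U_k\cap H$ satisfies $\dim W \geq n-k$. Pick any subspace $W'\subseteq W$ of dimension exactly $n-k$. For every $x\in W'$ we have $z^tx=0$, hence $x^tBx=0$ and $x^t(A+B)x=x^tAx \leq \lambda_k(A)\|x\|^2$. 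Applying the min-max formula
\[
\lambda_{k+1}(A+B) \;=\; \min_{\substack{W\subseteq \RR^n \\ \dim W = n-k}} \;\max_{\substack{x\in W \\ \|x\|=1}} x^t(A+B)x
\]
with this particular $W'$ yields $\lambda_{k+1}(A+B)\leq \lambda_k(A)$.

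Chaining the two families of inequalities for $k=1,\dots,n$ produces the stated interlacing chain. The only subtlety is the dimension count in the second step, where one must verify that $U_k\cap H$ still has dimension at least $n-k$; this follows immediately because intersecting with a hyperplane drops the dimension by at most one. No use of Weyl's inequalities or of rank-one perturbation formulas is needed, and the argument is completely general, applying to any rank-one positive semidefinite perturbation and not requiring any special structure of $A$.
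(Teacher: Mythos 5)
Your proof is correct. Note that the paper does not actually prove this lemma --- it imports it verbatim from Molitierno's book (Theorem 1.2.8 and Corollary 1.2.11), so there is no internal argument to compare against. Your two-sided Courant--Fischer argument is the standard proof of rank-one interlacing: the pointwise inequality $x^t(A+B)x \geq x^tAx$ gives the upward push, and restricting to the codimension-one kernel of $z^t$ inside the span of the trailing eigenvectors gives the downward push, with the dimension count $\dim(U_k\cap H)\geq n-k$ handled correctly. The only cosmetic remark is that the second family of inequalities is needed only for $k=1,\dots,n-1$ (there is no $\lambda_{n+1}(A+B)$), but this does not affect the validity of the chain.
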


We have two interlacing lemmas about the eigenvalues of $\sL_T^q$ when $q\in \RR$ with either $|q|\leq 1$  or $|q|>1$. 
The following result is an interlacing lemma about the eigenvalues of $\sL_T^q$  when $|q|\leq 1$.
 
\begin{lemma}
\label{lem:q_interlacing}
Let  $T$ be a tree on $n$ vertices with a leaf vertex $l$. Let $T'=T-\{l\}$.  
Let $\sL_{T}^{q}$ and $\sL_{T'}^{q}$ be the $q$-Laplacians of 
$T$ and $T'$ respectively.
Then, for $q\in \RR$ with $|q|\leq 1$,  
$$\lambda_1(\sL_{T}^{q})\geq \lambda_1(\sL_{T'}^q) \geq  
\cdots \geq \lambda_{n-1}(\sL_{T'}^{q}) \geq \lambda_n(\sL_{T}^{q})\geq 0.$$
\end{lemma}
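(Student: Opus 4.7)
The key observation is that $\sL_{T'}^{q}$ is not quite the principal submatrix of $\sL_{T}^{q}$ obtained by deleting the row and column indexed by the leaf $l$. If $v$ denotes the unique neighbour of $l$ in $T$ with $d_T(v)=d$, then in $T'=T-\{l\}$ the degree of $v$ drops to $d-1$. Comparing entries, I would first verify the identity
\begin{equation*}
\sL_{T}^{q}|l \;=\; \sL_{T'}^{q} + q^2\,e_v e_v^{t},
\end{equation*}
where $e_v$ is the standard basis vector. This is the only place where the graph structure is used: all other entries of $\sL_{T}^{q}|l$ coincide with those of $\sL_{T'}^{q}$, and only the $(v,v)$-entry differs by $q^2$.

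Next I would produce two separate chains of interlacing inequalities. First, since $\sL_{T}^{q}|l$ is an $(n-1)\times(n-1)$ principal submatrix of the real symmetric matrix $\sL_{T}^{q}$, Cauchy's interlacing theorem gives
\begin{equation*}
\lambda_i(\sL_{T}^{q}) \;\geq\; \lambda_i(\sL_{T}^{q}|l) \;\geq\; \lambda_{i+1}(\sL_{T}^{q}), \qquad 1\leq i \leq n-1.
\end{equation*}
Second, setting $z = |q|\,e_v$, the matrix $q^2 e_v e_v^t = zz^t$ is a rank-one positive semidefinite matrix (for every real $q$), so Lemma \ref{rem:ev_A_B} applied to $A=\sL_{T'}^{q}$ and $B=zz^t$ yields
\begin{equation*}
\lambda_i(\sL_{T}^{q}|l) \;\geq\; \lambda_i(\sL_{T'}^{q}) \;\geq\; \lambda_{i+1}(\sL_{T}^{q}|l), \qquad 1\leq i \leq n-2.
\end{equation*}
Interleaving these two chains at each index $i$ produces the desired pattern
\begin{equation*}
\lambda_i(\sL_{T}^{q}) \;\geq\; \lambda_i(\sL_{T}^{q}|l) \;\geq\; \lambda_i(\sL_{T'}^{q}) \;\geq\; \lambda_{i+1}(\sL_{T}^{q}|l) \;\geq\; \lambda_{i+1}(\sL_{T}^{q}),
\end{equation*}
from which the stated interlacing $\lambda_1(\sL_{T}^{q})\geq \lambda_1(\sL_{T'}^{q})\geq \cdots \geq \lambda_{n-1}(\sL_{T'}^{q})\geq \lambda_n(\sL_{T}^{q})$ follows immediately.

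Finally, for the nonnegativity $\lambda_n(\sL_{T}^{q})\geq 0$ I would invoke Lemma \ref{lem:bapat-lal-pati}: for $|q|<1$ the matrix $\sL_{T}^{q}$ is positive definite, and for $|q|=1$ it is the Laplacian (or signless Laplacian) of a bipartite graph, hence positive semidefinite. This is the only step that requires the hypothesis $|q|\leq 1$; the interlacing inequalities themselves hold for all $q\in\RR$ because the identification of $\sL_{T}^{q}|l-\sL_{T'}^{q}$ as a rank-one PSD matrix uses only $q^2\geq 0$. I do not expect any real obstacle here: the content of the lemma lies entirely in spotting the rank-one update identity, after which Cauchy interlacing and Lemma \ref{rem:ev_A_B} do all of the work.
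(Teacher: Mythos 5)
Your rank-one identity $\sL_{T}^{q}|l = \sL_{T'}^{q} + q^2 e_v e_v^{t}$ is correct, and so are your two separate chains (Cauchy interlacing for the bordered matrix, Lemma \ref{rem:ev_A_B} for the degree correction). The gap is in how you splice them. Cauchy's theorem gives $\lambda_{i+1}(\sL_{T}^{q}) \geq \lambda_{i+1}(\sL_{T}^{q}|l)$, not the reverse, so the final link $\lambda_{i+1}(\sL_{T}^{q}|l) \geq \lambda_{i+1}(\sL_{T}^{q})$ in your displayed chain is unjustified (and false in general). What your two chains actually compose to is $\lambda_i(\sL_{T}^{q}) \geq \lambda_i(\sL_{T'}^{q})$ together with $\lambda_i(\sL_{T'}^{q}) \geq \lambda_{i+1}(\sL_{T}^{q}|l) \geq \lambda_{i+2}(\sL_{T}^{q})$: composing two one-step interlacings inevitably loses an index, so you only obtain $\lambda_i(\sL_{T'}^{q}) \geq \lambda_{i+2}(\sL_{T}^{q})$, one position weaker than the lemma asserts, and you do not even recover the bottom inequality $\lambda_{n-1}(\sL_{T'}^{q}) \geq \lambda_{n}(\sL_{T}^{q})$. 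This weaker statement is insufficient for the paper's later use of the lemma: the remark after Corollary \ref{cor:subtree_ev} and Corollary \ref{cor:ev_n-1_subtrees} need $\lambda_{a}(\sL_{T'}^{q}) \geq \lambda_{a}(\sL_{T}^{q})$, which is exactly the instance $i=n-2$ of the inequality your argument misses. A further warning sign is your closing remark that the interlacing holds for all $q\in\RR$: Lemma \ref{lem:q_partial_interlacing} shows that for $|q|>1$ the bottom of the chain reverses to $\lambda_{n}(\sL_{T}^{q}) \geq \lambda_{n-1}(\sL_{T'}^{q})$, so no correct proof of the full chain can be independent of the hypothesis $|q|\leq 1$.

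The paper avoids the index loss by making a \emph{single} rank-one perturbation instead of two successive steps: it writes
\begin{equation*}
\sL_{T}^{q} = \left[\begin{array}{cc} \sL_{T'}^{q} & 0 \\ 0 & 0 \end{array}\right] + zz^{t}, \qquad z=[0,0,\ldots,0,-q,1]^{t}\in\RR^{n},
\end{equation*}
so that the diagonal correction $q^{2}$ at the support vertex, the new off-diagonal entries $-q$, and the new diagonal entry $1$ all come from the one outer product $zz^{t}$. One application of Lemma \ref{rem:ev_A_B} to the zero-padded matrix $A=\sL_{T'}^{q}\oplus[0]$ then gives the full interlacing with no loss; the hypothesis $|q|\leq 1$ enters only through Lemma \ref{lem:bapat-lal-pati}, which guarantees $\sL_{T'}^{q}$ is positive semidefinite, so the appended zero is the smallest eigenvalue of $A$ and $\lambda_i(A)=\lambda_i(\sL_{T'}^{q})$ for $i\leq n-1$. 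You should replace your two-step composition by this single decomposition (or by a direct Courant--Fischer argument that exploits the joint structure); as written, the proof does not establish the stated chain.
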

\begin{proof} 
We can without loss of generality assume in $T$ that $l=n$ is the deleted leaf vertex with neighbour $n-1$. 
Let $e_{n-1}=[0,0,\hdots,0,1]^t\in \RR^{n-1}$. 
Then, 
\begin{eqnarray*}
\sL_{T}^{q} & = & \left[
\begin{array}{c c}
q^{2}e_{n-1}e^{t}_{n-1}+\sL_{T'}^{q} & -qe_{n-1}  \\
-qe^{t}_{n-1} & 1  
\end{array}
\right] = 
\left[
\begin{array}{c c}
\sL_{T'}^{q} & 0  \\
0 & 0  
\end{array}
\right] +
\left[
\begin{array}{c c}
q^{2}e_{n-1}e^{t}_{n-1} & -qe_{n-1}  \\
-qe^{t}_{n-1} & 1  
\end{array}
\right] \\
& = &
\left[
\begin{array}{c c}
\sL_{T'}^{q} & 0  \\
0 & 0  
\end{array}
\right] + zz^t, \mbox{ where }  z=[0,0,\ldots,0,-q,1]^t \in \RR^n.
\end{eqnarray*}

By Lemma \ref{lem:bapat-lal-pati},  
both  $\sL_{T}^{q}$ and 
$\sL_{T'}^{q}$  are positive semidefinite when $|q|\leq 1$.  
Therefore, all the eigenvalues of $\sL_{T}^{q}$ and $\sL_{T'}^{q}$ are non-negative.
Thus, by using Lemma \ref{rem:ev_A_B}, we have 
$\lambda_1(\sL_{T}^{q})\geq \lambda_1(\sL_{T'}^q)\geq 
\cdots \geq\lambda_{n-1}(\sL_{T'}^{q})\geq 
 \lambda_n(\sL_{T}^{q}) \geq  0$ 
 completing the proof.
\end{proof}


From Lemma \ref{lem:bapat-lal-pati}, 
$\sL_T^q$ is not  positive semidefinite  when $q\in \RR$ with $|q|>1$.
We prove the following partial interlacing lemma about the eigenvalues of  $\sL_T^q$ when $|q|>1$. 

\begin{lemma}
\label{lem:q_partial_interlacing}
Let  $T$ be a tree on $n$ vertices with a leaf vertex $l$. Let $T'=T-\{l\}$.  
Let $\sL_{T}^{q}$ and $\sL_{T'}^{q}$ be the $q$-Laplacians of 
$T$ and $T'$ respectively.
Then, for $q\in \RR$ with $|q|> 1$,   
$$\lambda_1(\sL_{T}^{q})\geq \lambda_1(\sL_{T'}^q) \geq  
\cdots \geq \lambda_{n-2}(\sL_{T'}^{q}) \geq \lambda_{n-1}(\sL_{T}^{q}) > 0 > 
\lambda_{n}(\sL_{T}^{q}) \geq \lambda_{n-1}(\sL_{T'}^{q}).$$
\end{lemma}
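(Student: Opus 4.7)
The plan is to mimic the proof of Lemma \ref{lem:q_interlacing} almost verbatim, using the same rank-one decomposition
$$\sL_T^q = \begin{bmatrix} \sL_{T'}^q & 0 \\ 0 & 0 \end{bmatrix} + zz^t, \qquad z = [0,\ldots,0,-q,1]^t \in \RR^n,$$
obtained after assuming without loss of generality that $l=n$ and its neighbour is $n-1$. Since $zz^t$ is positive semidefinite of rank one, Lemma \ref{rem:ev_A_B} applies to give the full interlacing between the spectrum of $\sL_T^q$ and that of the block-diagonal matrix $A = \mathrm{diag}(\sL_{T'}^q, 0)$. The key difference from the $|q|\leq 1$ case is that we can no longer use positive semidefiniteness to place the smallest eigenvalues; instead we must locate the extra eigenvalue $0$ coming from the zero block inside the ordered list of eigenvalues of $A$.

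This is where Lemma \ref{lem:bapat-lal-pati} does the work. When $|q|>1$, part (3) says $\sL_{T'}^q$ has exactly one negative eigenvalue, namely $\lambda_{n-1}(\sL_{T'}^q)$, and part (1) gives $\det(\sL_{T'}^q) = 1-q^2 \neq 0$, so in fact $\lambda_{n-2}(\sL_{T'}^q) > 0 > \lambda_{n-1}(\sL_{T'}^q)$. Therefore the eigenvalues of $A$ in weakly decreasing order are
$$\lambda_1(\sL_{T'}^q) \geq \cdots \geq \lambda_{n-2}(\sL_{T'}^q) \geq 0 \geq \lambda_{n-1}(\sL_{T'}^q),$$
so the extra zero sits in slot $n-1$.

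Plugging these into the interlacing inequality $\lambda_i(\sL_T^q) \geq \mu_i \geq \lambda_{i+1}(\sL_T^q)$ supplied by Lemma \ref{rem:ev_A_B} immediately yields
$$\lambda_1(\sL_T^q) \geq \lambda_1(\sL_{T'}^q) \geq \cdots \geq \lambda_{n-2}(\sL_{T'}^q) \geq \lambda_{n-1}(\sL_T^q) \geq 0 \geq \lambda_n(\sL_T^q) \geq \lambda_{n-1}(\sL_{T'}^q).$$
Finally, to promote the two middle inequalities $\lambda_{n-1}(\sL_T^q) \geq 0 \geq \lambda_n(\sL_T^q)$ to the strict form stated in the lemma, I invoke Lemma \ref{lem:bapat-lal-pati} once more for $T$: part (3) forces $\lambda_n(\sL_T^q) < 0$ (exactly one negative eigenvalue) and part (1) forces $\lambda_{n-1}(\sL_T^q) \neq 0$, hence $\lambda_{n-1}(\sL_T^q) > 0$.

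The only genuine subtlety, and what I anticipate as the main obstacle for a careful presentation, is bookkeeping the correct position of the inserted $0$ among the eigenvalues of the block matrix $A$; once Lemma \ref{lem:bapat-lal-pati} is used to pin down the sign pattern of the spectrum of $\sL_{T'}^q$, everything else is a direct application of the rank-one Weyl-type interlacing in Lemma \ref{rem:ev_A_B}.
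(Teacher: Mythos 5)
Your proposal is correct and follows essentially the same route as the paper: the same rank-one decomposition $\sL_T^q = \mathrm{diag}(\sL_{T'}^q,0) + zz^t$, the same use of Lemma \ref{lem:bapat-lal-pati} to place the inserted zero in slot $n-1$ of the spectrum of the block matrix and to upgrade the middle inequalities to strict ones, and the same application of Lemma \ref{rem:ev_A_B}. Your write-up is in fact slightly more explicit than the paper's about why the zero lands where it does.
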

\begin{proof}
As done in Lemma \ref{lem:q_interlacing}, we assume the vertex $l=n$ and 
that its neighbour is vertex $n-1$. Thus,  we obtain  $\sL_{T}^{q}=  A+B$, where 
\begin{equation*}
A= 
\left[
\begin{array}{c c}
\sL_{T'}^{q} & 0  \\
0 & 0  
\end{array}
\right] \mbox{ and }   
B=zz^t \mbox{ with }  z=[0,0,\ldots,0,-q,1]^t \in \RR^n.
\end{equation*}

By Lemma \ref{lem:bapat-lal-pati},   both $\sL_{T}^q$ and 
$\sL_{T'}^q$ have exactly one negative eigenvalue and 
both $\sL_T^q$ and $\sL_{T'}^q$ are invertible when $|q|>1$. 
Therefore, the eigenvalues of $A$  are the following:  
$\lambda_1(\sL_{T'}^q) \geq  \cdots \geq
\lambda_{n-2}(\sL_{T'}^q) >  0 > \lambda_{n-1}(\sL_{T'}^q).$
Thus, by Lemma \ref{rem:ev_A_B}, $\lambda_1(\sL_{T}^{q})\geq \lambda_1(\sL_{T'}^q) \geq  
\cdots \geq \lambda_{n-1}(\sL_{T}^{q}) > 0 >  \lambda_{n}(\sL_{T}^{q}) 
\geq \lambda_{n-1}(\sL_{T'}^{q})$ completing the proof.
\end{proof}


Let $T$ be a tree on $n$ vertices with $q$-Laplacian $\sL_T^q$. 
Let  $\lambda_{\max}(\sL_T^q)$,  $\lambda_{\min}(\sL_T^q)$ and $\lambda_{a}(\sL_T^q)=\lambda_{n-1}(\sL_T^q)$
denote the largest, the smallest and the second smallest eigenvalues of $\sL_T^q$ respectively.
We need the following corollaries of Lemmas \ref{lem:q_interlacing} and \ref{lem:q_partial_interlacing}.
\begin{corollary}
\label{cor:subtree_ev}
Let  $T$ be a tree with $n$ vertices and let $T'$ be a subtree of $T$. 
Let $\sL_{T}^{q}$ and $\sL_{T'}^{q}$ be the $q$-Laplacians of $T$ and $T'$ respectively. 
Then, for all $q\in \RR$, $\lambda_{\max}(\sL_{T'}^q)\leq \lambda_{\max}(\sL_{T}^{q}).$
\end{corollary}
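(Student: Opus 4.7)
The plan is to reduce the corollary to an iterated application of the leaf-deletion interlacing results (Lemmas \ref{lem:q_interlacing} and \ref{lem:q_partial_interlacing}). Observe that each of those two lemmas, covering $|q|\le 1$ and $|q|>1$ respectively, yields in particular the single inequality $\lambda_{\max}(\sL_{T}^q)\ge \lambda_{\max}(\sL_{T-\{l\}}^q)$ whenever $l$ is a leaf of the tree $T$. Together, they handle every $q\in\RR\setminus\{0\}$; the case $q=0$ is trivial since $\sL_T^q=I$. So the corollary follows once I can pass from $T$ to $T'$ by a sequence of leaf deletions in which, at every intermediate step, the vertex removed is a leaf of the current tree.

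The key combinatorial observation is therefore: if $T'$ is a proper subtree of $T$, then there exists a vertex $v\in V(T)\setminus V(T')$ which is a leaf of $T$. To see this, consider the forest $F$ induced by $V(T)\setminus V(T')$. Because $T'$ is connected, each connected component of $F$ is attached to $T'$ by exactly one edge; call its endpoint in $F$ the \emph{root} of that component. If some component is a single vertex, that vertex has a unique neighbour (in $T'$) and is already a leaf of $T$. Otherwise, any leaf of that component other than the root has degree $1$ in $T$, hence is a leaf of $T$. Either way, we obtain the desired $v$.

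With this, I would induct on $|V(T)|-|V(T')|$. The base case is $T=T'$, which is immediate. For the inductive step, pick a leaf $v\in V(T)\setminus V(T')$ as above, apply Lemma \ref{lem:q_interlacing} (if $|q|\le 1$) or Lemma \ref{lem:q_partial_interlacing} (if $|q|>1$) to get $\lambda_{\max}(\sL_T^q)\ge \lambda_{\max}(\sL_{T-\{v\}}^q)$, and then apply the induction hypothesis to the pair $(T-\{v\},T')$ to get $\lambda_{\max}(\sL_{T-\{v\}}^q)\ge \lambda_{\max}(\sL_{T'}^q)$. Chaining the two inequalities completes the proof.

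The only real obstacle is the combinatorial observation above, and it is mild; everything else is a direct appeal to the interlacing results already established in this section. I would keep the case split on $|q|\le 1$ versus $|q|>1$ implicit by simply invoking ``the appropriate one of Lemmas \ref{lem:q_interlacing} and \ref{lem:q_partial_interlacing},'' since both give the same upper end of the interlacing chain, which is all that is needed here.
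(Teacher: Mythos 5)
Your proof is correct and is essentially the paper's own argument: both reduce the claim to a chain of trees differing by one leaf and invoke Lemmas \ref{lem:q_interlacing} and \ref{lem:q_partial_interlacing} at each step, the only cosmetic difference being that you delete leaves of $T$ down to $T'$ while the paper grows $T'$ up to $T$ by adding leaves. Your explicit justification that a proper subtree always leaves behind a leaf of $T$ outside $T'$ is a welcome detail the paper glosses over.
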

\begin{proof}
Let $T'=T_0$ be the given subtree of $T$ and let $m$ be the number of vertices in $T_0$. 
Construct a new tree $T_1$ by adding a leaf vertex in $T_0$ 
such that $T_1$ is again a subtree of $T$.  
Thus, from Lemmas \ref{lem:q_interlacing} and  \ref{lem:q_partial_interlacing}, 
$\lambda_{\max}(\sL_{T_0}^q) \leq \lambda_{\max}(\sL_{T_1}^{q})$ for all $q\in \RR$.  
By repeating this process we get a sequence of subtrees $T'=T_0$, $T_1$,  $\ldots$, $T_{n-m}=T$ of $T$ with   
$\lambda_{\max}(\sL_{T'}^q)=\lambda_{\max}(\sL_{T_0}^q)  \leq \lambda_{\max}(\sL_{T_1}^{q}) 
\leq \cdots \leq \lambda_{\max}(\sL_{T_{n-m}}^q)=\lambda_{\max}(\sL_{T}^{q})$ 
 and hence, completing the  proof.
\end{proof}


By using similar argument as done in the proof of Corollary \ref{cor:subtree_ev}, 
we see that     $\lambda_{a}(\sL_{T'}^q)\geq \lambda_{a}(\sL_{T}^{q})$, 
where $T'$ is a subtree of $T$.
Thus, we obtain the following corollary of Lemmas \ref{lem:q_interlacing} and  
\ref{lem:q_partial_interlacing}. 

\begin{corollary}
\label{cor:ev_n-1_subtrees}
Using the notations of Theorem \ref{thm:result_by_gen_lemma}, 
let $P_k$, $H_1$ and $H_2$ be the subtrees of both $T_1$ and $T_2$. 
Then, for all $q\in \RR$, we have   $\max\left(\lambda_{a}(\sL_{T_1}^{q}),
\lambda_{a}(\sL_{T_2}^{q})\right) \leq \min \left(\lambda_{a}(\sL_{P_k}^{q}),
\lambda_{a}(\sL_{H_1}^{q}),\lambda_{a}(\sL_{H_2}^{q}) \right).$
\end{corollary}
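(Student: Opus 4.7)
The plan is to deduce the corollary directly from the subtree-monotonicity of $\lambda_a$ that is asserted in the sentence immediately preceding the corollary. First I would justify that assertion in the same style as Corollary \ref{cor:subtree_ev}. For a single leaf deletion $T' = T-\{l\}$, the interlacing inequality $\lambda_{n-2}(\sL_{T'}^q) \geq \lambda_{n-1}(\sL_T^q)$, which is exactly $\lambda_a(\sL_{T'}^q) \geq \lambda_a(\sL_T^q)$, can be read off directly from Lemma \ref{lem:q_interlacing} when $|q|\leq 1$ and from Lemma \ref{lem:q_partial_interlacing} when $|q|>1$; the case $q=0$ is trivial because $\sL_T^q=I$. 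Iterating leaf additions from $T'$ up to $T$ then gives $\lambda_a(\sL_{T'}^q) \geq \lambda_a(\sL_T^q)$ for any subtree $T'$ of $T$ and any $q\in\RR$.

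Next I would verify that each of $P_k$, $H_1$, $H_2$ is a subtree of both $T_1$ and $T_2$. This is immediate from the construction of $T_2$ given just before Remark \ref{rem:even_odd_no_vertices}: the edges of $P_k$ and of $H_1$ are left untouched by the generalized tree shift, while $H_2$ sits inside $T_2$ via the identification of its distinguished vertex $k$ with the vertex $1$ of $T_2$, as was explicitly pointed out in the text.

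With these two ingredients, applying the subtree inequality for $\lambda_a$ six times yields $\lambda_a(\sL_{T_i}^q) \leq \lambda_a(\sL_W^q)$ for every $i\in\{1,2\}$ and every $W\in\{P_k,H_1,H_2\}$. Taking the maximum over $i$ on the left and the minimum over $W$ on the right produces the claimed bound. I do not anticipate a real obstacle: the only point demanding care is checking that $H_2$ really embeds into $T_2$ as a subtree (since the operation rewires the neighbours of $k$ to $1$), and the construction makes this transparent.
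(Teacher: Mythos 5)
Your proposal is correct and follows essentially the same route as the paper, which likewise derives the subtree monotonicity $\lambda_{a}(\sL_{T'}^{q})\geq \lambda_{a}(\sL_{T}^{q})$ by iterating the leaf-deletion interlacing of Lemmas \ref{lem:q_interlacing} and \ref{lem:q_partial_interlacing} in the manner of Corollary \ref{cor:subtree_ev}, and then applies it to $P_k$, $H_1$, $H_2$ as subtrees of both $T_1$ and $T_2$. Your write-up simply makes explicit the index bookkeeping ($\lambda_{n-2}(\sL_{T'}^q)\geq\lambda_{n-1}(\sL_T^q)$) and the $q=0$ case that the paper leaves implicit.
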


\section{Auxiliary polynomial $\sF_{T}^{v}(q,x)$}
\label{sec:F_T}

Let $T$ be a tree on $n$ vertices with  $q$-Laplacian $\sL_{T}^{q}$. 
We recall the definition of the polynomial $\sF_{T}^{v}(q,x)$ defined in \eqref{eq:def_F_T^v} as follows: 
$$\sF_{T}^{v}(q,x)=f^{\sL_{T}^{q}}(q,x)-(x-1+q^2)f^{\sL_{T}^{q}|v}(q,x)
=f^{\sL_{T}^{q}}(q,x)-(x-\det(\sL_T^q))f^{\sL_{T}^{q}|v}(q,x).$$

We think of $\sF_{T}^{v}(q,x)$ as a univariate polynomial once a real value for $q$ is assigned. 
From Theorem \ref{thm:result_by_gen_lemma}, 
we recall that when a tree $T_2$ covers an another tree $T_1$ in $\GTS_n$, 
$f^{\sL_{T_1}^{q}}(q,x)- f^{\sL_{T_2}^{q}}(q,x)$ is a product of three auxiliary 
polynomials of subtrees $P_k$, $H_1$, and $H_2$ of $T_1$ and $T_2$. 
Therefore the roots of this difference polynomial is the multiset  
union of the roots of auxiliary polynomials of these subtrees $P_k$, $H_1$, and $H_2$. 
Thus, to prove Theorem \ref{thm:main_result}, we need to determine 
the location of all these roots and decide the sign of $f^{\sL_{T_1}^{q}}(q,x)- f^{\sL_{T_2}^{q}}(q,x)$ 
for a fixed $q\in \RR\backslash\{0\}$ and 
when $x\in [(-\infty,t_1)\cup (t_2,\infty)]-\{0\}$, 
where $t_1=\max(\lambda_{a}(\sL_{T_1}^q),\lambda_{a}(\sL_{T_2}^q))$ and 
$t_2=\min(\lambda_{\max}(\sL_{T_1}^q),\lambda_{\max}(\sL_{T_2}^q))$.  
\begin{lemma}
	\label{lem:deg_aux_poly}
	For a tree $T$ on $n>1$ vertices with $q$-Laplacian $\sL_T^q$, 
	the degree of $\sF_{T}^{v}(q,x)$  is $n-1$ and the coefficient of $x^{n-1}$ in $\sF_{T}^{v}(q,x)$ is $-q^2d_v$. 
	Further, for all $q\in \RR\backslash \{0\}$, zero is a root of $\sF_{T}^{v}(q,x)$.
\end{lemma}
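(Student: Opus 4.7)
The statement splits into two claims: one about the top-degree structure of $\sF_T^v(q,x)$ as a polynomial in $x$, and one about $x=0$ being a root.

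For the degree claim, I plan to use the standard expansion of characteristic polynomials by traces. Since $\sL_T^q$ is $n \times n$, we have $f^{\sL_T^q}(q,x) = x^n - \Tr(\sL_T^q)\,x^{n-1} + \cdots$ and $f^{\sL_T^q|v}(q,x) = x^{n-1} - \Tr(\sL_T^q|v)\,x^{n-2} + \cdots$, so the $x^n$ contributions to $\sF_T^v(q,x)$ cancel immediately. Extracting the coefficient of $x^{n-1}$ gives $\Tr(\sL_T^q|v) - \Tr(\sL_T^q) - (q^2 - 1)$, and the trace difference $\Tr(\sL_T^q) - \Tr(\sL_T^q|v)$ is exactly the $(v,v)$-entry of $\sL_T^q$, namely $1 + q^2(d_v - 1)$. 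Substituting collapses the expression to $-q^2 d_v$, which is nonzero whenever $q \neq 0$, since $d_v \geq 1$ in any tree on more than one vertex. This simultaneously pins down the degree and the leading coefficient.

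For the root at $x = 0$, I plan to substitute directly:
$$\sF_T^v(q,0) = \det(-\sL_T^q) - (q^2-1)\det(-\sL_T^q|v) = (-1)^n\bigl[\det(\sL_T^q) - (1-q^2)\det(\sL_T^q|v)\bigr].$$
Since Lemma \ref{lem:bapat-lal-pati} gives $\det(\sL_T^q) = 1 - q^2$, the assertion reduces to showing that $\det(\sL_T^q|v) = 1$ for every vertex $v$ of every tree $T$.

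I expect this determinant identity to be the main obstacle. My plan is to prove it by induction on $n$, with the $n = 1$ case being the empty-matrix convention. For the inductive step, I pick any leaf $l \neq v$ (which exists once $n \geq 2$) with neighbor $u$. The row of $l$ within $\sL_T^q|v$ carries a $1$ on the diagonal, a $-q$ in column $u$ if $u \neq v$, and zeros elsewhere. When $u \neq v$ I apply the column operation ``add $q$ times column $l$ to column $u$''; this kills the $-q$ in row $l$ and updates the $(u,u)$-entry from $1 + q^2(d_u^T - 1)$ to $1 + q^2(d_u^{T-l} - 1)$, after which expansion along row $l$ produces precisely $\sL_{T-l}^q|v$. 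When $u = v$ the $-q$ already sits in the deleted column, so row $l$ is ready for expansion; a quick comparison of diagonal entries (removal of $l$ changes only the degree of $u = v$, which is absent from the index set) again identifies the resulting submatrix with $\sL_{T-l}^q|v$. The induction hypothesis then closes the argument in both cases.
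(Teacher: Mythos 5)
Your proof is correct, but it reaches both conclusions by a different route than the paper. For the leading coefficient, the paper does not use traces at all: it splits the column of $v$ in $xI-\sL_T^q$ by multilinearity, writing $x-1-q^2(d_v-1)=(x-1+q^2)+(-q^2d_v)$, which exhibits $\sF_T^v(q,x)$ directly as the determinant of a matrix whose $(v,v)$-entry is the constant $-q^2d_v$ and whose remaining block is $xI-\sL_T^q|v$; the degree and leading coefficient are then read off from that determinant. Your trace computation $\Tr(\sL_T^q|v)-\Tr(\sL_T^q)-(q^2-1)=-q^2d_v$ is an equally valid and arguably more routine way to get the same answer. For the root at $x=0$, the paper simply cites $f^{\sL_T^q|v}(q,0)=(-1)^{n-1}$ from an earlier Nagar--Sivasubramanian paper and combines it with $\det(\sL_T^q)=1-q^2$; you instead prove the equivalent identity $\det(\sL_T^q|v)=1$ from scratch by leaf-deletion induction, and your induction is sound (the column operation correctly absorbs the $-q$ in the leaf row and adjusts the neighbour's diagonal entry to its degree in $T-l$, and the $u=v$ case is handled correctly since $v$ is absent from the index set). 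What your approach buys is self-containedness — the only external input is $\det(\sL_T^q)=1-q^2$ from Lemma \ref{lem:bapat-lal-pati} — at the cost of a longer argument; what the paper's determinant-splitting buys is that the same identity \eqref{eqn:q-Lapl_poly} packages $\sF_T^v$ as a single determinant, which is conceptually closer to how the auxiliary polynomial is used later.
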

\begin{proof} 
	Without loss of generality we assume that the first row of $\sL_{T}^{q}$ is indexed by the vertex $v$. 
	Let $\alpha=(\alpha_i)_{2\leq i \leq n}$ be a column vector such that $\alpha_i=q$ if $v$ is adjacent to the vertex $i$ and $\alpha_i=0$ otherwise. 
	Therefore 
	\begin{eqnarray}
	f^{\sL_{T}^{q}}(q,x) & = &  
	\det \left[
	\begin{array}{c c}
	x-1-q^2(d_v-1) & \alpha^t \\
	\alpha & xI-\sL_{T}^{q}|v 
	\end{array}
	\right]   \nonumber \\  
	& = & 
	\det\left[
	\begin{array}{c c}
	x-1+q^2 & \alpha^t \\
	0 & xI-\sL_{T}^{q}|v 
	\end{array}
	\right]
	+
	\det\left[
	\begin{array}{c c}
	-q^2d_v & \alpha^t \\
	\alpha & xI-\sL_{T}^{q}|v 
	\end{array}
	\right]   \nonumber \\
	& = & 
	(x-1+q^2)f^{\sL_{T}^{q}|v}(q,x) + 
	\det\left[
	\begin{array}{c c}
	-q^2d_v & \alpha^t \\
	\alpha & xI-\sL_{T}^{q}|v 
	\end{array}
	\right]
	\label{eqn:q-Lapl_poly}
	\end{eqnarray}
	
	From  the definition of $\sF_{T}^{v}(q,x)$ and by using  \eqref{eqn:q-Lapl_poly}, 
	it is easy to see that the degree of the polynomial $\sF_{T}^{v}(q,x)$ in the variable $x$ is $n-1$ 
	and the coefficient of $x^{n-1}$ in $\sF_{T}^{v}(q,x)$ is $-q^2d_v$. 
	
	Nagar and Sivasubramanian \cite[Remark 13]{mukesh-siva-immanantal_polynomial}  proved that  
	$f^{\sL_{T}^{q}|v}(q,0)=(-1)^{n-1}$.  Thus, when $x=0$ from  Lemma \ref{lem:bapat-lal-pati},  
	we get the following. 
	$$\sF_{T}^{v}(q,0)=f^{\sL_{T}^{q}}(q,0) -(q^2 -1)f^{\sL_{T}^{q}|v}(q,0)
	=(-1)^n\det(\sL_T^q)-(q^2-1)(-1)^{n-1}=0.$$ 
	Thus, zero is a root of $\sF_{T}^{v}(q,x)$ and hence the proof is complete.
\end{proof} 

\vspace{2mm} 

For a fixed $q\in \RR\backslash \{0\} $, from Lemma \ref{lem:deg_aux_poly}, 
it is easy to see that for large positive $x$ the function $\sF_{T}^{v}(q,x)$ is negative. 
When $|q|\leq 1$, we next prove that the multiplicity of zero as a root of 
$\sF_{T}^{v}(q,x)$ is one and determine the location of its $n-2$ non-zero roots. 
Let the eigenvalues of $\sL_{T}^{q}|v$ be  
$\lambda_1(\sL_{T}^q|v)\geq \lambda_2(\sL_{T}^q|v)\geq  \cdots \geq \lambda_{n-1}(\sL_{T}^q|v)$. 
Let $\sigma(\sL_{T}^q|v)=\{\lambda_1(\sL_{T}^q|v), \ldots, \lambda_{n-1}(\sL_{T}^q|v) \}$ 
be the multiset of eigenvalues of  $\sL_T^q|v$.
From \cite[Corollary 12]{mukesh-siva-immanantal_polynomial}, 
it is simple to see that all the eigenvalues of $\sL_{T}^{q}|v$ are non-negative. 
Let $\sigma(\sL_{T}^q)=\{\lambda_1(\sL_{T}^q), \ldots, \lambda_{n-1}(\sL_{T}^q) \}$ 
be the multisets of the $n-1$ largest eigenvalues of $\sL_T^q$.  
Motivated  by  Csikv{\'a}ri \cite[ the third part of Theorem 7.3]{csikvari-poset2}  
when $q\in \RR$ with $|q|\leq 1$, we obtain the following result. 
\begin{lemma}
	\label{lem1:F_T_q|<1}
	Let $T$ be a tree on $n>1$ vertices and 
	let $v\in T$.  
	Then,  for a fixed $q\in \RR\backslash \{0\} $ with $|q|\leq 1$, 
	each  non-zero root of $\sF_{T}^{v}(q,x)$ lies  in the interval   
	$[\lambda_{a}(\sL_{T}^{q}),\lambda_{\max}(\sL_{T}^{q})]$. 
\end{lemma}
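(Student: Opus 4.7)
The plan is to substitute each eigenvalue $\lambda_k = \lambda_k(\sL_T^q)$ into the defining relation $\sF_T^v(q,x)=f^{\sL_T^q}(q,x)-(x-\alpha)\,f^{\sL_T^q|v}(q,x)$, where $\alpha=1-q^2=\det(\sL_T^q)$. Since $f^{\sL_T^q}(q,\lambda_k)=0$, this yields the clean identity
\[
\sF_T^v(q,\lambda_k) \;=\; -(\lambda_k-\alpha)\,f^{\sL_T^q|v}(q,\lambda_k).
\]
By Lemma \ref{lem:deg_aux_poly} the polynomial $\sF_T^v(q,\cdot)$ has degree $n-1$ and vanishes at $x=0$, so $g(x):=\sF_T^v(q,x)/x$ is a polynomial of degree $n-2$ and my goal reduces to placing all its roots in $[\lambda_a,\lambda_{\max}]$. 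If I can show that $\sgn(g(\lambda_k))$ alternates as $k$ runs over $1,\dots,n-1$, the Intermediate Value Theorem supplies a root of $g$ in each of the $n-2$ consecutive open intervals $(\lambda_{k+1},\lambda_k)$, which accounts for all roots of $g$ and places them in $[\lambda_{n-1},\lambda_1]$.

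To extract the sign data I would use Cauchy interlacing: since $\sL_T^q|v$ is a principal submatrix of $\sL_T^q$, its eigenvalues $\mu_1\geq\cdots\geq\mu_{n-1}$ interlace the $\lambda_i$, so expanding $f^{\sL_T^q|v}(q,\lambda_k)=\prod_j(\lambda_k-\mu_j)$ gives $\sgn\bigl(f^{\sL_T^q|v}(q,\lambda_k)\bigr)=(-1)^{k-1}$ in the generic strict-interlacing case (coincidences are absorbed by the observation that the common value is itself a root of $g$ lying in $[\lambda_a,\lambda_{\max}]$). Because $|q|\leq 1$ forces $\sL_T^q$ to be positive semidefinite by Lemma \ref{lem:bapat-lal-pati} and Lemma \ref{lem:mult_small_ev} makes $\lambda_{\min}$ simple, we have $\lambda_k>0$ for $k\leq n-1$, whence
\[
\sgn\bigl(g(\lambda_k)\bigr) \;=\; (-1)^{k}\,\sgn(\lambda_k-\alpha),\qquad k=1,\dots,n-1.
\]
The desired alternation across $\lambda_1>\lambda_2>\cdots>\lambda_{n-1}$ then reduces to showing that $\sgn(\lambda_k-\alpha)$ is constant on this range, and since $\lambda_1\geq 1>1-q^2$ is immediate from $(\sL_T^q)_{vv}\geq 1$ via Rayleigh, the whole argument hinges on the single lower bound $\lambda_a(\sL_T^q)\geq 1-q^2$.

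The hard part is therefore the spectral inequality $\lambda_a(\sL_T^q)\geq 1-q^2$. My intended route is to exploit the factorization
\[
\sL_T^q-(1-q^2)I \;=\; q^2 D-qA \;=\; q\,D^{1/2}\bigl(qI-\widetilde A\bigr)D^{1/2},
\]
where $\widetilde A=D^{-1/2}AD^{-1/2}$ is the normalized adjacency of $T$, and to apply Sylvester's law of inertia: the bound becomes equivalent to $q(qI-\widetilde A)$ having at most one negative eigenvalue. For a tree, $\widetilde A$ is bipartite with simple extremal eigenvalues $\pm 1$, and the boundary cases $q=\pm 1$ collapse to the classical Laplacian and signless Laplacian, each positive semidefinite with one-dimensional kernel, which gives the base step. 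For $0<|q|<1$ I would try to propagate the bound either through Perron--Frobenius applied to the entrywise-positive matrix $\ED_T=(1-q^2)(\sL_T^q)^{-1}$ (the route used to prove Lemma \ref{lem:mult_small_ev}, which there already separates $\lambda_{\min}$ from the remaining spectrum) or through a continuity argument in $q$ anchored at $q=1$, exploiting the tree-specific simplicity of the extremal eigenvalues $\pm 1$ of $\widetilde A$. Once this inequality is in hand, the sign-change count above finishes the proof immediately.
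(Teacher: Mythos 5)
Your reduction to the alternation of $\sgn\bigl(g(\lambda_k)\bigr)=(-1)^k\,\sgn(\lambda_k-(1-q^2))$ is set up correctly, but the inequality you identify as the crux, $\lambda_{a}(\sL_T^q)\geq 1-q^2$, is false, and the paper's own data already refutes it: in Table \ref{tab:min_max_eig_GTS_6}, for $q=0.1$ one has $\lambda_{a}(\sL_{T_1}^{q})=0.8890<0.99=1-q^2$. Your own inertia reformulation shows why: via $\sL_T^q-(1-q^2)I=q\,D^{1/2}(qI-\widetilde A)D^{1/2}$, the bound would require $q(qI-\widetilde A)$ to have at most one negative eigenvalue, i.e.\ (for $0<q<1$) that at most one eigenvalue of the normalized adjacency $\widetilde A$ exceeds $q$. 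For $P_4$ the spectrum of $\widetilde A$ is $\{\pm 1,\pm\tfrac12\}$, so already at $q=0.1$ two eigenvalues exceed $q$. The bipartite/Perron structure only controls the two extremal eigenvalues $\pm 1$ of $\widetilde A$; it says nothing about the interior ones, so neither the Perron--Frobenius route through $\ED_T$ nor a continuity argument anchored at $q=1$ can rescue the claim. Consequently $1-q^2$ generally sits strictly inside the interval $(\lambda_{a}(\sL_T^q),\lambda_{\max}(\sL_T^q))$, say between $\lambda_{i+1}$ and $\lambda_i$; there $\sgn(\lambda_k-(1-q^2))$ flips, your sign sequence $\sgn(g(\lambda_k))$ fails to alternate between $k=i$ and $k=i+1$, and the IVT count yields only $n-3$ of the $n-2$ roots of $g$, with the remaining real root not localized at all.

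The paper's proof avoids this by needing only the one-sided bound $1-q^2\leq\lambda_{\max}(\sL_T^q)$, obtained from the trace, and by evaluating $\sF_T^v(q,\cdot)$ at \emph{additional} test points: the eigenvalues $\lambda_j(\sL_T^q|v)$ of the principal submatrix and the point $1-q^2$ itself. At all of these the second term of $\sF_T^v$ drops out or the first does, so $\sF_T^v$ equals $f^{\sL_T^q}$ there, whose sign is constant on each gap $(\lambda_{j+1}(\sL_T^q),\lambda_j(\sL_T^q))$; this is exactly what repairs the sign pattern across the one exceptional interval containing $1-q^2$ and recovers the missing root inside $[\lambda_{a}(\sL_T^q),\lambda_{\max}(\sL_T^q)]$. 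To fix your argument you would need to adopt this finer interleaving of test points (or some equivalent device) rather than the false spectral lower bound on $\lambda_{a}$.
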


\begin{proof}     
	Using the Interlacing Theorem for eigenvalues of symmetric matrices 
	(see Godsil and Royle \cite[page 193]{godsil-royle-book}),  we get the following. 
	\begin{equation}
	\label{eqn:interlac_symm}
	\lambda_1(\sL_{T}^q)\geq \lambda_1(\sL_{T}^q|v) \geq  \cdots \geq \lambda_{i}(\sL_{T}^q) \geq 
	\lambda_i(\sL_{T}^q|v)  \geq \cdots \geq \lambda_{n-1}(\sL_{T}^q|v) \geq \lambda_n(\sL_{T}^q).
	\end{equation}
	
	We break the proof into two cases when $1-q^2 \geq \lambda_{n-1}(\sL_{T}^q)$ and 
	when $1-q^2 < \lambda_{n-1}(\sL_{T}^q).$ 
	For both the cases, the proof is identical. 
	Thus, we only consider the case when  $1-q^2 \geq \lambda_{n-1}(\sL_{T}^q)$.  

	 Firstly we assume that 	 $\sigma(\sL_{T}^q) \Cap \left[\sigma(\sL_{T}^q|v)\Cup\{1-q^2\}\right]=\emptyset$, 
	 where $\Cap$ and $\Cup$ denote the multiset intersection and multiset union respectively.  
	Therefore from \eqref{eqn:interlac_symm}, 
	the multiplicity of each eigenvalue of $\sL_{T}^q$ and $\sL_{T}^q|v$ is one and we get 
	\begin{equation}
	\label{eqn:interlac_ev}
	\lambda_1(\sL_{T}^q)> \lambda_1(\sL_{T}^q|v)  > \cdots 
	>\lambda_{n-1}(\sL_{T}^q) > \lambda_{n-1}(\sL_{T}^q|v) \mbox{ with } 1-q^2 > \lambda_{n-1}(\sL_{T}^q).
	\end{equation}
	
	It is easy to see that 
	\begin{eqnarray}
	\lambda_{\max}(\sL_{T}^q) & \geq  & \frac{\sum_{i=0}^{n}\lambda_i(\sL_{T}^q)}{n}  
	= \frac{\Tr(\sL_{T}^q)}{n}  =
	\frac{\sum_{v\in T} 1+q^2(d_v-1)}{n}=\frac{n+q^2(n-2)}{n} 
	\nonumber \\
	& \geq & 1-q^2. 
	\label{eqn:max_ev_det}
	\end{eqnarray}
	Thus, from \eqref{eqn:interlac_ev} and \eqref{eqn:max_ev_det},  
	$1-q^2$
	is sandwiched between two eigenvalues of $\sL_T^q$. 
	Thus,  for some $i$ with $1 \leq i \leq n-2$, we get 
	 $$\lambda_{i}(\sL_{T}^q) > \lambda_{i}(\sL_{T}^q|v) > \lambda_{i+1}(\sL_{T}^q) \mbox{ with }   
	\lambda_{i}(\sL_{T}^q) > m_1 \geq m_2 > \lambda_{i+1}(\sL_{T}^q),$$ 
	where $m_1=\max \left(1-q^2, \lambda_{i}(\sL_{T}^q|v) \right)$ and 
	$m_2=\min \left(1-q^2, \lambda_{i}(\sL_{T}^q|v) \right)$. 
	Recall for a fixed $q\in \RR\backslash \{0\}$,  
	the polynomial $\sF_T^v(q,x)$ is univariate in the variable $x$. 
	Thus, by the intermediate value theorem (IVT henceforth), 
	as both the quantities $\sF_T^v(q,m_1)=f^{\sL_T^q}(q,m_1)$ and 
	$\sF_T^v(q,m_2)=f^{\sL_T^q}(q,m_2)$ have same sign,  either both are  positive or both are negative.
	
	From Lemma \ref{lem:deg_aux_poly}, we recall that for large positive 
	$x \in \RR$, $\sF_{T}^{v}(q,x)$ is negative. 
	 When $1<i$,    $\lambda_1(\sL_{T}^q)>\lambda_1(\sL_{T}^q|v)>m_1$ and 
	 the coefficient of $x^{n-1}$ in $f^{\sL_T^q|v}(q,x)$ is positive.  
	 Therefore  $\sF_{T}^{v}(q,\lambda_1(\sL_{T}^q)) 
	 = -(\lambda_1(\sL_{T}^q)-1+q^2)f^{\sL_{T}^{q}|v}(q,\lambda_1(\sL_{T}^q))$ is negative. 
	Similarly  when $2<i$, the coefficient of $x^{n}$ in $f^{\sL_T^q}(q,x)$ is positive and  $\lambda_1(\sL_{T}^q)>\lambda_1(\sL_{T}^q|v)>\lambda_2(\sL_{T}^q)>m_1$. Thus, by the IVT,  $\sF_{T}^{v}(q,\lambda_1(\sL_{T}^q|v))=f^{\sL_T^q}(q,\lambda_1(\sL_{T}^q|v))$ is again negative.   
	By using identical arguments when $x \in \{
	 \lambda_2(\sL_{T}^q),\lambda_2(\sL_{T}^q|v)\}$ with $\lambda_2(\sL_{T}^q|v)>m_1$, 
	 $\sF_{T}^{v}(q,x)$ is positive. 
	 $\sF_{T}^{v}(q,x)$ is negative if $j$ is odd and positive if $j$ is even. 
	 But when $j=i$, both the quantities $\sF_T^v(q,m_1)$ and $\sF_T^v(q,m_2)$ have the same sign. 
	Therefore, it is easy to extend this for all $j$ with $1\leq j\leq n-1$. 
	Thus, when $x \in  \left\{\lambda_j(\sL_{T}^q), \lambda_{j}(\sL_{T}^q|v)\right\}$ $\sF_{T}^{v}(q,x)$ 
	is negative if $j$ is odd and positive if $j$ is even. 
	Hence, there must be a root of $\sF_{T}^{v}(q,x)$ in the interval  
	$[\lambda_{j+1}(\sL_{T}^q),\lambda_j(\sL_{T}^q|v)]$ for  $j=1,2,\ldots,n-2$. 
	See Example \ref{eg:eg_graphs} for better interpretation.

	Secondly we assume that 
	$\sigma(\sL_{T}^q) \Cap \left[\sigma(\sL_{T}^q|v)\Cup \{1-q^2\}\right]
	=\{ \lambda_1,\lambda_2, \ldots,\lambda_c\}$. 
	Clearly  $\lambda_1$, $\lambda_2$, $\ldots$, $\lambda_c$ 
	are the roots of $\sF_{T}^{v}(q,x)$ and 
	these lie in the interval $[\lambda_{a}(\sL_{T}^q),\lambda_{\max}(\sL_{T}^q)]$. 
	From \eqref{eqn:interlac_symm}, it is clear that the multiplicity of 
	each eigenvalue not containing in  $\{ \lambda_1, \ldots,\lambda_c\}$ of $\sL_{T}^q$ 
	and $\sL_{T}^q|v$ is one. 
	Therefore the remaining eigenvalues of $\sL_{T}^q$ and 
	$\sL_{T}^q|v$ satisfy an identical relation as given in \eqref{eqn:interlac_ev}. 
	Thus, by using similar arguments as done in the above paragraph, 
	it is easy to determine the location of the remaining $n-c-2$ roots of 
	$\sF_{T}^{v}(q,x)$ by locating the roots of the following polynomial. 
	$$\frac{\sF_{T}^{v}(q,x)}{\prod_{i=1}^{c}(x-\lambda_i)}
	=\frac{f^{\sL_{T}^{q}}(q,x)}{\prod_{i=1}^{c}(x-\lambda_i)}
	-\frac{(x-1+q^2)f^{\sL_{T}^{q}|v}(q,x)}{\prod_{i=1}^{c}(x-\lambda_i)}.$$ 
		
Thus, for $q\in \RR\backslash\{0\}$ with $ |q|\leq 1$, 
all the non-zero roots of $\sF_{T}^{v}(q,x)$ lie in the interval 
$[\lambda_{a}(\sL_{T}^q),\lambda_{\max}(\sL_{T}^q)]$. 
The proof is complete. 
\end{proof}

\vspace{2mm}
When $q\in \RR$ with $|q|>1$, the proof of the following lemma is identical to the proof of Lemma \ref{lem1:F_T_q|<1}.
\begin{lemma}
	\label{lem2:F_T_q|>1}
	Let $T$ be a tree on $n>1$ vertices and 
	let $v\in T$.  
	Then,  for all $ q\in \RR$ with $|q|>1$, each non-zero roots of $\sF_{T}^{v}(q,x)$  lies in the interval   $[\lambda_{a}(\sL_{T}^{q}),\lambda_{\max}(\sL_{T}^{q})]$. 
\end{lemma}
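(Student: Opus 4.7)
The plan is to transport the proof of Lemma \ref{lem1:F_T_q|<1} almost verbatim, observing that the $|q|>1$ regime places us precisely in the subcase $1-q^2 < \lambda_{n-1}(\sL_T^q)$ that the earlier proof already declared to be identical to the written-out case $1-q^2 \ge \lambda_{n-1}(\sL_T^q)$. The only structural facts used about $\sL_T^q$ there are: it is symmetric, so that the Cauchy interlacing theorem applies to its principal submatrix $\sL_T^q|v$; $\lambda_{n-1}(\sL_T^q) > 0$, so that the target interval $[\lambda_a(\sL_T^q), \lambda_{\max}(\sL_T^q)]$ is non-degenerate and strictly positive; all eigenvalues of $\sL_T^q|v$ are non-negative; and the scalar $1-q^2 = \det(\sL_T^q)$ admits a known location relative to the two spectra. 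The first three continue to hold for $|q|>1$ by Lemma \ref{lem:bapat-lal-pati} and the cited Corollary 12 of \cite{mukesh-siva-immanantal_polynomial}. Only the fourth changes: here $1-q^2 < 0$ lies strictly below $\lambda_{n-1}(\sL_T^q)$ and below every eigenvalue of $\sL_T^q|v$.

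First I would invoke Cauchy interlacing to obtain $\lambda_1(\sL_T^q) \ge \lambda_1(\sL_T^q|v) \ge \cdots \ge \lambda_{n-1}(\sL_T^q|v) \ge \lambda_n(\sL_T^q)$, then split into the generic subcase $\sigma(\sL_T^q) \Cap [\sigma(\sL_T^q|v) \Cup \{1-q^2\}] = \emptyset$ and its complement. In the generic subcase all interlacing inequalities become strict, and since $x-1+q^2 > 0$ for every $x \ge \lambda_{n-1}(\sL_T^q)$ (because $1-q^2 < 0 < \lambda_{n-1}(\sL_T^q)$), the evaluations $\sF_T^v(q,\lambda_i(\sL_T^q)) = -(\lambda_i(\sL_T^q)-1+q^2)\,f^{\sL_T^q|v}(q,\lambda_i(\sL_T^q))$ and $\sF_T^v(q,\lambda_j(\sL_T^q|v)) = f^{\sL_T^q}(q,\lambda_j(\sL_T^q|v))$ alternate in sign along the interlacing chain, exactly as in Lemma \ref{lem1:F_T_q|<1}. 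An IVT argument then produces one simple root of $\sF_T^v(q,\cdot)$ inside each interval $[\lambda_{j+1}(\sL_T^q),\lambda_j(\sL_T^q|v)]$ for $j=1,\ldots,n-2$, yielding $n-2$ non-zero roots, all contained in $[\lambda_a(\sL_T^q), \lambda_{\max}(\sL_T^q)]$. Since $\sF_T^v(q,x)$ has degree $n-1$ and vanishes at $0$ by Lemma \ref{lem:deg_aux_poly}, these account for every non-zero root.

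The non-generic subcase is handled exactly as at the end of Lemma \ref{lem1:F_T_q|<1}: each $\lambda \in \sigma(\sL_T^q) \Cap [\sigma(\sL_T^q|v) \Cup \{1-q^2\}]$ is automatically a root of $\sF_T^v(q,\cdot)$ lying inside $[\lambda_a(\sL_T^q), \lambda_{\max}(\sL_T^q)]$, and dividing those linear factors out of $\sF_T^v(q,x)$ reduces the remaining analysis to the generic subcase. The step I expect to require the most care is confirming that the sign-alternation of $\sF_T^v$ at the interlaced eigenvalues is genuinely the same as in the $|q|\le 1$ proof despite the different position of $1-q^2$; but because the only analytic input needed is the positivity of $x-1+q^2$ on $[\lambda_{n-1}(\sL_T^q), \lambda_{\max}(\sL_T^q)]$, which for $|q|>1$ is immediate rather than case-dependent, the sign bookkeeping is in fact cleaner here than in Lemma \ref{lem1:F_T_q|<1}, and the conclusion follows.
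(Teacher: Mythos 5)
Your proposal is correct and follows essentially the same route as the paper: the paper's own proof likewise records that for $|q|>1$ one has $\lambda_n(\sL_T^q)<0<\lambda_{n-1}(\sL_T^q)$, $1-q^2<0$, and $\lambda_{n-1}(\sL_T^q|v)\ge 0$, so that both $1-q^2$ and $\lambda_n(\sL_T^q)$ sit below the entire interlacing chain, and then defers to the sign-alternation/IVT argument of Lemma \ref{lem1:F_T_q|<1}. Your observation that $x-1+q^2>0$ throughout $[\lambda_{a}(\sL_T^q),\lambda_{\max}(\sL_T^q)]$ makes the bookkeeping cleaner is accurate and is exactly why the paper can treat this case as "identical arguments."
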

\begin{proof}
	For a fixed $q \in \RR$ with $|q|>1$ from Lemma \ref{lem:bapat-lal-pati},  
	$\lambda_{n}(\sL_{T}^q)<0$, $\lambda_{n-1}(\sL_{T}^q)>0$ and  $\det(\sL_T^q)=1-q^2<0$. 
	We recall that  $\lambda_{n-1}(\sL_T^q|v)\geq 0$. 
	Therefore, the interlacing theorem for eigenvalues of symmetric matrices gives the following. 
	$$\lambda_1(\sL_{T}^q)\geq \lambda_1(\sL_{T}^q|v) \geq  
	\cdots \geq 
	\lambda_{n-1}(\sL_{T}^q|v) > \max\left(1-q^2, \lambda_n(\sL_{T}^q)\right) \
	geq  \min \left(1-q^2, \lambda_n(\sL_{T}^q)\right).$$ 
	
	
	By using identical arguments as done in the proof of Lemma \ref{lem1:F_T_q|<1}, 
	all the non-zero roots of   $\sF_{T}^{v}(q,x)$ lie 	in the interval 
	$[\lambda_{a}(\sL_{T}^q),\lambda_{\max}(\sL_{T}^q|v)]$ for all $q\in \RR$ with $|q|>1$. 
	The proof is complete.  
\end{proof}


The following example illustrates Lemmas \ref{lem1:F_T_q|<1} and \ref{lem2:F_T_q|>1}.

\begin{example}
	\label{eg:eg_graphs}
	Let $T_1$ be the tree given in Figure   \ref{fig:min_max_eig_GTS_6}.   
	For all 
	$x\in [\lambda_{\min}(\sL_{T_1}^{q}), \lambda_{\max}(\sL_{T_1}^{q})]$ with $q=0.5$ 
	and $q=1.5$ some values of      
	$f^{\sL_{T_1}^{q}}(q,x)$, $f^{\sL_{T_1}^{q}|1}(q,x)$ and $\sF_{T_1}^{1}(q,x)$ 
	are drawn
	in Figures \ref{fig:graph_q<1} and  \ref{fig:graph_q>1} respectively. 
	$1$ is an eigenvalue of both the matrices $\sL_{T_1}^q$ and $\sL_{T_1}^q|1$. 
	Thus, $1$ is a root of $\sF_{T_1}^{1}(q,x)$. 
	Here, the solid red, dotted blue  and thick solid  black lines  are used for    
	$f^{\sL_{T_1}^{q}}(q,x)$, $f^{\sL_{T_1}^{q}|1}(q,x)$ and $\sF_T^{1}(q,x)$ respectively. 
	Arrows on lines are used for the behaviour of these polynomials 
	when $x$ decreases from   $\lambda_{\max}(\sL_{T_1}^{q})$ to $\lambda_{\min}(\sL_{T_1}^{q}).$
	These polynomials were drawn by using the computer package SageMath.
	
	\begin{figure}[h]
		\begin{minipage}[b]{0.45\linewidth}
			\centering
			\includegraphics[scale=.32]{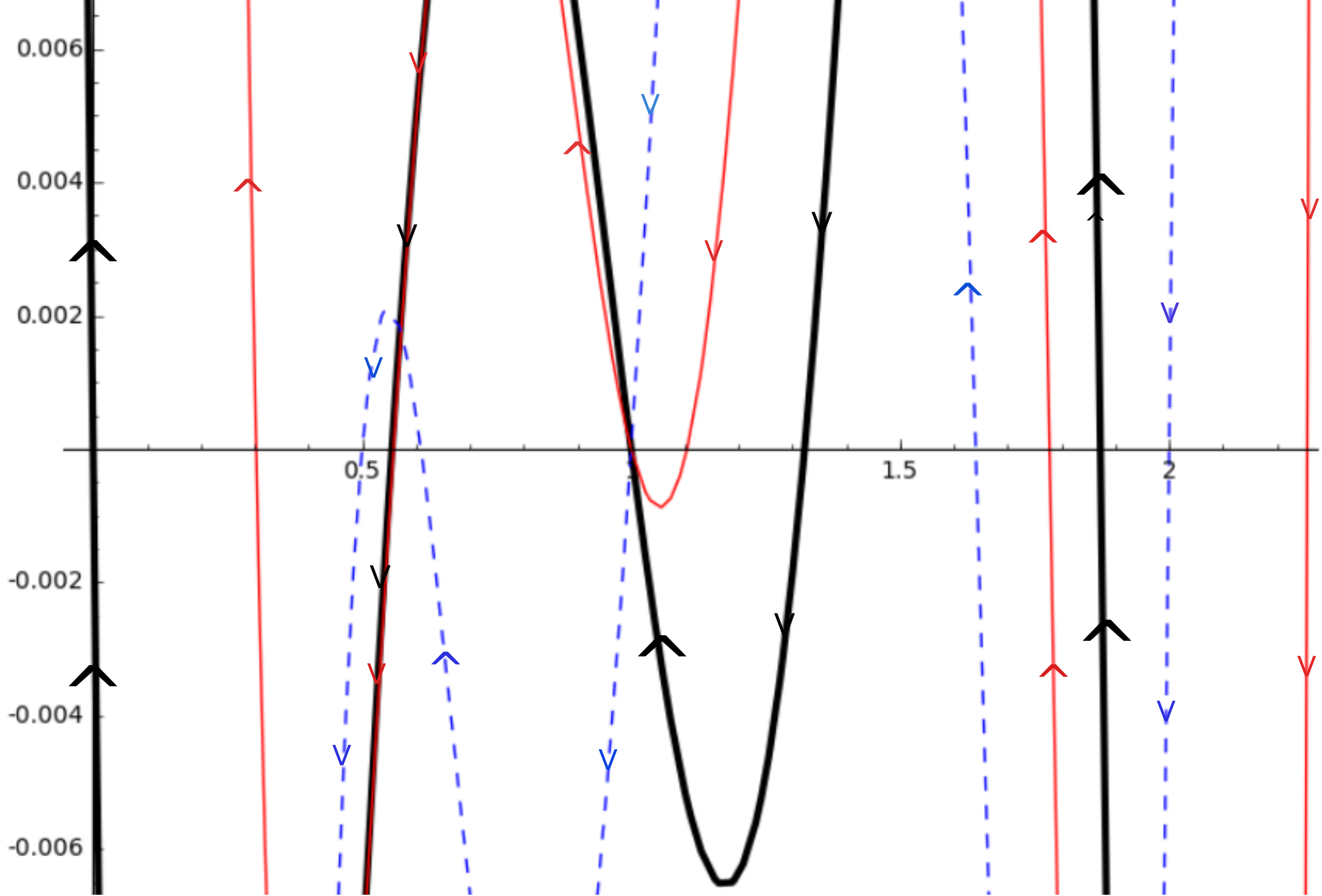}
			\caption{The values of      
				$f^{\sL_{T_1}^{q}}(0.5,x)$, $f^{\sL_{T_1}^{q}|1}(0.5,x)$ and $\sF_{T_1}^{1}(0.5,x)$.}
			\label{fig:graph_q<1}
		\end{minipage}
		\hspace{1cm}
		\begin{minipage}[b]{0.45\linewidth}
			\centering
			\includegraphics[scale=.3]{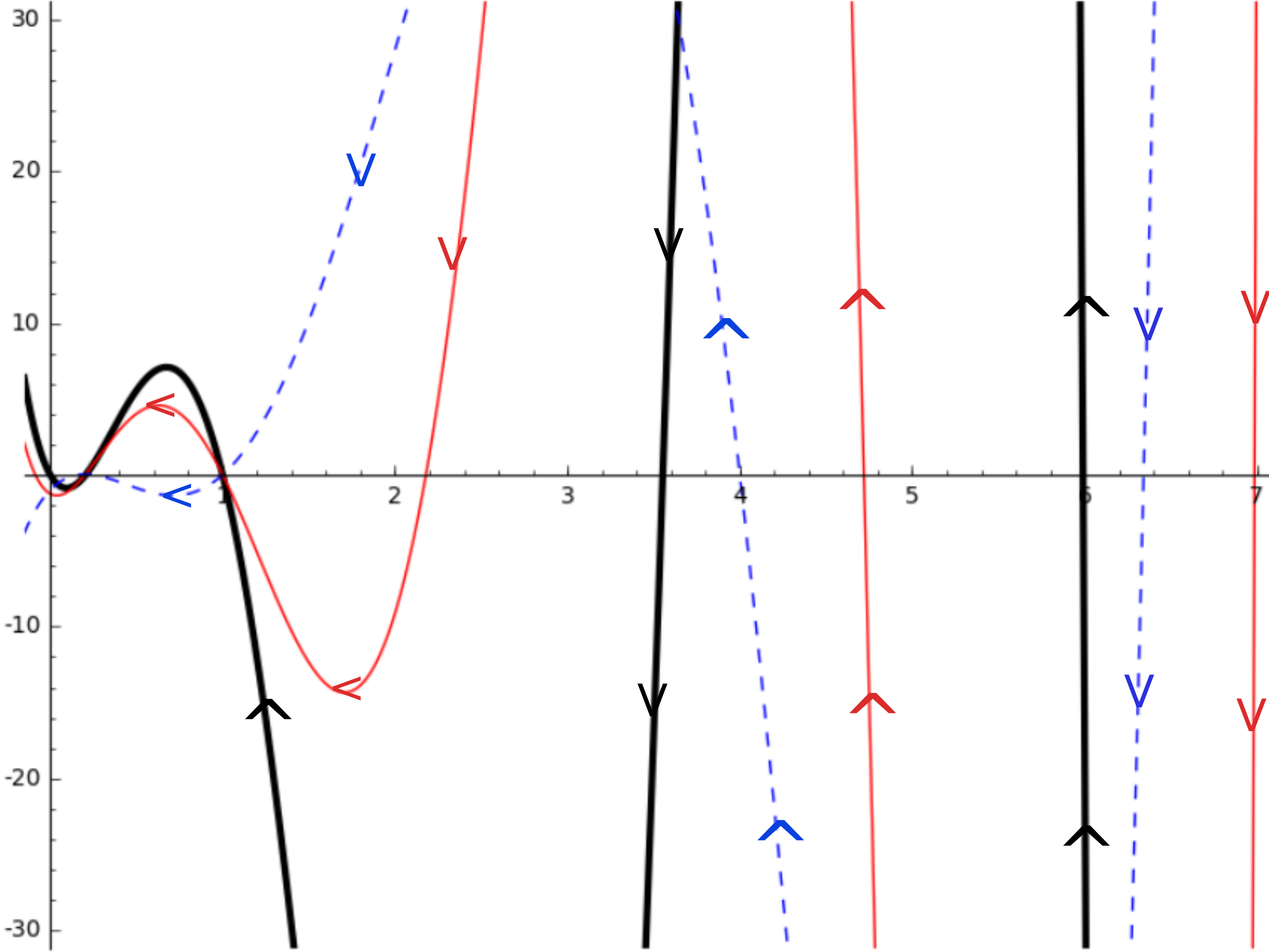}
			\caption{The values of      
				$f^{\sL_{T_1}^{q}}(1.5,x)$, $f^{\sL_{T_1}^{q}|1}(1.5,x)$ and $\sF_{T_1}^{1}(1.5,x)$.}
			\label{fig:graph_q>1}
		\end{minipage}
	\end{figure}
\end{example}

\vspace{2mm}

We recall that zero is a root of $\sF_{T}^{v}(q,x)$ with multiplicity one 
and the second smallest root of $\sF_{T}^{v}(q,x)$ lies in the interval 
$[\lambda_{a}(\sL_{T}^{q}),\lambda_{n-2}(\sL_{T}^{q}|v)]$, 
where $\lambda_{a}(\sL_{T}^{q})>0$ and 
$\lambda_{n-2}(\sL_{T}^{q}|v)>0$ are  the second smallest eigenvalues of $\sL_{T}^{q}$ 
and $\sL_{T}^{q}|v$ respectively. 
Thus, by Lemmas \ref{lem:deg_aux_poly}, \ref{lem1:F_T_q|<1} and \ref{lem2:F_T_q|>1}, we obtain the following.
\begin{remark}
	\label{rem:sgn_F_T} 
	Let $T$ be a tree on $n$ vertices and let $v\in T$. 
	Then, for a fixed $q\in \RR\backslash \{0\}$ and for  $x\in (-\infty,0)$ 
	the polynomial  $\sF_{T}^{v}(q,x)$ is positive when $n$ is even and negative  
	when $n$ is odd. Moreover, by the IVT,  for  all $x\in (0,\lambda_{a}(\sL_{T}^{q}))$, 
	we have  $\sF_{T}^{v}(q,x)< 0$ when $n$ is even and $\sF_{T}^{v}(q,x)> 0$ when $n$ is odd. 
\end{remark}

Let $T_1$ and $T_2$ be two trees with $n$ vertices such that $T_2$ covers $T_1$ in $\GTS_n$. 
For convenience, from Theorem \ref{thm:result_by_gen_lemma}, we define 
\begin{equation}
\label{eqn:def_D_T1^T2}
\sD_{T_2}^{T_1}(q,x)=f^{\sL_{T_1}^{q}}(q,x)-f^{\sL_{T_2}^{q}}(q,x)  
=  -  \frac{1}{q^2x}\sF_{P_k}^{1}(q,x)\sF_{H_1}^{1}(q,x)\sF_{H_2}^{1}(q,x), 
\mbox{ \rm{ where } } q\neq 0. 
\end{equation}

From Lemma \ref{lem:deg_aux_poly}, zero is a root of all the 
three polynomials $\sF_{P_k}^{1}(q,x)$, $\sF_{H_1}^{1}(q,x)$ 
and  $\sF_{H_2}^{1}(q,x)$ with multiplicity one. 
Thus, from \eqref{eqn:def_D_T1^T2}, zero is a root of $\sD_{T_2}^{T_1}(q,x)$ with multiplicity two. 
We need the following lemma in Section \ref{sec:eigenvalue_q_Lap}.

\begin{lemma}
	\label{lem:sgn_D_T2^T1}
	Let $T_1$ and $T_2$ be two trees with $n$ vertices such that $T_2$ covers $T_1$ in $\GTS_n$. 
	Let $\sD_{T_2}^{T_1}(q,x)$ be the polynomial defined in \eqref{eqn:def_D_T1^T2}. 
	Then, for $i=1,2$ and for a fixed $q\in \RR\backslash \{0\}$ 
	when  $x\in (-\infty,\lambda_{a}(\sL_{T_i}^{q}))-\{0\}$, we have 
	$\sD_{T_2}^{T_1}(q,x)>0$ if $n$ is even and $\sD_{T_2}^{T_1}(q,x)<0$ if $n$ is odd. 
\end{lemma}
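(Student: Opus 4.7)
The plan is to combine three pieces already in hand: the factorization of $\sD_{T_2}^{T_1}(q,x)$ from \eqref{eqn:def_D_T1^T2}, the sign analysis of each factor from Remark \ref{rem:sgn_F_T}, and the parity constraint from Remark \ref{rem:even_odd_no_vertices}. The proof reduces to a bookkeeping of signs, once we know that each of the three auxiliary polynomials $\sF_{P_k}^{1}(q,x)$, $\sF_{H_1}^{1}(q,x)$, $\sF_{H_2}^{1}(q,x)$ has a definite sign throughout the relevant range.

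First, I would observe that by Corollary \ref{cor:ev_n-1_subtrees}, for $i=1,2$ we have
\[
\lambda_{a}(\sL_{T_i}^{q}) \;\leq\; \min\bigl(\lambda_{a}(\sL_{P_k}^{q}),\lambda_{a}(\sL_{H_1}^{q}),\lambda_{a}(\sL_{H_2}^{q})\bigr).
\]
Consequently, for every $x\in(-\infty,\lambda_{a}(\sL_{T_i}^{q}))-\{0\}$, the point $x$ lies in the range $(-\infty,\lambda_{a}(\sL_{Q}^{q}))-\{0\}$ for each $Q\in\{P_k,H_1,H_2\}$, so Remark \ref{rem:sgn_F_T} is applicable to each of the three factors. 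This gives that the sign of $\sF_{Q}^{1}(q,x)$ is determined entirely by the parity of $|Q|$ and by whether $x<0$ or $0<x<\lambda_{a}(\sL_{Q}^{q})$.

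Next I would split into the two ranges. For $x<0$: the prefactor $-1/(q^2x)$ is positive, and by Remark \ref{rem:sgn_F_T} the factor $\sF_{Q}^{1}(q,x)$ is positive when $|Q|$ is even and negative when $|Q|$ is odd. Hence the sign of $\sD_{T_2}^{T_1}(q,x)$ equals $(-1)^{\,\#\{Q:\,|Q|\text{ odd}\}}$. For $0<x<\lambda_{a}(\sL_{T_i}^{q})$: the prefactor $-1/(q^2x)$ is negative, and by Remark \ref{rem:sgn_F_T} the factor $\sF_{Q}^{1}(q,x)$ is negative for even $|Q|$ and positive for odd $|Q|$. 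Hence the sign of $\sD_{T_2}^{T_1}(q,x)$ equals $-(-1)^{\,\#\{Q:\,|Q|\text{ even}\}} = (-1)^{\,\#\{Q:\,|Q|\text{ even}\}+1}$.

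Finally I would invoke Remark \ref{rem:even_odd_no_vertices}: since $|P_k|+|H_1|+|H_2|=n+2$, when $n$ is even exactly zero or exactly two of the three subtrees have odd order, so $\#\{Q:|Q|\text{ odd}\}\in\{0,2\}$ and symmetrically $\#\{Q:|Q|\text{ even}\}\in\{1,3\}$; when $n$ is odd exactly one or exactly three of them have odd order. Substituting these parities into the two sign formulas above yields, in all four combined cases, $\sD_{T_2}^{T_1}(q,x)>0$ when $n$ is even and $\sD_{T_2}^{T_1}(q,x)<0$ when $n$ is odd, as required. There is no real obstacle: the mild subtlety is just to make sure the two sub-cases produced by Remark \ref{rem:even_odd_no_vertices} give the same sign, which they do precisely because flipping the parity of two of the $|Q|$'s multiplies the product by $(-1)^2=1$.
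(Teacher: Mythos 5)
Your proof is correct and follows essentially the same route as the paper's: both arguments combine the factorization \eqref{eqn:def_D_T1^T2} with Corollary \ref{cor:ev_n-1_subtrees}, the sign information in Remark \ref{rem:sgn_F_T}, and the parity constraint of Remark \ref{rem:even_odd_no_vertices}, then finish by sign bookkeeping over the two ranges $x<0$ and $0<x<\lambda_{a}(\sL_{T_i}^{q})$. Your write-up is, if anything, slightly more explicit about why the two sub-cases of Remark \ref{rem:even_odd_no_vertices} yield the same sign.
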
 
\begin{proof}
	By Corollary \ref{cor:ev_n-1_subtrees} and Remark \ref{rem:sgn_F_T}, 
	for a fixed $q\in \RR\backslash \{0\}$ and 
	when $x \in (0,\lambda_{a}(\sL_{T_i}^{q}))$,  
	each polynomial from $\sF_{P_k}^1(q,x)$, $\sF_{H_1}^1(q,x)$,  
	and $\sF_{H_2}^1(q,x)$ evaluates to a  negative quantity 
	if  the number of vertices of  $P_k$, $H_1$ and $H_2$ are  even respectively. 
	Similarly, each polynomial from $\sF_{P_k}^1(q,x)$, $\sF_{H_1}^1(q,x)$,  
	and $\sF_{H_2}^1(q,x)$ evaluates to a  positive quantity 
	if the number of vertices of $P_k$, $H_1$ and $H_2$ are odd respectively.  
	
	When $n$ is an even number, then from  Remark \ref{rem:even_odd_no_vertices}, 
	for all $x \in (0,\lambda_{a}(\sL_{T_i}^{q}))$, 
	either all the three polynomials  $\sF_{P_k}^1(q,x)$, $\sF_{H_1}^1(q,x)$,  
	and $\sF_{H_2}^1(q,x)$ evaluate to  negative quantities 
	or exactly one polynomial evaluates to a negative quantity 
	and other two evaluate to positive quantities. 
	Thus, by  using \eqref{eqn:def_D_T1^T2}, we get 
	$\sD_{T_2}^{T_1}(q,x)>0$ for all $x \in (0,\lambda_{a}(\sL_{T_i}^{q}))$. 
	Similarly, when $n$ is an odd number, then by Remark \ref{rem:even_odd_no_vertices} 
	and \eqref{eqn:def_D_T1^T2}, 
	$\sD_{T_2}^{T_1}(q,x)<0$ for all $x \in (0,\lambda_{a}(\sL_{T_i}^{q}))$. 
	By similar arguments, it is simple to see that for all 
	$x \in (-\infty,0)$, $\sD_{T_2}^{T_1}(q,x)>0$ 
	when $n$ is even and  $\sD_{T_2}^{T_1}(q,x)<0$ when $n$ is odd. 
	The proof is complete.
\end{proof}

\vspace{2mm}

The following lemma is an easy consequence of 
Lemmas \ref{lem1:F_T_q|<1} and \ref{lem2:F_T_q|>1}  and 
Corollary  \ref{cor:subtree_ev}.

\begin{lemma}
	\label{lem:max_ev_with_D_T1T2}
	Let $T_1$ and $T_2$ be two trees with $n$ vertices such that $T_2$ covers $T_1$ in $\GTS_n$. 
	Then for a fixed $q\in \RR\backslash \{0\}$, 	$\sD_{T_2}^{T_1}(q,x) >  0$ 
	for all $x > \lambda_{\max}(\sL_{T_i}^{q})>0,$ where  $i=1,2.$
\end{lemma}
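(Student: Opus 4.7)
The plan is to argue purely from sign considerations, using that all non-zero roots of each factor $\sF_S^1(q,x)$ (for $S \in \{P_k, H_1, H_2\}$) are trapped in an interval lying below $\lambda_{\max}(\sL_{T_i}^q)$, so that once $x$ exceeds this threshold the factors have a fixed sign dictated by the leading coefficient.

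First, I would invoke Corollary \ref{cor:subtree_ev}. Since $P_k$, $H_1$, and $H_2$ are subtrees of $T_i$ (for $i = 1, 2$), we have
\[
\lambda_{\max}(\sL_{P_k}^{q}), \ \lambda_{\max}(\sL_{H_1}^{q}), \ \lambda_{\max}(\sL_{H_2}^{q}) \ \leq \ \lambda_{\max}(\sL_{T_i}^{q}).
\]
Next, Lemmas \ref{lem1:F_T_q|<1} and \ref{lem2:F_T_q|>1} together assert that, for every $q \in \RR \setminus \{0\}$, every non-zero root of $\sF_S^{1}(q,x)$ lies in $[\lambda_a(\sL_S^q), \lambda_{\max}(\sL_S^q)]$, and in particular all roots (including the root at $0$ from Lemma \ref{lem:deg_aux_poly}) lie in $[0, \lambda_{\max}(\sL_S^q)] \subseteq [0, \lambda_{\max}(\sL_{T_i}^q)]$. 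Consequently $\sF_S^{1}(q,x)$ has no sign change on the open interval $(\lambda_{\max}(\sL_{T_i}^q), \infty)$.

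The sign on that ray is determined by the leading behaviour. By Lemma \ref{lem:deg_aux_poly}, $\sF_S^{1}(q,x)$ has degree $|S|-1$ with leading coefficient $-q^2 d_1 < 0$ (where $d_1$ is the degree of vertex $1$ in $S$, which is at least one since each subtree contains a vertex adjacent to the distinguished vertex). Hence $\sF_S^{1}(q,x) \to -\infty$ as $x \to \infty$, and by the absence of roots beyond $\lambda_{\max}(\sL_{T_i}^q)$, we conclude
\[
\sF_{P_k}^{1}(q,x) < 0, \quad \sF_{H_1}^{1}(q,x) < 0, \quad \sF_{H_2}^{1}(q,x) < 0 \qquad \text{for all } x > \lambda_{\max}(\sL_{T_i}^{q}).
\]

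Finally, plugging into \eqref{eqn:def_D_T1^T2}, the product $\sF_{P_k}^{1}(q,x) \sF_{H_1}^{1}(q,x) \sF_{H_2}^{1}(q,x)$ is a product of three negative quantities, hence negative. Since $q^2 > 0$ and $x > \lambda_{\max}(\sL_{T_i}^{q}) > 0$, the prefactor $-\tfrac{1}{q^2 x}$ is negative, and $\sD_{T_2}^{T_1}(q,x) > 0$ follows. There is no real obstacle here; the entire argument is a direct sign bookkeeping once one has Corollary \ref{cor:subtree_ev} (to push every relevant root below the threshold) and Lemma \ref{lem:deg_aux_poly} (to fix the sign at infinity). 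The only minor caveat I would double-check is that $\lambda_{\max}(\sL_{T_i}^{q}) > 0$, which is immediate from $\Tr(\sL_{T_i}^q) = n + q^2(n-2) > 0$ together with the fact that eigenvalues are real.
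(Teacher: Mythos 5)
Your proposal is correct and follows essentially the same route as the paper: Corollary \ref{cor:subtree_ev} pushes the largest eigenvalues of $P_k$, $H_1$, $H_2$ below $\lambda_{\max}(\sL_{T_i}^q)$, Lemmas \ref{lem1:F_T_q|<1} and \ref{lem2:F_T_q|>1} confine all roots of the three auxiliary polynomials below that threshold, and the negative leading coefficient from Lemma \ref{lem:deg_aux_poly} fixes each factor's sign, after which the sign of the prefactor $-1/(q^2x)$ gives the claim. You merely make explicit some steps the paper leaves implicit (the leading-coefficient argument and the positivity of $\lambda_{\max}$ via the trace); the substance is identical.
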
 
\begin{proof}
	We recall that $P_k$, $H_1$ and $H_2$ are the subtrees of $T_1$ and $T_2$. 
	From Corollary \ref{cor:subtree_ev}, we  see that  
	$ \max \left(\lambda_{\max}(\sL_{P_k}^{q}),\lambda_{\max}(\sL_{H_1}^{q}),
	\lambda_{\max}(\sL_{H_2}^{q}) \right) \leq \min \left(\lambda_{\max}(\sL_{T_1}^{q}),
	\lambda_{\max}(\sL_{T_2}^{q})\right).$
	Thus, from Lemmas \ref{lem1:F_T_q|<1} and \ref{lem2:F_T_q|>1},  
	the polynomials $\sF_{P_k}^{1}(q,x)$, $\sF_{H_1}^{1}(q,x)$ and $\sF_{H_2}^{1}(q,x)$ are negative 
	for all $x > \lambda_{\max}(\sL_{T_i}^{q})> 0 $ for $i=1,2.$  
	Thus, using \eqref{eqn:def_D_T1^T2}, completes the proof. 
\end{proof}

\section{Proof of Theorem \ref{thm:main_result}}
\label{sec:eigenvalue_q_Lap}

We give the proof of Theorem \ref{thm:main_result} in three subsections one for each eigenvalue. 
It is sufficient to prove the result for each pair of covering trees on $\GTS_n$.   
The following remark is straight forward from the definition of 
$f^{\sL_{T}^q}(q,x)$. 

\begin{remark}
\label{rem:sgn_f^sL_T^i}
For a fixed    $q\in \RR$ and for    $x\in (-\infty,\lambda_{\min}(\sL_{T}^{q}))$, the polynomial   
$f^{\sL_{T}^q}(q,x)$ evaluates to a positive quantity 
when $n$ is even and negative quantity when $n$ is  odd.
Moreover, when  $x\in [\lambda_{\min}(\sL_{T}^{q}),\lambda_{a}(\sL_{T}^{q})]$, 
we have  $f^{\sL_{T}^q}(q,x) \leq 0$ if $n$ is even and $f^{\sL_{T}^q}(q,x)\geq 0$ if $n$ is odd. 
We also see that $f^{\sL_{T}^q}(q,x)> 0$ for all $x> \lambda_{\max}(\sL_{T}^{q})$.
\end{remark}  

\subsection{$\lambda_{\max}(\sL_T^q)$}
\label{subsec:largest_eg}

The following result is  about monotonicity of the largest eigenvalue of $\sL_T^q$ of a tree $T$ on $\GTS_n$.

\begin{theorem}
\label{thm:ev_max_main}
Let $T_1$ and $T_2$ be two trees with $n$ vertices such that $T_2$ covers $T_1$  in $\GTS_n$. 
Then, for all $q\in \RR\backslash \{0\}$ , we have   
$\lambda_{\max}(\sL_{T_1}^{q})\leq \lambda_{\max}(\sL_{T_2}^{q}).$ 
In particular, for any tree $T$ with $n$ vertices, we have  
$\lambda_{\max}(\sL_{P_n}^q) \leq \lambda_{\max}(\sL_{T}^q) \leq \lambda_{\max}(\sL_{S_n}^q).$ 
\end{theorem}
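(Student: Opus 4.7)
\medskip

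\noindent\textbf{Proof plan for Theorem \ref{thm:ev_max_main}.}
My plan is to argue by contradiction using the sign information already assembled in Sections \ref{sec:gen_lemma}--\ref{sec:F_T}. It suffices to handle the covering case; the general statement on $\GTS_n$ follows by iterating along a saturated chain. Set $\mu_1 = \lambda_{\max}(\sL_{T_1}^q)$ and $\mu_2 = \lambda_{\max}(\sL_{T_2}^q)$, and suppose toward a contradiction that $\mu_1 > \mu_2$.

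\medskip

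\noindent The key identity is the factorisation
\[
\sD_{T_2}^{T_1}(q,x) \;=\; f^{\sL_{T_1}^{q}}(q,x) - f^{\sL_{T_2}^{q}}(q,x) \;=\; -\frac{1}{q^{2}x}\,\sF_{P_k}^{1}(q,x)\,\sF_{H_1}^{1}(q,x)\,\sF_{H_2}^{1}(q,x)
\]
from Theorem \ref{thm:result_by_gen_lemma}. Lemma \ref{lem:max_ev_with_D_T1T2} guarantees that $\sD_{T_2}^{T_1}(q,x) > 0$ for every $x$ strictly larger than $\max(\mu_1,\mu_2) = \mu_1$. By continuity of $\sD_{T_2}^{T_1}$ in $x$, letting $x \downarrow \mu_1$ gives $\sD_{T_2}^{T_1}(q,\mu_1) \geq 0$, i.e.\ $f^{\sL_{T_1}^{q}}(q,\mu_1) \geq f^{\sL_{T_2}^{q}}(q,\mu_1)$. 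The left-hand side equals $0$ by definition of $\mu_1$, while the right-hand side is strictly positive: since $\mu_1 > \mu_2 = \lambda_{\max}(\sL_{T_2}^q)$, Remark \ref{rem:sgn_f^sL_T^i} gives $f^{\sL_{T_2}^{q}}(q,\mu_1) > 0$. This is the desired contradiction, and forces $\mu_1 \leq \mu_2$.

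\medskip

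\noindent For the ``in particular'' statement, Lemma \ref{lem:csikvari_min_max} says that $P_n$ is the unique minimal element and $S_n$ is the unique maximal element of $\GTS_n$. Hence for every tree $T$ on $n$ vertices there are saturated chains $P_n = T^{(0)} <_{\GTS_n} T^{(1)} <_{\GTS_n} \cdots <_{\GTS_n} T^{(r)} = T$ and $T = U^{(0)} <_{\GTS_n} U^{(1)} <_{\GTS_n} \cdots <_{\GTS_n} U^{(s)} = S_n$ of covering relations. Applying the covering case inductively along each chain yields $\lambda_{\max}(\sL_{P_n}^q) \leq \lambda_{\max}(\sL_T^q) \leq \lambda_{\max}(\sL_{S_n}^q)$ for every $q \in \RR \setminus\{0\}$; the case $q = 0$ is trivial since then $\sL_T^q = I$ for every tree $T$.

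\medskip

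\noindent The only genuinely delicate point is verifying that the contradiction really is a contradiction, i.e.\ that $f^{\sL_{T_2}^{q}}(q,\mu_1)$ is strictly positive rather than merely non-negative; this is where Remark \ref{rem:sgn_f^sL_T^i} (leading-coefficient-$1$ monic polynomial, evaluated strictly above its largest root) does all the work. Once that is in hand, the rest is a direct sign chase against Lemma \ref{lem:max_ev_with_D_T1T2}, so I do not anticipate any serious obstacle beyond carefully invoking the results already established.
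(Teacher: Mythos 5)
Your proposal is correct and follows essentially the same route as the paper: assume $\lambda_{\max}(\sL_{T_1}^{q})>\lambda_{\max}(\sL_{T_2}^{q})$, evaluate $\sD_{T_2}^{T_1}(q,x)$ at $x=\lambda_{\max}(\sL_{T_1}^{q})$ to get $-f^{\sL_{T_2}^{q}}(q,\lambda_{\max}(\sL_{T_1}^{q}))<0$ via Remark \ref{rem:sgn_f^sL_T^i}, and contradict Lemma \ref{lem:max_ev_with_D_T1T2}. The only cosmetic difference is that you reach the sign of $\sD_{T_2}^{T_1}$ at $\lambda_{\max}(\sL_{T_1}^{q})$ by a limiting/continuity argument from $x>\lambda_{\max}(\sL_{T_1}^{q})$, whereas the paper applies the lemma directly at that point; both are valid.
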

\begin{proof}    
Assume to the contrary that   $\lambda_{\max}(\sL_{T_1}^{q})> \lambda_{\max}(\sL_{T_2}^{q})$.  
When $x=\lambda_{\max}(\sL_{T_1}^{q})$, 
by using Remark \ref{rem:sgn_f^sL_T^i} and \eqref{eqn:def_D_T1^T2}, we get 
$\sD_{T_2}^{T_1}(q,\lambda_{\max}(\sL_{T_1}^{q}))= -f^{\sL_{T_2}^{q}}(q,\lambda_{\max}(\sL_{T_1}^{q}))< 0$. 
This contradicts  Lemma \ref{lem:max_ev_with_D_T1T2}. 
Thus, $\lambda_{\max}(\sL_{T_1}^{q})\leq  \lambda_{\max}(\sL_{T_2}^{q})$ for all $q\in \RR\backslash \{0\}$.  
Using  Lemma \ref{lem:csikvari_min_max} completes the proof.
\end{proof}


In Example \ref{eg:ev_S_n}, we determine all the eigenvalues of $\sL_{S_n}^q$ for all $q\in \RR$. 
Therefore, by Theorem \ref{thm:ev_max_main}, 
we obtain an upper bound on the largest eigenvalue of $\sL_{T}^{q}$ for all $q\in \RR$. 

\begin{example}
\label{eg:ev_S_n}
Let $S_n$ be the star tree on the vertex set $[n]$ with $q$-Laplacian $\sL_{S_n}^q$. 
Without loss of generality in $S_n$, we can  assume that the vertex $1$ has degree $n-1$. 
We see that 
the only permutations $(1,j)\in \SSS_n$  contribute to  $\det(xI-\sL_{S_n}^q)$, 
where $j=1,2,\ldots,n$. 
The identity permutation contributes  $(x-1)^{n-1}(x-1-(n-2)q^2)$ and 
each of the remaining $n-1$ transpositions contribute  $-q^2(x-1)^{n-2}$. 
Therefore, 
\begin{eqnarray*}
f^{\sL_{S_n}^q}(q,x) & = & \left[(x-1)^{n-1}(x-1-(n-2)q^2)\right]
-\left[(n-1)q^2(x-1)^{n-2}\right] \\
& = & (x-1)^{n-2}\left[x^2-(2+(n-2)q^2)x-q^2+1\right].
\end{eqnarray*}
Thus, the eigenvalues of $\sL_{S_n}^q$ are  $1$ with multiplicity $n-2$ and 
$\frac{2+(n-2)q^2\pm  \sqrt{n^2q^4+4(n-1)(1-q^2)q^2}}{2}.$
\end{example}

The following corollary is an immediate consequence of Theorem \ref{thm:ev_max_main}.
\begin{corollary}
\label{cor:max_ev_bound_L_Tq}
Let $T$ be a tree on $n>1$ vertices with $q$-Laplacian $\sL_T^q$.  
Then, for all $q\in \RR$
$$\lambda_{\max}(\sL_T^q) \leq \frac{2+(n-2)q^2+ \sqrt{n^2q^4+4(n-1)(1-q^2)q^2}}{2}.$$ 
\end{corollary}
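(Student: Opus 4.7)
The plan is to combine the monotonicity result Theorem \ref{thm:ev_max_main} with the explicit spectral computation of the star tree from Example \ref{eg:ev_S_n}. Since $S_n$ is the unique maximal element of $\GTS_n$ by Lemma \ref{lem:csikvari_min_max}, iterating Theorem \ref{thm:ev_max_main} along a saturated chain from $T$ up to $S_n$ yields $\lambda_{\max}(\sL_T^q) \leq \lambda_{\max}(\sL_{S_n}^q)$ for every tree $T$ on $n$ vertices and every $q \in \RR \setminus \{0\}$. The case $q = 0$ must be handled separately, but it is trivial: $\sL_T^0 = I$, so $\lambda_{\max}(\sL_T^0) = 1$, while the right-hand side of the claimed inequality evaluates to $(2 + 0 + 0)/2 = 1$, giving equality.

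Next, I would invoke Example \ref{eg:ev_S_n}, which lists the eigenvalues of $\sL_{S_n}^q$ as $1$ with multiplicity $n-2$, together with the two roots
\[
\frac{2 + (n-2)q^2 \pm \sqrt{n^2 q^4 + 4(n-1)(1-q^2)q^2}}{2}
\]
of the quadratic factor $x^2 - (2+(n-2)q^2)x - q^2 + 1$. To conclude, I need to verify that for all real $q$ the larger of these two roots is the maximum eigenvalue, which reduces to two elementary checks.

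First, the radicand is non-negative for every $q \in \RR$: factoring gives
\[
n^2 q^4 + 4(n-1)(1-q^2)q^2 = q^2\bigl[(n-2)^2 q^2 + 4(n-1)\bigr] \geq 0,
\]
so both roots are real. Second, the larger root dominates the eigenvalue $1$: since $(n-2)q^2 \geq 0$ for $n \geq 2$ and the square root is non-negative,
\[
\frac{2 + (n-2)q^2 + \sqrt{n^2 q^4 + 4(n-1)(1-q^2)q^2}}{2} \;\geq\; \frac{2}{2} = 1.
\]
Hence $\lambda_{\max}(\sL_{S_n}^q)$ equals exactly this larger root, and chaining with the monotonicity inequality gives the desired bound.

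I do not expect a serious obstacle here; the proof is a direct combination of the previously established monotonicity on $\GTS_n$ with a closed-form spectral computation. The only mild subtlety is ensuring the radicand is non-negative for every real $q$, which is handled by the factorization $q^2[(n-2)^2 q^2 + 4(n-1)]$, and checking the $q = 0$ boundary case separately since Theorem \ref{thm:ev_max_main} is stated for $q \neq 0$.
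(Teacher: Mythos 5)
Your proposal is correct and follows exactly the route the paper takes: the corollary is stated there as an immediate consequence of Theorem \ref{thm:ev_max_main} (iterated up to the maximal element $S_n$ of $\GTS_n$) together with the explicit eigenvalues of $\sL_{S_n}^q$ computed in Example \ref{eg:ev_S_n}. Your additional checks --- the factorization $q^2[(n-2)^2q^2+4(n-1)]$ of the radicand, the verification that the larger root dominates the eigenvalue $1$, and the separate $q=0$ case --- are all accurate and merely make explicit what the paper leaves implicit.
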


\subsection{ $\lambda_{\min}(\sL_T^{q})$}
\label{subsec:smallest_eg}
Now we prove a part of Theorem \ref{thm:main_result} about the smallest eigenvalue of $\sL_T^q$ 
when $|q|\leq 1$. 
The proof of the following theorem is very similar to the proof of Theorem \ref{thm:ev_max_main}.
 
\begin{theorem}
\label{thm:min_ev_q<1}
Let $T_1$ and $T_2$ be two trees with $n$ vertices such that $T_2$ covers $T_1$  in $\GTS_n$. 
Then, for all $q\in \RR\backslash \{0\}$ with $|q|\leq 1$,  we have  
$\lambda_{\min}(\sL_{T_1}^{q})\geq \lambda_{\min}(\sL_{T_2}^{q}).$ 
\end{theorem}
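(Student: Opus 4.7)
\textbf{Proof proposal for Theorem \ref{thm:min_ev_q<1}.} The plan is to mimic the proof of Theorem \ref{thm:ev_max_main}: assume the opposite strict inequality and evaluate the difference polynomial $\sD_{T_2}^{T_1}(q,x)$ at a root of $f^{\sL_{T_1}^q}(q,x)$, then derive a contradiction from the sign of $\sD_{T_2}^{T_1}$ forced by Lemma \ref{lem:sgn_D_T2^T1}. First, the boundary case $|q|=1$ is trivial: by Lemma \ref{lem:bapat-lal-pati}, $\det(\sL_{T_i}^q)=1-q^2=0$ and $\sL_{T_i}^q$ is positive semidefinite, so $\lambda_{\min}(\sL_{T_1}^q)=0=\lambda_{\min}(\sL_{T_2}^q)$ and there is nothing to prove. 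So I focus on $0<|q|<1$, where both $\sL_{T_i}^q$ are positive definite and $\lambda_{\min}(\sL_{T_i}^q)>0$.

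Suppose for contradiction that $\lambda_{\min}(\sL_{T_1}^q)<\lambda_{\min}(\sL_{T_2}^q)$. Set $\alpha=\lambda_{\min}(\sL_{T_1}^q)$. By Lemma \ref{lem:mult_small_ev}, the multiplicity of $\lambda_{\min}(\sL_{T_1}^q)$ is one, so $\alpha<\lambda_{a}(\sL_{T_1}^q)$, and since $|q|<1$ we have $\alpha>0$. Therefore $\alpha$ lies in the interval $(0,\lambda_{a}(\sL_{T_1}^q))\subseteq(-\infty,\lambda_{a}(\sL_{T_1}^q))\setminus\{0\}$ on which Lemma \ref{lem:sgn_D_T2^T1} applies: $\sD_{T_2}^{T_1}(q,\alpha)>0$ when $n$ is even, and $\sD_{T_2}^{T_1}(q,\alpha)<0$ when $n$ is odd.

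On the other hand, since $\alpha$ is a root of $f^{\sL_{T_1}^q}(q,x)$, the definition \eqref{eqn:def_D_T1^T2} gives
\[
\sD_{T_2}^{T_1}(q,\alpha)=f^{\sL_{T_1}^q}(q,\alpha)-f^{\sL_{T_2}^q}(q,\alpha)=-f^{\sL_{T_2}^q}(q,\alpha).
\]
Now $\alpha<\lambda_{\min}(\sL_{T_2}^q)$ by the contradictory assumption, so by Remark \ref{rem:sgn_f^sL_T^i} the quantity $f^{\sL_{T_2}^q}(q,\alpha)$ is strictly positive if $n$ is even and strictly negative if $n$ is odd. Hence $\sD_{T_2}^{T_1}(q,\alpha)<0$ when $n$ is even and $>0$ when $n$ is odd, contradicting the sign obtained from Lemma \ref{lem:sgn_D_T2^T1} in both parities. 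This forces $\lambda_{\min}(\sL_{T_1}^q)\geq\lambda_{\min}(\sL_{T_2}^q)$, finishing the proof.

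The only subtle point, and the one that makes the $|q|\leq 1$ hypothesis essential, is the verification that $\alpha\in(0,\lambda_{a}(\sL_{T_1}^q))$: positivity comes from positive definiteness of $\sL_{T_1}^q$ (Lemma \ref{lem:bapat-lal-pati}) and the strict separation $\alpha<\lambda_{a}(\sL_{T_1}^q)$ from the simplicity of $\lambda_{\min}$ (Lemma \ref{lem:mult_small_ev}); without both one cannot invoke Lemma \ref{lem:sgn_D_T2^T1} to pin down the sign of $\sD_{T_2}^{T_1}$ at $\alpha$.
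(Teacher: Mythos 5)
Your proposal is correct and follows essentially the same route as the paper: assume $\lambda_{\min}(\sL_{T_1}^q)<\lambda_{\min}(\sL_{T_2}^q)$, evaluate $\sD_{T_2}^{T_1}(q,x)$ at $x=\lambda_{\min}(\sL_{T_1}^q)$, and play the sign from Remark \ref{rem:sgn_f^sL_T^i} against that from Lemma \ref{lem:sgn_D_T2^T1}. The only (harmless) difference is that you justify membership in the interval of Lemma \ref{lem:sgn_D_T2^T1} via Lemma \ref{lem:mult_small_ev} applied to $T_1$, whereas the paper's argument can get by with the bound $\lambda_{\min}(\sL_{T_1}^q)<\lambda_{\min}(\sL_{T_2}^q)\leq\lambda_{a}(\sL_{T_2}^q)$ coming directly from the contradiction hypothesis.
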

\begin{proof}
Assume to the contrary that $\lambda_{\min}(\sL_{T_1}^q) <\lambda_{\min}(\sL_{T_2}^q)$. 
It is easy to see that in this case $q\neq \pm 1$. 
Therefore from Lemma \ref{lem:bapat-lal-pati},  $0< \lambda_{\min}(\sL_{T_1}^q) <\lambda_{\min}(\sL_{T_2}^q)$. 
By Remark \ref{rem:sgn_f^sL_T^i}, 
we see that $-f^{\sL_{T_2}^q}(q,\lambda_{\min}(\sL_{T_1}^q))$ is negative  
if $n$ is even and positive if $n$ is odd.  
Therefore by \eqref{eqn:def_D_T1^T2},   
$\sD_{T_2}^{T_1}(q,\lambda_{\min}(\sL_{T_1}^q)) 
= -f^{\sL_{T_2}^q}(q,\lambda_{\min}(\sL_{T_1}^q))$ is negative 
if $n$ is even and positive if $n$ is odd which contradicts Lemma \ref{lem:sgn_D_T2^T1}.
Thus, $ \lambda_{\min}(\sL_{T_1}^q) \geq 
\lambda_{\min}(\sL_{T_2}^q) \geq 0$ and the proof is complete.
\end{proof}


We next show that going up on $\GTS_n$ decreases the smallest eigenvalue of $\sL_T^q$ 
when $q\in \RR$ with $|q|>1$.
The proof of the following theorem is very similar to the proof of Theorem \ref{thm:min_ev_q<1}. 
\begin{theorem}
\label{thm:min_ev_q>1}
Let $T_1$ and $T_2$ be two trees on  $n$ vertices such that $T_2$ covers $T_1$ in $\GTS_n$. 
Then, for all $q\in \RR$ with $|q|>1$,   we have 
$\lambda_{\min}(\sL_{T_1}^{q})\geq \lambda_{\min}(\sL_{T_2}^{q}).$ 
\end{theorem}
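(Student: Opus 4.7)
The plan is to mimic the proof of Theorem \ref{thm:min_ev_q<1} almost verbatim, with the main difference being bookkeeping of the sign of $\lambda_{\min}(\sL_T^q)$, which is now negative rather than positive. Concretely, I would argue by contradiction: assume $\lambda_{\min}(\sL_{T_1}^q) < \lambda_{\min}(\sL_{T_2}^q)$ and evaluate the polynomial identity \eqref{eqn:def_D_T1^T2} at $x_0 = \lambda_{\min}(\sL_{T_1}^q)$.

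First I would verify that $x_0$ lies in the region where Lemma \ref{lem:sgn_D_T2^T1} applies. By Lemma \ref{lem:bapat-lal-pati}, for $|q|>1$ we have $\lambda_{\min}(\sL_{T_i}^q) < 0 < \lambda_a(\sL_{T_i}^q)$ for $i=1,2$, so in particular $x_0 < 0 < \lambda_a(\sL_{T_i}^q)$ and $x_0 \neq 0$. Hence Lemma \ref{lem:sgn_D_T2^T1} tells us that $\sD_{T_2}^{T_1}(q,x_0)$ is positive when $n$ is even and negative when $n$ is odd. On the other hand, since $x_0$ is a root of $f^{\sL_{T_1}^q}(q,x)$, the definition of $\sD_{T_2}^{T_1}$ gives $\sD_{T_2}^{T_1}(q,x_0) = -f^{\sL_{T_2}^q}(q,x_0)$. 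Since $x_0 < \lambda_{\min}(\sL_{T_2}^q)$ by our assumption, Remark \ref{rem:sgn_f^sL_T^i} forces $f^{\sL_{T_2}^q}(q,x_0)>0$ for even $n$ and $<0$ for odd $n$. In both parities this yields the opposite sign from what Lemma \ref{lem:sgn_D_T2^T1} demands, a contradiction. Therefore $\lambda_{\min}(\sL_{T_1}^q) \geq \lambda_{\min}(\sL_{T_2}^q)$, as required.

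The only subtlety I would watch for is making sure the sign analysis in Remark \ref{rem:sgn_f^sL_T^i} is valid for $|q|>1$; fortunately that remark is stated for all $q \in \RR$ and only uses the leading coefficient of $f^{\sL_T^q}(q,x)$ together with the interval $(-\infty,\lambda_{\min}(\sL_T^q))$. Similarly, Lemma \ref{lem:sgn_D_T2^T1} was proved for every $q \in \RR \setminus \{0\}$. Thus no new ingredients are needed beyond what was already established in the $|q|\leq 1$ case; the single substantive change is that we now extract the contradiction in a region where $x_0 < 0$ rather than $x_0 > 0$, but since $0$ is excluded in Lemma \ref{lem:sgn_D_T2^T1} this causes no problem. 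Consequently the proof is essentially a transcription of the proof of Theorem \ref{thm:min_ev_q<1} with this adjusted range for $x_0$, and no genuinely hard step remains.
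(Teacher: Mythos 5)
Your proposal is correct and follows essentially the same route as the paper's own proof: argue by contradiction, evaluate $\sD_{T_2}^{T_1}(q,x)$ at $x_0=\lambda_{\min}(\sL_{T_1}^{q})<0$, use Remark \ref{rem:sgn_f^sL_T^i} to pin down the sign of $-f^{\sL_{T_2}^{q}}(q,x_0)$, and contradict Lemma \ref{lem:sgn_D_T2^T1}. The only cosmetic difference is that you explicitly flag the applicability of Remark \ref{rem:sgn_f^sL_T^i} and Lemma \ref{lem:sgn_D_T2^T1} for $|q|>1$, which the paper leaves implicit.
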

\begin{proof}
When $q\in \RR$ with $|q|>1$, from Lemma \ref{lem:bapat-lal-pati}, 
$\lambda_{\min}(\sL_{T_1}^{q})<0$ and $\lambda_{\min}(\sL_{T_2}^{q})<0.$
Assume to the contrary that $ \lambda_{\min}(\sL_{T_1}^q) < \lambda_{\min}(\sL_{T_2}^q)<0$.  
By  Remark \ref{rem:sgn_f^sL_T^i}, 
$f^{\sL_{T_2}^q}(q,\lambda_{\min}(\sL_{T_1}^q))$ is positive  
when $n$ is even and negative when $n$ is odd. 
Therefore, by \eqref{eqn:def_D_T1^T2},    
$\sD_{T_2}^{T_1}(q,\lambda_{\min}(\sL_{T_1}^q)) = 
-f^{\sL_{T_2}^q}(q,\lambda_{\min}(\sL_{T_1}^q))$ 
is negative when $n$ is even and positive when $n$ is odd. 
This contradicts  Lemma \ref{lem:sgn_D_T2^T1}.  
Thus, $0 > \lambda_{\min}(\sL_{T_1}^q) \geq 
\lambda_{\min}(\sL_{T_2}^q)$, completing the proof.
\end{proof}


\comment{
By combining Lemma \ref{lem:csikvari_min_max} with  
Theorems \ref{thm:min_ev_q<1} and  \ref{thm:min_ev_q>1}, 
we obtain the following  corollary. 

\begin{corollary}
\label{cor:min_ev}
Let $T_1$ and $T_2$ be two trees with $n$ vertices 
such that $T_2$ covers $T_1$  in $\GTS_n$. 
Then, for all $q\in \RR$, we have    
$\lambda_{\min}(\sL_{T_1}^{q})\geq \lambda_{\min}(\sL_{T_2}^{q}).$ 
In particular, for any tree $T$ with $n$ vertices 
$\lambda_{\min}(\sL_{P_n}^q) \geq \lambda_{\min}(\sL_{T}^q) \geq \lambda_{\min}(\sL_{S_n}^q).$ 
\end{corollary}
}

\subsection{ $\lambda_{a}(\sL_T^{q})$}
\label{subsec:2nd_min_ev}

We next show that as we go up on $\GTS_n$, the second smallest eigenvalue 
of $\sL_T^q$ increases for all $q\in \RR$.   We begin with the following.  

\begin{remark}
\label{rem:2nd_ev_L_T2} 
Let $T_1$ and $T_2$ be two trees on $n$ vertices such that  $T_2$ covers $T_1$  in $\GTS_n$. 
By the interlacing theorem,  
the smallest eigenvalue  of $\sL_{T_i}^q|\{1,k\}$ lies between 
$\lambda_{\min}(\sL_{T_i}^q)$ and $\lambda_{n-2}(\sL_{T_i}^q)$, where $i=1,2$.
\end{remark}

From Lemma \ref{lem:mult_small_ev}, we recall that  for all $q\in \RR\backslash\{0\}$, 
the algebraic multiplicity of  $\lambda_{\min}(\sL_{T}^{q})$ as an eigenvalue of $\sL_{T}^{q}$ is 1. 
This is required to prove  the following result. 

\begin{lemma}
\label{lem:min_T1<2nd_min_T2}
Let $T_1$ and $T_2$ be two trees on $n$ vertices such that $T_2$ covers $T_1$ in $\GTS_n$. 
Then, for all $q\in \RR\backslash \{0\}$, we have $\lambda_{\min}(\sL_{T_1}^q) \leq  \lambda_{a}(\sL_{T_2}^q)$.
\end{lemma}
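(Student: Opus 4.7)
The plan is to proceed by contradiction, assuming $\lambda_{\min}(\sL_{T_1}^q) > \lambda_a(\sL_{T_2}^q)$. The case $|q| \geq 1$ is immediate: Lemma \ref{lem:bapat-lal-pati} (together with the paragraph following Lemma \ref{lem:bapat_ED_ev}) gives $\lambda_{\min}(\sL_{T_1}^q) \leq 0 < \lambda_a(\sL_{T_2}^q)$, contradicting the hypothesis. So I focus on $0 < |q| < 1$, where both $\sL_{T_i}^q$ are positive definite; by Lemma \ref{lem:mult_small_ev}, each $\lambda_{\min}(\sL_{T_i}^q)$ is a simple eigenvalue, so the hypothesis yields
\[
0 < \lambda_{\min}(\sL_{T_2}^q) < \lambda_a(\sL_{T_2}^q) < \lambda_{\min}(\sL_{T_1}^q) < \lambda_a(\sL_{T_1}^q).
\]

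Since $\lambda_{\min}(\sL_{T_1}^q) \in (0, \lambda_a(\sL_{T_1}^q))$, Lemma \ref{lem:sgn_D_T2^T1} (with $i=1$) ensures $\sD_{T_2}^{T_1}(q, \lambda_{\min}(\sL_{T_1}^q))$ has strict sign $(-1)^n$. Combining this with $f^{\sL_{T_1}^q}(q, \lambda_{\min}(\sL_{T_1}^q)) = 0$ in \eqref{eqn:def_D_T1^T2} forces $f^{\sL_{T_2}^q}(q, \lambda_{\min}(\sL_{T_1}^q))$ to have sign $(-1)^{n+1}$. On the other hand, Lemma \ref{lem:bapat-lal-pati} gives $f^{\sL_{T_2}^q}(q, 0) = (-1)^n (1-q^2)$, which has sign $(-1)^n$ since $1-q^2 > 0$. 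The opposite endpoint signs then force $f^{\sL_{T_2}^q}$ to undergo an odd number of sign changes on $(0, \lambda_{\min}(\sL_{T_1}^q))$, where each sign change is contributed by a distinct root in the interval whose multiplicity is odd.

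I now split on $\lambda_{n-2}(\sL_{T_2}^q)$. If $\lambda_{n-2}(\sL_{T_2}^q) \geq \lambda_{\min}(\sL_{T_1}^q)$, the only eigenvalues of $\sL_{T_2}^q$ inside $(0, \lambda_{\min}(\sL_{T_1}^q))$ are $\lambda_n(\sL_{T_2}^q)$ and $\lambda_{n-1}(\sL_{T_2}^q)$, and both are simple (the former by Lemma \ref{lem:mult_small_ev}, the latter because of the strict gap $\lambda_{n-1}(\sL_{T_2}^q) = \lambda_a(\sL_{T_2}^q) < \lambda_{\min}(\sL_{T_1}^q) \leq \lambda_{n-2}(\sL_{T_2}^q)$). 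This yields exactly $2$ sign changes, contradicting the odd-parity count. If instead $\lambda_{n-2}(\sL_{T_2}^q) < \lambda_{\min}(\sL_{T_1}^q)$, I use the structural observation that the GTS operation only alters edges incident to $\{1, k\}$ and preserves all degrees of vertices in $[n] \setminus \{1, k\}$, so $\sL_{T_1}^q|\{1, k\} = \sL_{T_2}^q|\{1, k\}$. Writing $M$ for this common principal submatrix and applying Cauchy interlacing for a rank-$2$ principal submatrix (as in Remark \ref{rem:2nd_ev_L_T2}) to both $\sL_{T_1}^q$ and $\sL_{T_2}^q$ yields
\[
\lambda_{n-2}(\sL_{T_2}^q) \geq \lambda_{n-2}(M) \geq \lambda_n(\sL_{T_1}^q) = \lambda_{\min}(\sL_{T_1}^q),
\]
contradicting this case's assumption. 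Both cases being impossible, the lemma follows.

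The main obstacle is the sign-change count in the first case, which is exactly where Lemma \ref{lem:mult_small_ev} is essential: it guarantees that both of the two smallest eigenvalues of $\sL_{T_2}^q$ are simple, so that they contribute exactly $2$ (and not $0$ or $1$) sign changes, producing the needed parity collision. The structural identity $\sL_{T_1}^q|\{1,k\} = \sL_{T_2}^q|\{1,k\}$ is the key matrix-level input that makes Cauchy interlacing line up the two trees in the second case.
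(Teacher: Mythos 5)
Your proof is correct and follows essentially the same route as the paper's: the same sign evaluation of $\sD_{T_2}^{T_1}$ at $\lambda_{\min}(\sL_{T_1}^q)$ via Lemma \ref{lem:sgn_D_T2^T1} combined with Lemma \ref{lem:mult_small_ev}, and the same interlacing against the common principal submatrix $\sL_{T_1}^q|\{1,k\}=\sL_{T_2}^q|\{1,k\}$ that underlies Remark \ref{rem:2nd_ev_L_T2}. Your explicit parity count of roots in $(0,\lambda_{\min}(\sL_{T_1}^q))$ and your explicit statement that the two deleted-row submatrices coincide are just cleaner renderings of steps the paper handles with its ``either\dots or'' IVT argument and leaves partly implicit.
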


\begin{proof}
For a fixed $q\in \RR$ with $|q|\geq 1$, From Lemma \ref{lem:bapat-lal-pati},  
we get $\lambda_{\min}(\sL_{T_1}^q) \leq 0 <  \lambda_{a}(\sL_{T_2}^q)$. 
When $q\in \RR\backslash \{0\}$ with $ |q|<1$, assume to the contrary that   
$\lambda_{\min}(\sL_{T_1}^q) > \lambda_{a}(\sL_{T_2}^q)$. 
Therefore by Lemma \ref{lem:mult_small_ev},  $\lambda_{a}(\sL_{T_1}^q) > \lambda_{\min}(\sL_{T_1}^q) > 
\lambda_{a}(\sL_{T_2}^q) > \lambda_{\min}(\sL_{T_2}^q)>0$.
From \eqref{eqn:def_D_T1^T2},  
$\sD_{T_2}^{T_1}(q,\lambda_{\min}(\sL_{T_1}^{q}))=-f^{\sL_{T_2}^q}(q,\lambda_{\min}(\sL_{T_1}^q))$. 
Thus, by  Lemma \ref{lem:sgn_D_T2^T1}, 
$f^{\sL_{T_2}^q}(q,\lambda_{\min}(\sL_{T_1}^q))$ is negative if $n$ is even and 
positive  if $n$ is odd. On the other hand by Remark    \ref{rem:sgn_f^sL_T^i} and by the IVT, either    $f^{\sL_{T_2}^q}(q,x)$ evaluates to a positive quantity  if $n$ is even 
and negative quantity if $n$ is odd for some $x \in (\lambda_{a}(\sL_{T_2}^q),\lambda_{\min}(\sL_{T_1}^q))$ 
or there is an eigenvalue $\lambda_{n-2}(\sL_{T_2}^q)$ of $\sL_{T_2}^q$ in the interval $\left[\lambda_{a}(\sL_{T_2}^q), \lambda_{\min}(\sL_{T_1}^q)\right)$.
Thus, in both the cases,  there must be an eigenvalue 
$\lambda_{n-2}(\sL_{T_2}^q)$ of $\sL_{T_2}^q$ in the interval 
$\left[\lambda_{a}(\sL_{T_2}^q), \lambda_{\min}(\sL_{T_1}^q)\right)$ such that 
$$\lambda_{a}(\sL_{T_1}^q)  >  \lambda_{\min}(\sL_{T_1}^q) > 
 \lambda_{n-2}(\sL_{T_2}^q)  \geq \lambda_{a}(\sL_{T_2}^q)> \lambda_{\min}(\sL_{T_2}^q)$$
which contradicts Remark \ref{rem:2nd_ev_L_T2} and 
hence the proof is complete. 
\end{proof}

\vspace{2mm}

The proof of the following theorem is very similar to  the proof of Theorem \ref{thm:ev_max_main}.

\begin{theorem}
\label{thm:2nd_ev_main}
Let $T_1$ and $T_2$ be two trees with $n$ vertices such that $T_2$ covers $T_1$ in $\GTS_n$. 
Then, for all $q\in \RR\backslash\{0\}$, we have   
$\lambda_{a}(\sL_{T_1}^{q})\leq \lambda_{a}(\sL_{T_2}^{q}).$ 
In particular, for any tree $T$ with $n$ vertices,   
$\lambda_{a}(\sL_{P_n}^q) \leq \lambda_{a}(\sL_{T}^q) \leq \lambda_{a}(\sL_{S_n}^q).$ 
\end{theorem}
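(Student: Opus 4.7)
The plan is to imitate the contradiction argument of Theorem \ref{thm:ev_max_main}, but to relocate from the ``large $x$'' regime to the ``small $x$'' regime, using Lemma \ref{lem:sgn_D_T2^T1} in place of Lemma \ref{lem:max_ev_with_D_T1T2}. The decisive additional input, not needed for the largest eigenvalue, is Lemma \ref{lem:min_T1<2nd_min_T2}: it pins $\lambda_{a}(\sL_{T_2}^{q})$ into the spectral gap of $\sL_{T_1}^{q}$ between $\lambda_{\min}$ and $\lambda_{a}$, which is exactly where Remark \ref{rem:sgn_f^sL_T^i} controls the sign of $f^{\sL_{T_1}^{q}}(q,\cdot)$.

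Concretely, suppose for contradiction that $\lambda_{a}(\sL_{T_1}^{q}) > \lambda_{a}(\sL_{T_2}^{q})$ for some $q\in \RR\setminus\{0\}$. Since $\lambda_{a}(\sL_{T_2}^{q})$ is a root of $f^{\sL_{T_2}^{q}}(q,\cdot)$, evaluating \eqref{eqn:def_D_T1^T2} at $x=\lambda_{a}(\sL_{T_2}^{q})$ yields
\[
\sD_{T_2}^{T_1}\bigl(q,\lambda_{a}(\sL_{T_2}^{q})\bigr) \;=\; f^{\sL_{T_1}^{q}}\bigl(q,\lambda_{a}(\sL_{T_2}^{q})\bigr).
\]
I would then derive two incompatible sign statements for this common value. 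On one hand, since $\lambda_{a}(\sL_{T_2}^{q})>0$ for all $q\in\RR$ (observed just before Lemma \ref{lem:mult_small_ev}) and $\lambda_{a}(\sL_{T_2}^{q}) < \lambda_{a}(\sL_{T_1}^{q})$ by assumption, the point $\lambda_{a}(\sL_{T_2}^{q})$ lies in $(-\infty,\lambda_{a}(\sL_{T_1}^{q}))-\{0\}$, so Lemma \ref{lem:sgn_D_T2^T1} with $i=1$ forces $\sD_{T_2}^{T_1}(q,\lambda_{a}(\sL_{T_2}^{q}))>0$ when $n$ is even and $<0$ when $n$ is odd. On the other hand, Lemma \ref{lem:min_T1<2nd_min_T2} gives $\lambda_{\min}(\sL_{T_1}^{q})\leq \lambda_{a}(\sL_{T_2}^{q}) < \lambda_{a}(\sL_{T_1}^{q})$, so $\lambda_{a}(\sL_{T_2}^{q})\in[\lambda_{\min}(\sL_{T_1}^{q}),\lambda_{a}(\sL_{T_1}^{q})]$, and Remark \ref{rem:sgn_f^sL_T^i} then gives $f^{\sL_{T_1}^{q}}(q,\lambda_{a}(\sL_{T_2}^{q}))\leq 0$ if $n$ is even and $\geq 0$ if $n$ is odd. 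These weak and strict sign statements are incompatible, delivering the contradiction. The ``In particular'' clause then follows by chaining the just-proved covering inequality along saturated chains from $P_n$ to $T$ and from $T$ to $S_n$, which exist by Lemma \ref{lem:csikvari_min_max}.

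The only real departure from the $\lambda_{\max}$ argument, and the step I would flag as the main obstacle, is ensuring that $\lambda_{a}(\sL_{T_2}^{q})$ genuinely falls in the regime where both of our sign tools apply: strictly below $\lambda_{a}(\sL_{T_1}^{q})$ (from the contradiction hypothesis), nonzero (from strict positivity of $\lambda_{a}$, which holds uniformly in $q$ even when $|q|>1$ and $\sL_{T_2}^{q}$ has a negative eigenvalue), and above $\lambda_{\min}(\sL_{T_1}^{q})$ (from Lemma \ref{lem:min_T1<2nd_min_T2}). This lower bound is not automatic and is precisely what the common $(n-2)\times(n-2)$ structure of $\sL_{T_i}^{q}|\{1,k\}$ exploited in Remark \ref{rem:2nd_ev_L_T2} was designed to secure; without it the polynomial $f^{\sL_{T_1}^{q}}(q,\lambda_{a}(\sL_{T_2}^{q}))$ could have uncontrolled sign and the contradiction would collapse.
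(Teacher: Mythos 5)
Your proposal is correct and follows essentially the same route as the paper: the paper's proof also assumes $\lambda_{a}(\sL_{T_1}^{q})>\lambda_{a}(\sL_{T_2}^{q})$, uses Lemma \ref{lem:min_T1<2nd_min_T2} to place $\lambda_{a}(\sL_{T_2}^{q})$ in $\bigl[\max(0,\lambda_{\min}(\sL_{T_1}^{q})),\lambda_{a}(\sL_{T_1}^{q})\bigr]$, invokes Remark \ref{rem:sgn_f^sL_T^i} for the weak sign of $f^{\sL_{T_1}^{q}}$ there, identifies this with $\sD_{T_2}^{T_1}$ via \eqref{eqn:def_D_T1^T2}, and contradicts the strict sign from Lemma \ref{lem:sgn_D_T2^T1}, finishing with Lemma \ref{lem:csikvari_min_max}. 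Your identification of Lemma \ref{lem:min_T1<2nd_min_T2} as the decisive extra input beyond the $\lambda_{\max}$ argument matches the paper exactly.
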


\begin{proof}  
Assume to the contrary that  $\lambda_{a}(\sL_{T_1}^{q})> \lambda_{a}(\sL_{T_2}^{q})>0$.  
From Lemma \ref{lem:min_T1<2nd_min_T2}, $\lambda_{a}(\sL_{T_1}^{q}) > 
\lambda_{a}(\sL_{T_2}^{q}) \geq  \max \left(0,\lambda_{\min}(\sL_{T_1}^{q})\right)$.
By Remark \ref{rem:sgn_f^sL_T^i}, 
$f^{\sL_{T_1}^q}(q,\lambda_{a}(\sL_{T_2}^{q}))$
is non-positive when $n$ is even and non-negative when $n$ is  odd. 
Thus, from \eqref{eqn:def_D_T1^T2}, 
$\sD_{T_2}^{T_1}(q,\lambda_{a}(\sL_{T_2}^{q}))
=f^{\sL_{T_1}^q}(q,\lambda_{a}(\sL_{T_2}^{q}))$
is non-positive when $n$ is even and non-negative when $n$ is  odd. 
This contradicts Lemma \ref{lem:sgn_D_T2^T1}.   
By Lemma \ref{lem:csikvari_min_max}, the proof is complete.
\end{proof}

\vspace{2mm}

From Example \ref{eg:ev_S_n}, when $n>2$ we get   $\lambda_{a}(\sL_{S_n}^q)=1$. 
Thus, we obtain the following corollary.

\begin{corollary}
\label{cor:2nd_smal_ev_bound}
Let $T$ be a tree on $n>2$ vertices with $q$-Laplacian $\sL_T^q$.  
Then, for all $q\in \RR$, we have  $\lambda_{a}(\sL_T^q)\leq 1.$
\end{corollary}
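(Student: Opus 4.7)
The plan is to combine the explicit computation in Example \ref{eg:ev_S_n} with the monotonicity result of Theorem \ref{thm:2nd_ev_main}, since $S_n$ sits at the top of the poset $\GTS_n$ by Lemma \ref{lem:csikvari_min_max}. Concretely, I would first identify $\lambda_{a}(\sL_{S_n}^q)$ exactly, and then transport the bound from $S_n$ to an arbitrary tree $T$.

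First, I would handle the trivial case $q=0$ separately: $\sL_T^q = I$ gives all eigenvalues equal to $1$, so the inequality holds. For $q \in \RR \setminus \{0\}$, I would read off from Example \ref{eg:ev_S_n} that the spectrum of $\sL_{S_n}^q$ consists of the value $1$ with multiplicity $n-2$ together with the two roots
\[
\lambda_\pm \;=\; \frac{2+(n-2)q^2 \pm \sqrt{n^2 q^4 + 4(n-1)(1-q^2)q^2}}{2}
\]
of the quadratic $g(x) = x^2 - (2+(n-2)q^2)\,x + (1-q^2)$. Since $n>2$, the value $1$ actually appears at least once in the spectrum.

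The next step is to locate $1$ relative to $\lambda_-$ and $\lambda_+$. A quick evaluation gives
\[
g(1) \;=\; 1 - (2+(n-2)q^2) + (1-q^2) \;=\; -(n-1)q^2 \;<\; 0
\]
for $q \neq 0$. Hence $\lambda_- < 1 < \lambda_+$, which immediately forces $\lambda_{\min}(\sL_{S_n}^q) = \lambda_-$ and $\lambda_{a}(\sL_{S_n}^q) = 1$ (the second smallest eigenvalue is the smallest copy of $1$).

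Finally, I would invoke Theorem \ref{thm:2nd_ev_main}: for any tree $T$ on $n$ vertices we have $\lambda_{a}(\sL_T^q) \leq \lambda_{a}(\sL_{S_n}^q) = 1$, using Lemma \ref{lem:csikvari_min_max} to reach $S_n$ from $T$ by a chain of covering relations in $\GTS_n$. There is no real obstacle here; the only subtle step is the verification $g(1) < 0$, which pins down $1$ as the second smallest eigenvalue of $\sL_{S_n}^q$ rather than some other quantity, and this uses only the hypothesis $n>2$ (ensuring the value $1$ is genuinely present in the multiset of eigenvalues).
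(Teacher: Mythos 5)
Your proposal is correct and follows the same route as the paper: both deduce the bound from Theorem \ref{thm:2nd_ev_main} together with the computation of the spectrum of $\sL_{S_n}^q$ in Example \ref{eg:ev_S_n}. Your verification that $g(1)=-(n-1)q^2<0$ (so that $1$ sits strictly between the two quadratic roots and is therefore the second smallest eigenvalue when $n>2$) is a detail the paper leaves implicit, and your separate treatment of $q=0$ matches the paper's earlier observation that $\sL_T^0=I$.
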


The following example illustrates Theorem \ref{thm:main_result}.

\begin{example}
Let $T_1$ and $T_2$ be the two trees given in Figure \ref{fig:min_max_eig_GTS_6}. 
Clearly $T_2$ covers $T_1$ in $\GTS_6$. 
Let $\sL_{T_1}^q$ and $\sL_{T_2}^q$ be the $q$-Laplacians of $T_1$ and $T_2$ respectively. 
When $q \in \{0.1, 0.5, 1.0, 1.5,10\}$, 
the largest,  
the smallest 
and the second smallest eigenvalues 
of $\sL_{T_i}^q$  are given 
in Table  \ref{tab:min_max_eig_GTS_6},  where $i=1$, $2$. 
Calculations were done by using the computer package SageMath.
\begin{figure}[h]
\centering
\includegraphics[scale=0.8]{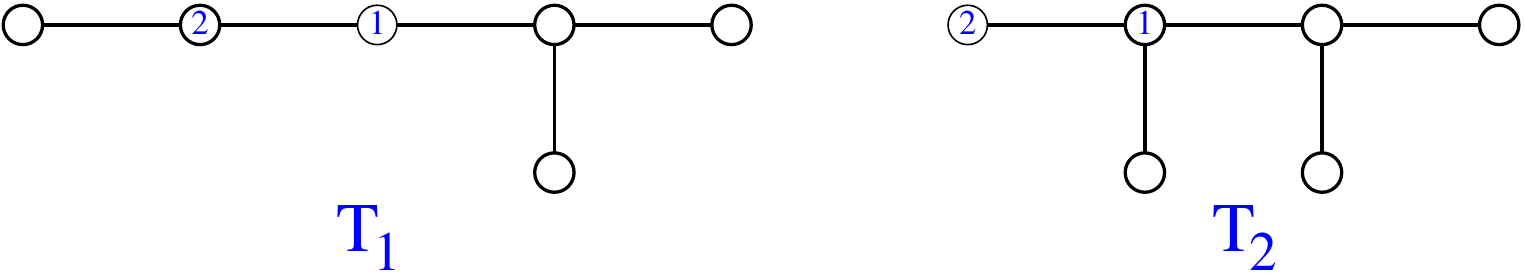}
\caption{An example of $\GTS_6$}
\label{fig:min_max_eig_GTS_6}
\end{figure} 
\begin{table}[h]
$$  \begin{array}{|r||r|r||r|r||r|r|}
\hline
& \multicolumn{2}{c||}{\lambda_{\max}(\sL_{T_i}^q)} &
\multicolumn{2}{c||}{\lambda_{\min}(\sL_{T_i}^q)} &
\multicolumn{2}{c|}{\lambda_{a}(\sL_{T_i}^q)} \\
\hline
q & \lambda_{\max}(\sL_{T_1}^q) & 
\lambda_{\max}(\sL_{T_2}^q) & 
\lambda_{\min}(\sL_{T_1}^q) & \lambda_{\min}(\sL_{T_2}^q) &
\lambda_{a}(\sL_{T_1}^q) & \lambda_{a}(\sL_{T_2}^q) \\
    \hline
    \hline
0.1 & 1.2017  & 1.2136  & 0.8208  & 0.8130  & 0.8890 &  0.9064  \\
 \hline
0.5 & 2.2566  & 2.3660  & 0.3032  & 0.2929  & 0.5586  & 0.6340   \\    
 \hline
1.0 & 4.2143  & 4.5616  &  0.0000 & 0.0000  & 0.3249  & 0.4384   \\
 \hline
1.5 & 6.9857  & 7.6742  & -0.0864  & -0.0981  & 0.2014   & 0.3258   \\
 \hline
10.0 & 202.9803  & 211.9481  & -0.0069  &  -0.0469 & 0.0070  & 0.0519   \\
 \hline
  \end{array}$$
\caption{The eigenvalues $\lambda_{max}(\sL_{T_i}^q)$, 
$\lambda_{min}(\sL_{T_i}^q)$ and  
$\lambda_{a}(\sL_{T_i}^q)$.}
\label{tab:min_max_eig_GTS_6}  
\end{table}
\end{example}

\section{Eigenvalues of the $q,t$-Laplacian}
\label{sec:ev_L^qt}

Let $T$ be a tree with $n$ vertices. 
A generalization $\sL_T^{q,t}$ was defined by Bapat and Sivasubramanian 
in \cite{bapat-siva-prod_dist_matrix}.  
Orient the edges of $T$ arbitrarily.  Since $T$ is now directed, we use
directed graph terminology for its arcs $(i,j)$.   
Define $\sL_{T}^{q,t} = (\ell_{i,j})_{1 \leq i,j \leq n}$ by $\ell_{i,j} = -q$, if 
$\{i,j\}$ is an edge $e$ in $T$ and the orientation of $e$ gives the 
arc $(i,j)$.  If the orientation of $e=\{i,j\}$ gives the arc $(i,j)$, then, 
define $\ell_{j,i} = -t$. If $\{i,j\}=e$ with $i\neq j$ is not an edge in $T$, 
then, define $\ell_{i,j}=0$. Define $\ell_{i,i} = 1 + qt(d_i -1)$, 
where $d_i$ is the degree of the $i$-th vertex in $T$.
It is easy to see that when $q=t$, $\sL_{T}^{q,t} = \sL_T^q$. 
Thus, $\sL_{T}^{q,t}$ is a generalization of $\sL_{T}^{q}$.  

Let $T$ be a tree on $n$ vertices with $q,t$-Laplacian $\sL_{T}^{q,t}$. 
For fixed but arbitrary $q,t\in \CC$, 
let $f^{\sL_{T}^{q,t}}(q,t,x)$ be the characteristic polynomial of $\sL_{T}^{q,t}$ in three variables $q$, $t$ and $x$. 
Clearly 
Lemma \ref{lem:gen_lemma} and Theorem \ref{thm:result_by_gen_lemma} go through for  
$f^{\sL_{T}^{q,t}}(q,t,x)$ when $q\neq 0$ and $t \neq 0$. 
In this case, it is easy to see that $y_1=0$, $y_2=1$, $y_3=-(x-1+qt)$ and $y_4=-1/qtx$.

We need the Interlacing Theorem for eigenvalues of Hermitian  matrices to 
generalize Lemmas \ref{lem1:F_T_q|<1} and \ref{lem2:F_T_q|>1}  to $\sL_{T}^{q,t}$. 
Hence, we require $\sL_{T}^{q,t}$ to be Hermitian. 
We see that the matrix 
$\sL_{T}^{q,t}$ is Hermitian for all $q,t\in \CC$ with $\obr{q}=t$. 
When $\obr{q}=t$, the proof of the $q,t$-version of the general lemma is 
identical to that of Lemma \ref{lem:gen_lemma}.  
Bapat and Sivasubramanian \cite{bapat-siva-prod_dist_matrix}  proved that  
$\det(\sL_{T}^{q,t})=1-qt.$ For $v\in T$, 
Nagar and Sivasubramanian \cite{mukesh-siva-immanantal_polynomial}  proved that  
$f^{\sL_{T}^{q,t}|v}(q,t,0)=(-1)^{n-1}$ for all $q,t \in \CC$. 
When $ \obr{q}=t$ in $\CC\backslash\{0\}$, we get  $qt=|q|^2>0$ and thus, 
Lemmas \ref{lem1:F_T_q|<1} and \ref{lem2:F_T_q|>1} go through for $\sL_{T}^{q,t}$.

When $\obr{q}= t \in \CC$, Theorem  \ref{thm:main_result} also goes through for 
the bivariate Laplacian matrix $\sL_{T}^{q,t}$.  
Let $\lambda_{\min}(\sL_{T}^{q,t})$, 
$\lambda_{a}(\sL_{T}^{q,t})$ and $\lambda_{\max}(\sL_{T}^{q,t})$ be the smallest, 
the second smallest and the largest eigenvalues of 
 $\sL_{T}^{q,t}$ 
 respectively.  
We get the following result as a generalization of Theorem \ref{thm:main_result}. 
As its proof is very similar to the proofs of    
Theorems \ref{thm:ev_max_main}, \ref{thm:min_ev_q<1}, \ref{thm:min_ev_q>1} and \ref{thm:2nd_ev_main}, 
we omit  and  merely state the result.
\begin{theorem}
\label{thm:q_t-Laplacian_result}
Let $T_1$ and $T_2$ be two trees on $n$ vertices such that $T_2$ covers $T_1$  in $\GTS_n$. 
Let $\sL_{T_1}^{q,t}$ and $\sL_{T_2}^{q,t}$ 
be the bivariate Laplacians of $T_1$ and $T_2$ respectively. 
Then, for  $q,t\in \CC$ with $\obr{q}=t$, we have 
$\lambda_{\min}(\sL_{T_1}^{q,t})\geq \lambda_{\min}(\sL_{T_2}^{q,t})$,  
$\lambda_{\max}(\sL_{T_1}^{q,t})\leq \lambda_{\max}(\sL_{T_2}^{q,t})$ and 
$\lambda_{a}(\sL_{T_1}^{q,t})\leq \lambda_{a}(\sL_{T_2}^{q,t}).$  
\end{theorem}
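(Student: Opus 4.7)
The plan is to reprove the entire pipeline of Sections \ref{sec:gen_lemma}--\ref{sec:eigenvalue_q_Lap}, with $q^2$ systematically replaced by $qt$ throughout, exploiting the fact that when $\obr{q}=t \in \CC\setminus\{0\}$ one has $qt = |q|^2 \in (0,\infty)$ and the matrix $\sL_T^{q,t}$ is Hermitian (hence has real spectrum). First I would verify the bivariate version of Lemma \ref{lem:condition_gen_lemma}: the proof there only uses that the diagonal entry at the contracted vertex equals $1 + q^2(d_{v_1} + d_{v_2} - 1)$ and that pairs of off-diagonal entries sharing an edge multiply to $q^2$. In the bivariate case the diagonal becomes $1 + qt(d_{v_1} + d_{v_2} - 1)$ and the symmetric off-diagonal pair $(-q)(-t)$ multiplies to $qt$, so the cofactor expansion yields the same identity with $q^2$ replaced by $qt$. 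This gives $y_1 = 0$, $y_2 = 1$, $y_3 = -(x - 1 + qt)$, and, since $qt \ne 0$, $y_4 = -1/(qtx)$, establishing the bivariate analog of Theorem \ref{thm:result_by_gen_lemma}.

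Next I would transfer the spectral preliminaries. The Bapat--Sivasubramanian formula $\det(\sL_T^{q,t}) = 1 - qt$ together with $qt = |q|^2$ yields the Hermitian analog of Lemma \ref{lem:bapat-lal-pati}: $\sL_T^{q,t}$ is positive definite for $|q| < 1$ and has exactly one negative eigenvalue for $|q| > 1$. Since the rank-one update that appears when deleting a leaf is of the form $zz^*$ with $z = [0,\ldots,0,-t,1]^t$ (positive semidefinite), the interlacing Lemmas \ref{lem:q_interlacing}, \ref{lem:q_partial_interlacing} carry over verbatim via the Hermitian Weyl inequalities, and hence so do Corollaries \ref{cor:subtree_ev} and \ref{cor:ev_n-1_subtrees}. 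For the simplicity of the smallest eigenvalue (the analog of Lemma \ref{lem:mult_small_ev}) in the range $|q|<1$, I would appeal to the relation $\ED_T^{q,t} = (1-qt)\,(\sL_T^{q,t})^{-1}$ from \cite{bapat-siva-prod_dist_matrix} together with a Hermitian Perron-type argument applied to $\ED_T^{q,t}$.

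Then I would set up the bivariate auxiliary polynomial
\[
\sF_T^v(q,t,x) = f^{\sL_T^{q,t}}(q,t,x) - (x - 1 + qt)\, f^{\sL_T^{q,t}|v}(q,t,x).
\]
The leading-coefficient computation in Lemma \ref{lem:deg_aux_poly} gives $-qt\cdot d_v$ (real and negative since $qt = |q|^2 > 0$), and Nagar--Sivasubramanian's evaluation $f^{\sL_T^{q,t}|v}(q,t,0) = (-1)^{n-1}$ combined with $\det(\sL_T^{q,t}) = 1-qt$ shows $\sF_T^v(q,t,0) = 0$. The location-of-non-zero-roots Lemmas \ref{lem1:F_T_q|<1} and \ref{lem2:F_T_q|>1} then transfer because their proofs use only the Cauchy interlacing of Hermitian matrices, the intermediate value theorem on the real variable $x$, and the trace inequality $\lambda_{\max}(\sL_T^{q,t}) \ge 1 - qt$, all of which remain valid. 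Consequently the sign analysis of
\[
\sD_{T_2}^{T_1}(q,t,x) = -\frac{1}{qtx}\,\sF_{P_k}^{1}(q,t,x)\,\sF_{H_1}^{1}(q,t,x)\,\sF_{H_2}^{1}(q,t,x)
\]
encoded in Lemmas \ref{lem:sgn_D_T2^T1} and \ref{lem:max_ev_with_D_T1T2} goes through unchanged (the parity-based argument depends only on the real sign $qt > 0$).

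With these ingredients in place, the three monotonicity statements are obtained by the same three contradiction arguments used in Theorems \ref{thm:ev_max_main}, \ref{thm:min_ev_q<1}, \ref{thm:min_ev_q>1} and \ref{thm:2nd_ev_main}: substitute the putative violating eigenvalue into $\sD_{T_2}^{T_1}(q,t,\cdot)$, use Remark \ref{rem:sgn_f^sL_T^i} to determine the sign of $f^{\sL_{T_i}^{q,t}}$ there, and contrast with the sign forced by the product formula for $\sD_{T_2}^{T_1}$. The main obstacle is the bookkeeping check that \emph{every} scalar fact about $q^2$ used in the $q$-case (positivity, reality, trace inequality, leading-coefficient sign) is actually a fact about $qt$ in the Hermitian case; the identity $qt = |q|^2 \in \RR_{\ge 0}$ is what makes this replacement safe, and the Perron-type argument for simplicity of $\lambda_{\min}$ via $\ED_T^{q,t}$ for non-real $q$ is the only place where a genuinely new (but still standard) Hermitian argument is needed.
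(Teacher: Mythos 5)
Your proposal is correct and follows essentially the same route as the paper, which likewise obtains Theorem \ref{thm:q_t-Laplacian_result} by rerunning the entire $q$-pipeline with $q^2$ replaced by $qt=|q|^2>0$ and invoking Hermitian interlacing, $\det(\sL_T^{q,t})=1-qt$, and $f^{\sL_T^{q,t}|v}(q,t,0)=(-1)^{n-1}$. You are in fact more careful than the paper in flagging the one genuine subtlety (the Perron-type simplicity of $\lambda_{\min}$ via $\ED_T^{q,t}$, whose entries are complex for non-real $q$), which the paper passes over in silence; one clean way to close it is to note that for $\obr{q}=t$ the matrix $\sL_T^{q,t}$ is unitarily diagonally similar to $\sL_T^{|q|}$, reducing everything to the real case.
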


Thus, for all $\obr{q}=t\in \CC$, three eigenvalues of $\sL_{T}^{q,t}$ exhibit  monotonicity 
when we go up on $\GTS_n$. 
Moreover, for the largest and second smallest eigenvalues  max-min pair is $(S_n,P_n)$ 
while for the smallest eigenvalue max-min pair is $(P_n,S_n)$. 
It is easy to determine the eigenvalues of $\sL_{S_n}^{q,t}$ 
as  done in Example \ref{eg:ev_S_n}.
Thus, we get the following corollary of Theorem  \ref{thm:q_t-Laplacian_result}.

\begin{corollary}
Let $T$ be a tree on $n>2$ vertices with  $q,t$-Laplacian $\sL_T^{q,t}$. 
Then, for all $q,t\in \CC$ with $\obr{q}=t$, we have $\lambda_{a}(\sL_{T_1}^{q,t})\leq 1$ and 
$$\lambda_{\max}(\sL_T^{q,t}) \leq \frac{2+(n-2)qt+ \sqrt{n^2q^2t^2+4(n-1)(1-qt)qt}}{2}.$$ 
\end{corollary}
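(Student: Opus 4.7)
The plan is to reduce the statement to the special case $T = S_n$ by invoking Theorem \ref{thm:q_t-Laplacian_result}, and then to compute $\lambda_{\max}(\sL_{S_n}^{q,t})$ and $\lambda_{a}(\sL_{S_n}^{q,t})$ explicitly by diagonalizing the star's bivariate Laplacian. By Lemma \ref{lem:csikvari_min_max}, $S_n$ is the unique maximum of $\GTS_n$, so Theorem \ref{thm:q_t-Laplacian_result} gives for every tree $T$ on $n$ vertices the inequalities $\lambda_{\max}(\sL_T^{q,t}) \le \lambda_{\max}(\sL_{S_n}^{q,t})$ and $\lambda_{a}(\sL_T^{q,t}) \le \lambda_{a}(\sL_{S_n}^{q,t})$ whenever $\obr{q} = t$. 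Thus the only work left is to identify these two eigenvalues of $\sL_{S_n}^{q,t}$.

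Next I would compute the characteristic polynomial of $\sL_{S_n}^{q,t}$ in direct analogy with Example \ref{eg:ev_S_n}. Taking the center of the star to be vertex $1$, the only permutations in $\SSS_n$ that give a nonzero contribution to $\det(xI - \sL_{S_n}^{q,t})$ are the identity and the $n-1$ transpositions $(1,j)$. The identity contributes $(x-1)^{n-1}\bigl(x - 1 - (n-2)qt\bigr)$, and each transposition contributes $-qt\,(x-1)^{n-2}$ because $\ell_{1,j}\ell_{j,1} = (-q)(-t) = qt$ regardless of the chosen orientation. Collecting terms gives
\[
f^{\sL_{S_n}^{q,t}}(q,t,x) = (x-1)^{n-2}\bigl[x^2 - (2+(n-2)qt)\,x + (1-qt)\bigr].
\]
The eigenvalues of $\sL_{S_n}^{q,t}$ are therefore $1$ with multiplicity $n-2$, together with the two roots of the quadratic factor. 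A routine simplification of the discriminant using $(n-2)^2 + 4(n-1) = n^2$ writes these two roots as $\dfrac{2+(n-2)qt \pm \sqrt{n^2 q^2 t^2 + 4(n-1)(1-qt)qt}}{2}$.

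Finally I would identify which of these three values are $\lambda_{\max}$ and $\lambda_a$. When $\obr{q} = t$ we have $qt = |q|^2 \geq 0$, so the quadratic, evaluated at $x=1$, equals $-(n-1)|q|^2 \le 0$, while for large $x$ it is positive. Hence its two real roots straddle $1$, so the larger root is $\ge 1$ and gives $\lambda_{\max}(\sL_{S_n}^{q,t})$, matching the displayed bound exactly; the smaller root is $\le 1$, and since $n>2$ there is at least one eigenvalue equal to $1$, which must be the second smallest. Thus $\lambda_{a}(\sL_{S_n}^{q,t}) = 1$, yielding the two conclusions of the corollary.

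The only nonroutine step is verifying that the straddle analysis at $x=1$ really pins down the ordering of the three distinct eigenvalue types; this is the mild obstacle, and it is handled by the sign computation of the quadratic at $x=1$ together with the observation that $qt = |q|^2$ is real and nonnegative under the hypothesis $\obr{q} = t$.
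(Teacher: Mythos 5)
Your proposal is correct and follows essentially the same route as the paper: reduce to the star $S_n$ via Theorem \ref{thm:q_t-Laplacian_result} and Lemma \ref{lem:csikvari_min_max}, then compute the characteristic polynomial of $\sL_{S_n}^{q,t}$ exactly as in Example \ref{eg:ev_S_n} with $q^2$ replaced by $qt=|q|^2$. Your explicit sign check of the quadratic at $x=1$ to confirm that the two nontrivial roots straddle $1$ (so that $\lambda_a(\sL_{S_n}^{q,t})=1$ and the larger root is $\lambda_{\max}$) is a detail the paper leaves implicit, and it is handled correctly.
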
 

When $q=1/t\in \CC$ with $t\neq 0$ and $\obr{q}\neq t $, 
the matrix $\sL_{T}^{q,t}$ is no longer Hermitian 
but our result follows from   
Nagar and Sivasubramanian \cite[Remark 34]{mukesh-siva-immanantal_polynomial}. 
There it was proved that when $q=1/t\in \CC$ with $t\neq 0$, $\det(xI-\sL_{T}^{q,t})=\det(xI-L_T)$,  
where $L_T$ is the Laplacian matrix of $T$. 
Thus, we have  $\lambda_{\min}(\sL_{T}^{q,t})=\lambda_{\min} (L_{T})$,  
$\lambda_{\max}(\sL_{T}^{q,t})=\lambda_{\max} (L_{T})$ and 
$\lambda_{a}(\sL_{T}^{q,t})=\lambda_{a} (L_{T})$.
Moreover, we have $\lambda_{\max}(\sL_{T}^{q,t})=\lambda_{\max} (L_{T}) \leq n$ 
when  $q=1/t\in \CC$ with $t \neq 0$. 

One special case of $\sL_{T}^{q,t}$ is 
obtained when we set $q=\imath$ and $t=-\imath$, where $\imath = \sqrt{-1}$. 
In this case, the matrix $\sL_{T}^{q,t}$ is the Hermitian 
positive semidefinite 
(see Bapat and Sivasubramanian \cite{bapat-siva-prod_dist_matrix}).
The bivariate Laplacian matrix $\sL_{T}^{q,t}$
is called the Hermitian Laplacian matrix of $T$ when $q=\imath$ and 
$t=-\imath$ defined by Yu and Qu \cite{yu-qu-hermitian-Laplacian}. 
In this case, we have both $\obr{q}=t$ and $q=1/t$.

\section{Eigenvalue monotonicity of $\ED_T$} 
\label{sec:expon_dist_mat}

Let $T$ be a tree on $n$ vertices with exponential distance matrix $\ED_T$. 
Let the eigenvalues of $\ED_T$ be  
$\lambda_{\max}(\ED_T)=\lambda_1(\ED_T)\geq \lambda_2(\ED_T) \geq 
\cdots \geq \lambda_n(\ED_T)=\lambda_{\min}(\ED_T)$. 
When $q=\pm 1$, it is simple to see that the eigenvalues of  $\ED_T$ are $n$ and 
$0$ with multiplicities $1$ and $n-1$ respectively. 
Thus, in this case, the eigenvalues of $\ED_T$ are constant on $\GTS_n$. 
By Lemmas \ref{lem:bapat-lal-pati} and \ref{lem:bapat_ED_ev}, 
we see that $\ED_T$ is a positive definite matrix when $q\in \RR$ with $|q|<1$ 
and $\ED_T$ has exactly one positive eigenvalue when $q\in \RR$ with $|q|>1$.
Let $\lambda_{\min}(\ED_T)$, $\lambda_{\max}(\ED_T)$ and 
$\lambda_{2}(\ED_T)$  be the smallest, the  largest and 
the second largest eigenvalues of $\ED_T$.  
The following theorem is our main result of this section.
\begin{theorem}
\label{thm:main_result_E}
Let $T_1$ and $T_2$ be two trees with $n$ vertices such that $T_2$ covers $T_1$ in $\GTS_n$. 
\begin{enumerate}
\item If $q\in \RR$ with $|q|<1$, then,   $\lambda_{\min}(\ED_{T_1})\geq \lambda_{\min}(\ED_{T_2})$, 
$\lambda_{2}(\ED_{T_1})\geq \lambda_{2}(\ED_{T_2})$ and 
$\lambda_{\max}(\ED_{T_1})\leq \lambda_{\max}(\ED_{T_2})
.$ 
\item If $q\in \RR$ with $|q|>1$, then,    $\lambda_{\min}(\ED_{T_1})\leq \lambda_{\min}(\ED_{T_2})$, 
$\lambda_{2}(\ED_{T_1})\leq \lambda_{2}(\ED_{T_2})$ and  
$\lambda_{\max}(\ED_{T_1})\geq \lambda_{\max}(\ED_{T_2})
.$ 
\end{enumerate}
\end{theorem}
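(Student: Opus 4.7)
The plan is to reduce Theorem \ref{thm:main_result_E} directly to Theorem \ref{thm:main_result} by way of Lemma \ref{lem:bapat_ED_ev}, which gives $\ED_T^{-1}=\frac{1}{1-q^2}\sL_T^q$ when $q\neq \pm 1$. Thus the eigenvalues of $\ED_T$ are precisely the numbers $\frac{1-q^2}{\lambda_i(\sL_T^q)}$, and the whole argument is about identifying which eigenvalue of $\sL_T^q$ corresponds to each of $\lambda_{\min}(\ED_T)$, $\lambda_2(\ED_T)$, $\lambda_{\max}(\ED_T)$, and then carefully tracking signs.

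First I would handle the case $|q|<1$. Here Lemma \ref{lem:bapat-lal-pati} tells us $\sL_T^q$ is positive definite, so all $\lambda_i(\sL_T^q)>0$, and $1-q^2>0$. Consequently the map $\lambda\mapsto (1-q^2)/\lambda$ is strictly decreasing on $(0,\infty)$, which gives the identifications
\[
\lambda_{\max}(\ED_T)=\frac{1-q^2}{\lambda_{\min}(\sL_T^q)},\quad \lambda_2(\ED_T)=\frac{1-q^2}{\lambda_a(\sL_T^q)},\quad \lambda_{\min}(\ED_T)=\frac{1-q^2}{\lambda_{\max}(\sL_T^q)}.
\]
Applying the three inequalities of Theorem \ref{thm:main_result} and using that $\lambda\mapsto (1-q^2)/\lambda$ reverses order on positive reals, all three inequalities of part (1) fall out immediately.

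Next I would handle $|q|>1$. Now $1-q^2<0$ and by Lemma \ref{lem:bapat-lal-pati} the matrix $\sL_T^q$ has exactly one negative eigenvalue, namely $\lambda_{\min}(\sL_T^q)$, while the other $n-1$ eigenvalues are positive. Dividing $1-q^2$ (negative) by each produces exactly one positive eigenvalue of $\ED_T$ (coming from the negative $\lambda_{\min}(\sL_T^q)$) and $n-1$ negative ones. A short case analysis then identifies
\[
\lambda_{\max}(\ED_T)=\frac{1-q^2}{\lambda_{\min}(\sL_T^q)},\quad \lambda_2(\ED_T)=\frac{1-q^2}{\lambda_{\max}(\sL_T^q)},\quad \lambda_{\min}(\ED_T)=\frac{1-q^2}{\lambda_a(\sL_T^q)},
\]
where the formula for $\lambda_{\min}(\ED_T)$ uses that the most negative reciprocal comes from the smallest positive eigenvalue $\lambda_a(\sL_T^q)$ (known to be strictly positive by Lemma \ref{lem:bapat-lal-pati}), and similarly $\lambda_2(\ED_T)$ is the least negative, coming from $\lambda_{\max}(\sL_T^q)$. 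Feeding Theorem \ref{thm:main_result} through these formulas, and remembering that multiplying by the negative quantity $1-q^2$ flips each inequality, yields all three inequalities of part (2).

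The step that requires the most care, and is really the only obstacle, is the identification of $\lambda_{\min}(\ED_T)$ and $\lambda_2(\ED_T)$ in the regime $|q|>1$: one has to combine the sign pattern of the $\lambda_i(\sL_T^q)$ from Lemma \ref{lem:bapat-lal-pati} with the order-reversal of the reciprocal map on positive reals and its order-preserving behaviour on negative reals. Once the identifications are pinned down, each of the six inequalities is a one-line consequence of Theorem \ref{thm:main_result} and a sign comparison.
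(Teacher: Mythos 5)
Your proposal is correct and follows essentially the same route as the paper: both reduce the statement to Theorem \ref{thm:main_result} via the inverse relation of Lemma \ref{lem:bapat_ED_ev}, using the sign information from Lemma \ref{lem:bapat-lal-pati} to identify $\lambda_{\max}(\ED_T)$, $\lambda_2(\ED_T)$ and $\lambda_{\min}(\ED_T)$ with $\frac{1-q^2}{\lambda_{\min}(\sL_T^q)}$, $\frac{1-q^2}{\lambda_a(\sL_T^q)}$, $\frac{1-q^2}{\lambda_{\max}(\sL_T^q)}$ when $|q|<1$ and with $\frac{1-q^2}{\lambda_{\min}(\sL_T^q)}$, $\frac{1-q^2}{\lambda_{\max}(\sL_T^q)}$, $\frac{1-q^2}{\lambda_a(\sL_T^q)}$ when $|q|>1$. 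The paper simply writes out the full ordered chains of eigenvalues of $\ED_{T_i}$ explicitly, which is exactly the identification you describe.
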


\begin{proof} 
 When $q\in \RR$ with $|q|<1$, by using Lemma \ref{lem:bapat_ED_ev}, 
all the eigenvalues of $\ED_{T_1}$ and $\ED_{T_2}$ are positive and they are 
\begin{eqnarray}
\lambda_{\max}(\ED_{T_1})=\frac{1-q^2}{\lambda_{\min}(\sL_{T_1}^{q})} & \geq & 
\frac{1-q^2}{\lambda_{a}(\sL_{T_1}^{q})} 
\geq  \cdots 
\geq
\frac{1-q^2}{\lambda_{1}(\sL_{T_1}^{q})}
= \frac{1-q^2}{\lambda_{\max}(\sL_{T_1}^{q})}  \nonumber \\ 
 \mbox{ and }  \ \ \
\lambda_{\max}(\ED_{T_2})=\frac{1-q^2}{\lambda_{\min}(\sL_{T_2}^{q})} & \geq &
\frac{1-q^2}{\lambda_{a}(\sL_{T_2}^{q})} 
\geq  \cdots 
\geq
\frac{1-q^2}{\lambda_{1}(\sL_{T_2}^{q})}
= \frac{1-q^2}{\lambda_{\max}(\sL_{T_2}^{q})}
\label{eqn:ev_ED_T_q<1}
\end{eqnarray}
respectively. Similarly, when $|q|>1$, by Lemma \ref{lem:bapat_ED_ev}, 
both the matrices $\ED_{T_1}$ and $\ED_{T_2}$ have exactly one positive eigenvalue and  their eigenvalues are 
\begin{eqnarray}
\lambda_{\max}(\ED_{T_1})=\frac{1-q^2}{\lambda_{\min}(\sL_{T_1}^{q})} & \geq & 
\frac{1-q^2}{\lambda_{\max}(\sL_{T_1}^{q})} 
\geq  \cdots 
\geq
\frac{1-q^2}{\lambda_{n-1}(\sL_{T_1}^{q})}
= \frac{1-q^2}{\lambda_{a}(\sL_{T_1}^{q})} \nonumber \\
 \mbox{ and } \ \ \ 
\lambda_{\max}(\ED_{T_2})=\frac{1-q^2}{\lambda_{\min}(\sL_{T_2}^{q})} & \geq &
\frac{1-q^2}{\lambda_{\max}(\sL_{T_2}^{q})} 
\geq \cdots
\geq
\frac{1-q^2}{\lambda_{n-1}(\sL_{T_2}^{q})}
= \frac{1-q^2}{\lambda_{a}(\sL_{T_2}^{q})}
\label{eqn:ev_ED_T_q>1}
\end{eqnarray}
respectively. Thus, by using  \eqref{eqn:ev_ED_T_q<1}, \eqref{eqn:ev_ED_T_q>1} and 
Theorem \ref{thm:main_result}  the proof is complete.
\end{proof} 


Thus, for all $q\in \RR$, three eigenvalues of $\ED_T$ exhibit  monotonicity 
when we go up on $\GTS_n$ and max-min pair is either $(P_n,S_n)$ or $(S_n,P_n)$. 
 
\comment{
By using  Lemma \ref{lem:csikvari_min_max}, 
we get the following simple corollary of Theorem \ref{thm:main_result_E}. 
We omit the proof.
\begin{corollary}
\label{cor:2nd_ev_ED}
Let $T$ be any tree on $n$ vertices and let $q\in \RR$. 
\begin{enumerate}
\item  If $|q|<1$, then,  
$\lambda_{\min}(\ED_{P_n})\geq \lambda_{\min}(\ED_T) \geq \lambda_{\min}(\ED_{S_n})$, 
$\lambda_{a}(\ED_{P_n})\geq \lambda_{a}(\ED_T) \geq \lambda_{a}(\ED_{S_n})$ and  
$\lambda_{\max}(\ED_{P_n})\leq \lambda_{\max}(\ED_T) \leq \lambda_{\max}(\ED_{S_n}).$  
\item If $|q|>1$, then,  $\lambda_{\min}(\ED_{P_n})\leq \lambda_{\min}(\ED_T) \leq 
\lambda_{\min}(\ED_{S_n})$, $\lambda_{a}(\ED_{P_n})\leq \lambda_{a}(\ED_T) 
\leq \lambda_{a}(\ED_{S_n})$   and  $\lambda_{a}(\ED_{P_n})\geq \lambda_{a}(\ED_T) 
\geq \lambda_{a}(\ED_{S_n}).$ 
\end{enumerate}
\end{corollary}
}

\subsection{$q,t$-exponential distance matrix}

We consider the bivariate exponential distance matrix $\ED_T^{q,t}$ of a tree $T$.
Orient the tree $T$ as done in Section \ref{sec:ev_L^qt}.  Thus each 
directed arc $e$ of $E(T)$ has a unique reverse arc $e_{rev}$ and
we assign a variable weight $w(e) = q$ and $w(e_{rev}) = t$ or 
vice versa.  If the path
$P_{i,j}$ from vertex $i$ to vertex $j$ has the sequence of 
edges $P_{i,j} = (e_1,e_2, \ldots, e_p)$,  assign it weight
$w_{i,j} = \prod_{e_k \in P_{i,j}} w(e_k)$.  
Define $w_{i,i}=1$ for $i=1,2,\ldots,n$. 
Define the bivariate $q,t$-exponential distance matrix 
$\ED_T^{q,t} = (w_{i,j})_{1 \leq i,j \leq n}$. 
Clearly, when $q=t$, we will have $\ED_T^{q,t} = \ED_T$.
Bapat and Sivasubramanian in 
\cite[Theorem 3.2]{bapat-siva-prod_dist_matrix} 
showed the following bivariate counterpart of Lemma \ref{lem:bapat_ED_ev}.

\begin{lemma}[Bapat and Sivasubramanian]
\label{lem:bapat_E_biv_inverse}
Let $T$ be a tree on $n$ vertices with $q,t$-Laplacian  $\sL_{T}^{q,t}$ 
and $q,t$-exponential distance matrix $\ED_T^{q,t}$. 
When $qt \neq 1$, then   
$(\ED_T^{q,t})^{-1}=\frac{1}{1-qt} \sL_{T}^{q,t}.$
Moreover,  $\frac{1-qt}{\lambda_i(\sL_{T}^{q,t})}$ is an eigenvalue of $\ED_T^{q,t}$, 
where $\lambda_i(\sL_{T}^{q,t})$ is an eigenvalue of $\sL_{T}^{q,t}$.
\end{lemma}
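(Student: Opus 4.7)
The plan is to establish the matrix identity $\sL_T^{q,t}\,\ED_T^{q,t} = (1-qt)\,I$ by a direct entry-wise computation that exploits the tree structure, and then to read off the inverse formula and the eigenvalue correspondence as routine consequences. Fix an arbitrary orientation of $E(T)$; the key observation used repeatedly is that for any neighbor $k$ of a vertex $i$, exactly one of the arcs $(i,k),(k,i)$ has weight $q$ and the other has weight $t$, so the product
\[
(\sL_T^{q,t})_{i,k}\cdot w_{k,i} \;=\; (-q)(t) \;=\; (-t)(q) \;=\; -qt
\]
independently of the chosen orientation.

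For the diagonal entry $(i,i)$ of $\sL_T^{q,t}\,\ED_T^{q,t}$, only $k=i$ and the $d_i$ neighbors of $i$ contribute, and one obtains
\[
(1+qt(d_i-1))\cdot 1 \;+\; d_i\cdot(-qt) \;=\; 1-qt.
\]
For the off-diagonal entry $(i,j)$ with $i\neq j$, let $k_0$ be the unique neighbor of $i$ on the path from $i$ to $j$ in $T$, and let $\ell\in\{q,t\}$ be the weight of the arc directed from $i$ to $k_0$; then $w_{k_0,j}=w_{i,j}/\ell$ and $(\sL_T^{q,t})_{i,k_0}=-\ell$, so this term contributes $-w_{i,j}$. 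For every other neighbor $k\neq k_0$ of $i$, the $k$-to-$j$ path backtracks through $i$, so $w_{k,j}=w(k\to i)\cdot w_{i,j}$ and the same $-qt$ cancellation as above gives a contribution of $-qt\,w_{i,j}$. Summing,
\[
\bigl(1+qt(d_i-1)\bigr)w_{i,j} \;-\; w_{i,j} \;-\; qt(d_i-1)\,w_{i,j} \;=\; 0.
\]

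This proves $\sL_T^{q,t}\,\ED_T^{q,t}=(1-qt)I$. Since $qt\neq 1$, the matrix $\ED_T^{q,t}$ is invertible with $(\ED_T^{q,t})^{-1}=\frac{1}{1-qt}\sL_T^{q,t}$, which also confirms $\det(\sL_T^{q,t})=1-qt$ is nonzero so every eigenvalue $\lambda_i(\sL_T^{q,t})$ is nonzero. The eigenvalues of $\frac{1}{1-qt}\sL_T^{q,t}$ are $\frac{\lambda_i(\sL_T^{q,t})}{1-qt}$, hence the eigenvalues of its inverse $\ED_T^{q,t}$ are the reciprocals $\frac{1-qt}{\lambda_i(\sL_T^{q,t})}$, as claimed.

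The only real obstacle is the orientation bookkeeping in the off-diagonal calculation: one must verify that the identity $w_{k_0,j}=w_{i,j}/\ell$ (rather than $w_{i,j}/t$ or some other combination) is unambiguous, and similarly that $w_{k,j}=w(k\to i)\cdot w_{i,j}$ for the backtracking neighbors. Both are immediate from the uniqueness of paths in a tree together with the multiplicative definition $w_{i,j}=\prod_{e\in P_{i,j}} w(e)$, so once the conventions are fixed the computation is mechanical and the lemma follows.
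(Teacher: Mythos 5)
Your proof is correct. Note, however, that the paper itself gives no proof of this lemma: it is imported verbatim as \cite[Theorem 3.2]{bapat-siva-prod_dist_matrix}, so there is nothing internal to compare against. What you have supplied is a self-contained direct verification of the identity $\sL_T^{q,t}\,\ED_T^{q,t}=(1-qt)I$, and the entry-wise bookkeeping is right: the orientation-independence of $(\sL_T^{q,t})_{i,k}\,w_{k,i}=-qt$ for neighbours $k$ of $i$, the special treatment of the unique neighbour $k_0$ on the $i$-to-$j$ path (contributing $-w_{i,j}$), and the backtracking neighbours (each contributing $-qt\,w_{i,j}$) combine exactly as you say, and both the diagonal value $1-qt$ and the off-diagonal cancellation check out. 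The passage from the matrix identity to the eigenvalue statement is also fine, since a right inverse of a square matrix is a two-sided inverse, so $\sL_T^{q,t}$ is invertible and the reciprocal relation between the spectra of a matrix and its inverse applies. One small inaccuracy in a side remark: the identity $\sL_T^{q,t}\,\ED_T^{q,t}=(1-qt)I$ yields $\det(\sL_T^{q,t})\det(\ED_T^{q,t})=(1-qt)^n$, which does not by itself ``confirm'' that $\det(\sL_T^{q,t})=1-qt$; that equality is a separate result of Bapat and Sivasubramanian. It is not needed for your argument --- invertibility of $\sL_T^{q,t}$ already guarantees that every $\lambda_i(\sL_T^{q,t})$ is nonzero --- so you should simply drop that clause.
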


From Theorem \ref{thm:q_t-Laplacian_result}, 
it is easy to see that Theorem \ref{thm:main_result_E}  goes through
for the bivariate $q,t$-exponential distance matrix $\ED_T^{q,t}$
when $q,t \in \CC$ with $\obr{q}=t$ and $qt \not= 1$.

\section*{Acknowledgement}
The author acknowledges support from DST, New Delhi for providing a Senior Research Fellowship. 
Our main theorem in this work was in its conjecture form, 
tested using the computer package ``SageMath''. 
We thank the authors for generously releasing SageMath as an open-source package.

\bibliographystyle{acm}
\bibliography{main}
\end{document}